\newtheorem{teo}{Theorem}[section] 
\newtheorem{kowski}[teo]{Algorithm}
\newtheorem{lemma}[teo]{Lemma} 
\newtheorem{prop}[teo]{Proposition}
\newtheorem{corollario}[teo]{Corollary}
\theoremstyle{definition} 
\newtheorem{definiz}[teo]{Definition} 
\newtheorem{example}[teo]{Example}
\newtheorem{oss}[teo]{Remark}
\newenvironment{proof1}{\noindent\textit{Proof}}{\hfill$\square$\medskip\\}
\theoremstyle{remark} 
\numberwithin{equation}{section}
\begin{document}
\title[Quaternionic Tori]{On Quaternionic Tori and their Moduli Space}
 \author[C. Bisi, G. Gentili]
{Cinzia Bisi $^\ast$, Graziano Gentili $^\ast$}
\date{\today}
\thanks{\rm $^\ast$ Both authors are supported by Progetto MIUR di
Rilevante Interesse Nazionale {\it ``Variet\`a reali e complesse: geometria, topologia e analisi armonica" } and  by GNSAGA of INdAM. The first author is also supported by Progetto FIRB {\it Geometria differenziale e teoria geometrica delle funzioni}}
\address{First author: Dipartimento di Matematica ed Informatica, Universit\'a di Ferrara,
Via Machiavelli n.35, Ferrara, 44121
Italy}
\vspace{1cm}

\email{bsicnz@unife.it}
\email{cinzia.bisi@unife.it}

\address{Second author: Dipartimento di Matematica e Informatica ``U. Dini", Universit\'a di
Firenze, Viale Morgagni 67/A, 50134 Firenze, Italy}
\vspace{1cm}

\email{gentili@math.unifi.it}

\subjclass[2010]{Primary: 30G35 Secondary: 32G15, 14K10}
\keywords{Regular functions over quaternions, Quaternonic tori and their moduli space}

\begin{abstract}
Quaternionic tori are defined as quotients of the skew field $\mathbb{H}$ of quaternions  by rank-4 lattices. Using  slice regular functions, these tori are endowed with natural structures of quaternionic manifolds (in fact quaternionic curves), and a fundamental region in a $12$-dimensional real subspace is  then constructed to classify them up to biregular diffeomorphisms. 
The points of the moduli space correspond to  suitable \emph{special} bases of rank-4 lattices, which are studied with respect to the action of the group $GL(4, \mathbb{Z})$, and up to biregular diffeomeorphisms. All tori with a non trivial group of biregular automorphisms - and all possible groups of their biregular automorphisms - are then identified, and recognized to correspond to five different subsets  of boundary points of the moduli space.
\end{abstract}

\maketitle
\section{Introduction}
A new notion of regularity for quaternion-valued functions of a quaternionic variable was introduced in 2006, in \cite{GS06,GS}. The newly defined class of (slice) regular functions has already proved to be interesting as a quaternionic counterpart of complex holomorphic functions.
In this quaternionic setting, a Casorati-Weierstrass Theorem was proved in \cite{stoppato} and it  allowed the study of the group $Aut(\mathbb{H})$ of all biregular transformations of the space of quaternions $\mathbb{H}$. This group turned out to coincide with  the group of all affine transformations of $\mathbb{H}$ of the form $q\mapsto qa+b$, with $a, b\in \mathbb{H}$ and $a\neq 0$. As we can see, notwithstanding the fact that the algebraic, abstract structure of the group $Aut(\mathbb{C}^2)$ of  biholomorphic transformations of $\mathbb{C}^2$ is still unknown, that of biregular transformations of $\mathbb{H}\cong \mathbb{C}^2$ inherits the simplicity of the group $Aut(\mathbb{C})$.

The fact that all quaternionic regular affine transformations of $\mathbb{H}$ form a group under composition, the group $Aut(\mathbb{H})$, permits the direct construction of a class of natural \emph{quaternionic manifolds} (actually \emph{quaternionic curves}): the quaternionic tori. These tori are studied in the present paper. Together with the quaternionic projective spaces, \cite{salamon}, and the Hopf surfaces, \cite{AB18}, they are among the few directly constructed quaternionic manifolds, and bear with them the genuine interest that accompanies any analog of elliptic complex Riemann surfaces.

In this paper we construct quaternionic tori, realized as quotients of $\mathbb{H}$ with respect to rank-$4$ lattices, and endow them  with  natural structures of quaternionic $1$-dimensional manifolds. We then use the basic features of quaternionic regular maps to characterize biregularly diffeomorphic tori  by means of properties of their generating lattices; this approach introduces into the scenery the group $GL(4, \mathbb{Z})$, that  plays a fundamental role in this context. In fact the use of classical results on the reduction of Gram matrices, based on the Minkowski-Siegel Reduction Algorithm, allows us to express  any ``normalized'' rank-$4$ lattice of $\mathbb{H}$ in terms of a generating \emph{special basis}. These special bases parameterize the classes of equivalence of biregular diffeomorphism of quaternionic tori, and are used to define a \emph{fundamental set} (see \eqref{esse}) for this equivalence relation, as the subset of $\mathbb{H}^3\cong \mathbb{R}^{12}$
\[
\mathcal{M}=\{ (v_2,v_3,v_4) \in \mathbb{H}^3: \{1, v_2,v_3,v_4 \} \hbox{ is a special basis} \}.
\]
We will not define a special basis here in the Introduction (see Definition \ref{special basis}), but we want to say that special bases have properties that urge a comparison with the complex case of elliptic curves, like the following:
if $\{1, v_2,v_3,v_4 \}$ is a special basis of a rank-$4$ lattice, i.e., if $( v_2,v_3,v_4)\in \mathcal{M}$, then, in particular
\begin{enumerate}
\item $1\leq \langle v_2, v_2\rangle \leq  \langle v_3, v_3\rangle \leq  \langle v_4, v_4\rangle$;
\item $-\frac{1}{2} \leq Re (v_k) \leq \frac{1}{2}$, for all $k=2,3,4$;
\item $-\frac{1}{2} \langle v_l, v_l\rangle \leq  \langle v_k, v_l\rangle \leq \frac{1}{2} \langle v_l, v_l\rangle$, for all $(k,l)\in \{2,3,4\} \times \{2,3,4\}$ such that $l\neq k$.
\end{enumerate}
The fundamental set $\mathcal M\subset\mathbb H^3$ of the equivalence relation of biregular diffeomorphism among quaternionic tori has some boundary points that are equivalent. In fact there are different elements belonging to the boundary $\partial \mathcal{M}$ of the fundamental set,  that correspond to the same torus; as an example we can take the distinct points $(i, j, k)$ and $(j, i, k)$: the two special bases $\{1, i, j, k\}$ and $\{1, j, i, k\}$ generate the same lattice (the ring of Lipschitz quaternions) and hence the same torus.
 However, in \eqref{tau} we define  the  proper subset $\mathcal{T}$ of the fundamental set  $\mathcal M$, which coincides with the interior of $\mathcal M$,  and which is a \emph{moduli space} for the subset of equivalence classes of the so called \emph{tame} tori.  The complete quotient of the boundary $\partial \mathcal{M}$, with respect to the equivalence relation of biregular diffeomorphism  of the corresponding tori, is still unknown. However, as it happens in the complex case of elliptic curves, the classification of all the boundary tori of $\partial \mathcal{M}$ having non trivial groups of biregular automorphisms 
is an important step towards the understanding of the subtle features of the geometry of the moduli space.

In this perspective, by exploiting the classification of the finite subgroups of unitary quaternions,  we identify  all the groups that can play the role of groups of biregular automorphisms of tori, i.e.,
\begin{equation*} 
2\mathbb{T}, \ \ \ 2D_{4}, \ \ \ 2D_{6}, \ \ \ 2C_1, \ \ \ 2C_2, \ \ \ 2C_3,
\end{equation*}
called respectively \emph{tetrahedral, $8$-dihedral, $12$-dihedral, trivial-cyclic, cyclic-dihedral} and \emph{cyclic} group. We then find those points of the boundary of the fundamental set $\mathcal M$ which correspond to tori whose group of biregular automorphisms either contains, or is isomorphic to, one of the groups listed above. 
We then prove
\begin{teo}\label{armonici} The only quaternionic tori $T$ with a non trivial group of biregular automorphisms $Aut(T)\neq \{\pm1\}$ correspond to the following elements of $\mathcal M$:
\begin{itemize}
\item $(I, \alpha_3, \alpha_3 I)\in \mathcal M$ \ \textrm{with} \  $|\alpha_3|\geq 1 $  and $I^2=-1$\ \textnormal{[}\ $2C_2 \subseteq Aut(T)$\ \textnormal{]};
\item $(e^{\frac{\pi I}{3},} \alpha_3, \alpha_3 e^{\frac{\pi I}{3}})\in \mathcal M$  \ \textrm{with} \  $|\alpha_3|\geq 1 $  and $I^2=-1$  \ \textnormal{[}\ $2C_3 \subseteq Aut(T)$\ \textnormal{]}.
\end{itemize}
\end{teo}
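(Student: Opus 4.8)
\emph{Proof strategy.} The plan is to reduce the statement to one about rank-$4$ lattices and the group $G_\Lambda:=\{a\in\mathbb H^\times:\ \Lambda a=\Lambda\}$, then to determine which $a$ can occur by a characteristic-polynomial computation, and finally to read off the shape of $\Lambda$ and of its special basis. First I would use the description recalled in the Introduction (from \cite{stoppato}) that $Aut(\mathbb H)$ consists exactly of the affine maps $q\mapsto qa+b$, $a\in\mathbb H\setminus\{0\}$. Lifting automorphisms of $T=\mathbb H/\Lambda$ to the universal cover $\mathbb H$, the biregular automorphisms of $T$ fixing the origin are precisely the maps induced by $q\mapsto qa$ with $\Lambda a=\Lambda$, so $Aut(T)=G_\Lambda$, a group visibly containing $\{\pm1\}$. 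Since $R_a\colon q\mapsto qa$ is a $\mathbb Z$-linear automorphism of the rank-$4$ lattice $\Lambda$, the group $G_\Lambda$ is finite; as $|R_a(q)|=|q|\,|a|$ while lattice automorphisms preserve covolume, every $a\in G_\Lambda$ satisfies $|a|=1$, and, being of finite order, is a root of unity in $\mathbb H$.

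Next I would fix $a=\cos\theta+I\sin\theta\in G_\Lambda$ with $I^2=-1$ and compute the order of $a$. Writing $\mathbb H=\mathbb{C}_I\oplus\mathbb{C}_I j$, the map $R_a$ acts as the rotation by $\theta$ on $\mathbb{C}_I$ and by $-\theta$ on $\mathbb{C}_I j$, so its characteristic polynomial is $(t^2-2\cos\theta\,t+1)^2$; since $R_a\in GL(4,\mathbb Z)$ with respect to a $\mathbb Z$-basis of $\Lambda$, this forces $2\cos\theta\in\mathbb Z$, hence $a$ has order $1,2,3,4$ or $6$. If $G_\Lambda\neq\{\pm1\}$ there is $a\neq\pm1$, of order $3,4$ or $6$; as $-1\in G_\Lambda$ always, an order-$3$ element is present if and only if an order-$6$ one is. Hence $G_\Lambda$ then contains an order-$4$ element $I$ (a pure imaginary unit), giving $2C_2=\langle I\rangle\subseteq Aut(T)$, or an order-$6$ element $e^{\pi I/3}$, giving $2C_3=\langle e^{\pi I/3}\rangle\subseteq Aut(T)$, possibly both; this accounts for the bracketed inclusions.

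It then remains to identify $\Lambda$. Put $\zeta=I$ (order $4$), respectively $\zeta=e^{\pi I/3}$ (order $6$); in either case $\zeta\in G_\Lambda$ and $\mathbb Z[\zeta]$ is a principal ideal domain, so $\Lambda$ — made a $\mathbb Z[\zeta]$-module via $z\cdot q:=qz$ — is a torsion-free, hence free, $\mathbb Z[\zeta]$-module of rank $2$. Since $1\in\Lambda$ (special bases begin with $1$) and $\zeta=1\cdot\zeta\in\Lambda$, the planar sublattice $\Lambda\cap\mathbb{C}_I\supseteq\mathbb Z+\mathbb Z\zeta$ has rank $2$ and is invariant under $R_a|_{\mathbb{C}_I}$, a rotation of order $\geq3$; by the crystallographic restriction it is a square (for $\zeta=I$) or hexagonal (for $\zeta=e^{\pi I/3}$) lattice, and since $1$ is a shortest vector of $\Lambda$ (property (1)) and lies in it, $\Lambda\cap\mathbb{C}_I=\mathbb Z+\mathbb Z\zeta=\mathbb Z[\zeta]\cdot1$. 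One checks that $\mathbb Z[\zeta]\cdot1$ is a $\mathbb Z[\zeta]$-direct summand of $\Lambda$ (the quotient is torsion-free: if $wz\in\mathbb{C}_I$ for some $0\neq z\in\mathbb{C}_I$ then $w=(wz)z^{-1}\in\mathbb{C}_I$), so $\Lambda=\mathbb Z[\zeta]\cdot1\oplus\mathbb Z[\zeta]\cdot\alpha_3$ for some $\alpha_3\in\Lambda$; equivalently $\Lambda$ has the $\mathbb Z$-basis $\{1,\zeta,\alpha_3,\alpha_3\zeta\}$, with $|\alpha_3|\geq1$ since $\alpha_3$ is a nonzero lattice vector and $1$ is shortest. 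Running the reduction that defines a special basis on this generating set (which only alters $\alpha_3$ by $\mathbb Z[\zeta]$-translates and by $GL(2,\mathbb Z[\zeta])$-moves fixing the first summand) yields a \emph{special} basis still of the form $\{1,I,\alpha_3,\alpha_3 I\}$ or $\{1,e^{\pi I/3},\alpha_3,\alpha_3 e^{\pi I/3}\}$ with $|\alpha_3|\geq1$, i.e.\ the stated point of $\mathcal M$. Conversely, for any $\Lambda=\mathbb Z+\mathbb Z\zeta+\mathbb Z\alpha_3+\mathbb Z\alpha_3\zeta$ one has $\Lambda\zeta=\Lambda$ (since $\mathbb Z\zeta^2=\mathbb Z$ when $\zeta=I$, and $\mathbb Z+\mathbb Z\zeta=\mathbb Z\zeta+\mathbb Z\zeta^2$ when $\zeta=e^{\pi I/3}$), so $R_\zeta\in Aut(T)$; this gives the two families and completes the argument.

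The step I expect to be the main obstacle is the last one: proving that the presence of a single order-$4$ or order-$6$ automorphism forces $\Lambda$ to split as the $\mathbb Z[\zeta]$-orthogonal sum $\mathbb Z[\zeta]\cdot1\oplus\mathbb Z[\zeta]\cdot\alpha_3$, and — more delicately — that this block decomposition is compatible with, or is carried by the Minkowski-Siegel reduction into, a genuine \emph{special} basis of the prescribed shape with $|\alpha_3|\geq1$. Pinning down $\Lambda\cap\mathbb{C}_I$ as the square or hexagonal lattice (by combining ``$1$ is a shortest vector'' with $R_a$-invariance) and checking that the reduction moves preserve the block form are where the real work lies; by contrast the order computation via the characteristic polynomial and the verification of the converse inclusions are routine.
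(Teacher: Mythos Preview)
Your argument is sound and reaches the same conclusion, but by a genuinely different route from the paper. For the order restriction you use the characteristic polynomial of $R_a$ (the crystallographic restriction $2\cos\theta\in\mathbb Z$), whereas the paper's Lemma~\ref{norma a bis} instead observes that the slice $L_{I_a}$ is $R_a$-invariant, so $1$ and $a$ span a rank-$2$ sublattice of $L\cap L_{I_a}$ and the classical complex result forces $\mathrm{ord}(a)\mid 4$ or $6$. For the structure of $\Lambda$ you invoke that $\mathbb Z[\zeta]$ is a PID to get $\Lambda\cong\mathbb Z[\zeta]^2$ and then split off $\mathbb Z[\zeta]\cdot 1$ as a direct summand; the paper never isolates this module-theoretic fact but instead first lists the finite subgroups of $\mathbb S^3$ that can arise and then, for each of $2C_2$ and $2C_3$ (Propositions~\ref{2C_2}, \ref{2C_3}), works \emph{inside} the Minkowski--Siegel algorithm: having chosen $v_1=1$, $v_2=\zeta$, it picks $v_3$ minimally and uses $R_\zeta$-equivariance to argue that $v_3\zeta$ (same norm, in $L$) is an admissible minimal choice for $v_4$. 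Your PID argument buys a clean algebraic explanation of why a $\mathbb Z$-basis of the shape $\{1,\zeta,\alpha_3,\alpha_3\zeta\}$ exists at all; the paper's approach buys proximity to the actual definition of ``special basis'', so that the passage to a point of $\mathcal M$ is more direct.

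The one place where your sketch does not yet close is exactly the step you flag: the parenthetical assertion that the reduction ``only alters $\alpha_3$ by $\mathbb Z[\zeta]$-translates and by $GL(2,\mathbb Z[\zeta])$-moves fixing the first summand'' is not justified, since Minkowski--Siegel is a $\mathbb Z$-linear/isometric procedure with no a~priori reason to respect the $\mathbb Z[\zeta]$-block decomposition. The paper handles this not by preserving the block form through arbitrary reduction moves, but by exploiting that $R_\zeta$ is an isometry of $L$: the minima at each step of Algorithm~\ref{kowski} can be attained $\zeta$-coherently, so one may take $v_4=v_3\zeta$, with an extra remark disposing of the exceptional Hurwitz lattice where several unit vectors compete. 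To finish your version you can either transplant that equivariance argument, or show directly that after $\mathbb Z[\zeta]$-reducing $\alpha_3$ (translating by $\mathbb Z[\zeta]$ and multiplying by units to minimize $|\alpha_3|$) the Gram matrix of $\{1,\zeta,\alpha_3,\alpha_3\zeta\}$ satisfies conditions B1)--B2) of Proposition~\ref{cnsfundaregion}.
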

\noindent The most structured groups of biregular automorphisms appear in  the tori with the richest symmetries: examples are the torus generated by  the lattice of \emph{Lipschitz quaternions} and the one generated by the lattice of \emph{Hurwitz quaternions},  whose groups of automorphisms are  the $8$-dihedral group $2D_4$ and the tetrahedral group $2\mathbb T$, respectively.
Notice that the complex counterpart of the tori listed in Theorem \ref{armonici} consists of the \emph{ harmonic} and \emph{equianharmonic} tori, having moduli $i$ and $e^{\frac{\pi i}{3}}$, respectively.

Appendices present a computational approach to the study of  the modulus of a torus:  for example, in the last appendix,  an algorithm is produced that checks if a given basis of a lattice is tame. \\
\indent To conclude the Introduction, we point out that the theory of slice regular functions, presented in detail in the monograph \cite{libro}, has been applied to the study of a non-commutative functional calculus, (see for example the monograph \cite{librofc} and references therein) and to address the problem of the construction and classification of orthogonal complex structures in open subsets of the space $\mathbb{H}$ of quaternions (see \cite{GenSalSto}).
Recent results of geometric theory of regular functions appear in \cite{BG}, \cite{BG2}, \cite{BS}, \cite{BS2}, \cite{BS17},
\cite{BohrDGS}, \cite{BlochDGS}, 
 \cite{GPSlice}, \cite{GPseveral}, \cite{GPS}.
Paper \cite{VV} is strictly related to the topic of the present article.

\section{Preliminary results}
In this section we will briefly present those results on slice regular functions that are essential for what follows.

The $4$-dimensional real algebra of quaternions is denoted by $\mathbb{H}$. 
An element $q$ in $\mathbb{H}$ can be expressed in terms of the standard basis, denoted by $\{1,i,j,k\}$, as $q=x_0+x_1i+x_2j+x_3k$, 
where $i,j,k$ are imaginary units, $i^2=j^2=k^2=-1$, related by the multiplication rule $ij=k$.  To every non-real quaternion $q\in \mathbb{H} \setminus \mathbb{R}$ we can associate an imaginary unit, with the map
\[q \mapsto I_q=\frac{ Im(q)}{ | Im(q)|}.\]
If instead $q\in\mathbb{R}$, we can set $I_q$ to be any arbitrary imaginary unit.   
In this way, for any $q\in\mathbb{H}$ there exist, and are unique, $x,y\in\mathbb{R}$, 
with $y\ge 0$ ($y=0$ if $q\in\mathbb{R}$), such that
\[q=x+yI_q.\]  
The set of all imaginary units is denoted by $\mathbb{S}$, 
\[\mathbb{S}=\{q \in \mathbb{H} \, | \, q^2=-1\}\]
and, from a topological point of view, it is a $2$-dimensional sphere sitting in the $3$-dimensional space of purely imaginary quaternions. The symbol $\mathbb{B}$ will denote the open unit ball $\{q\in \mathbb{H}: |q|<1 \}$ of the space $\mathbb{H}$ of quaternions, and the $3$-sphere of all the points of its  boundary $\partial \mathbb{B}$ will be denoted by $\mathbb{S}^3$.

\begin{oss}\label{omotetia} It is worthwhile recalling that the two possible multiplications of a quaternion $q$ by any fixed element $u\in \partial \mathbb B$, that is $q \mapsto qu$ and $q \mapsto uq$,  both represent Euclidean rigid motions (rotations) of $\mathbb H \cong \mathbb R^4$.
Moreover, if $v\in \mathbb H^*$ is any fixed non zero quaternion, then both multiplications, $q \mapsto qv$ and $q \mapsto vq$,  can be decomposed as the composition of a Euclidean rigid motion of $\mathbb H \cong \mathbb R^4$ and a multiplication by a strictly positive real number: the reason is immediate, since  $v=|v|\frac{v}{|v|}$ with $\frac{v}{|v|}\in \partial \mathbb B$.
\end{oss}

To each element $I$ of $\mathbb{S}$ there corresponds a copy of the complex plane, namely $L_I=\mathbb{R}+ I\mathbb{R}\cong \mathbb{C}$. 
All these complex planes, also called {\em slices}, intersect along the real axis, and their union gives back the space of quaternions,
\[\mathbb{H}=\bigcup_{I\in \mathbb{S}} \left(\mathbb{R}+I \mathbb{R}\right)=\bigcup_{I\in\mathbb{S}}L_I.\] 
Since $L_I=\mathbb{R}+ I\mathbb{R}\cong \mathbb{C}$, with the symbol $e^{\alpha I},$ we will mean $\cos \alpha + I \sin \alpha.$
The following definition appears in \cite{GS06,GS}.
\begin{definiz}
Let $\Omega$ be a domain in $\mathbb{H}$ and let $f : \Omega \to \mathbb{H}$ be a function. For all $I \in \mathbb{S}$ 
let us consider 
$\Omega_I = \Omega \cap L_I$ and $f_I = f_{|_{\Omega_I}}$. 
The function $f$ is called \emph{(slice) regular} if, for all $I \in \mathbb{S}$, the restriction $f_I$ has continuous derivatives and the function $\bar \partial_I f : \Omega_I \to \mathbb{H}$ defined by
$$
\bar \partial_I f (x+Iy) = \frac{1}{2} \left( \frac{\partial}{\partial x}+I\frac{\partial}{\partial y} \right) f_I (x+Iy)
$$
vanishes identically.
\end{definiz}
\noindent The same articles introduce the \emph{Cullen (or slice) derivative} $\partial_cf$ of a slice regular function $f$ as
\begin{equation}\label{cullen}
\partial_cf(x+Iy)=\frac{1}{2}\left(\frac{\partial}{\partial x}-I\frac{\partial}{\partial y}\right)f(x+Iy)
\end{equation}
for $I \in \mathbb{S},\ x,y \in \mathbb{R}.$ 
\begin{oss}\label{derivReg}
The Cullen derivative of a slice regular function turns out to be still a slice regular function (see, e.g, \cite[Definition 1.7, page 2]{libro}, \cite{GS}).
\end{oss}
Using the Cullen derivative, it is possible to characterize the slice regular functions defined in the entire space $\mathbb{H}$, or on a ball $B(0, R)=\{q\in \mathbb{H} : |q|<R\}$ centered at $0\in \mathbb{H}$, as follows (see, e.g., \cite{libro}).

\begin{teo}\label{expansion}
A function $f$ is regular in $B(0,R)$ if and only if $f$ has a power series expansion
\[f(q)=\sum_{n \geq 0}q^na_n \quad\text{with} \quad a_n=\frac{1}{n!}\frac{\partial^n f}{\partial x^n}(0)\]
converging in $B(0, R)$. Moreover its Cullen derivative can be expressed as
\[ \partial_c f(q)=\sum_{n \geq 0}q^nb_n \quad\text{with} \quad b_n=\frac{1}{n!}\frac{\partial^{n+1} f}{\partial x^{n+1}}(0)\]
in $B(0,R)$. 
\end{teo}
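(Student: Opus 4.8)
The plan is to reduce the quaternionic statement to one-variable complex analysis by working one slice at a time, and then to reassemble the slice-wise expansions into a single global series by exploiting the fact that the expansion is centered at the real point $0$.

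First I would fix an imaginary unit $I \in \mathbb{S}$ and choose $J \in \mathbb{S}$ with $J \perp I$, so that $\{1, I, J, IJ\}$ is an orthonormal basis of $\mathbb{H}$ and every quaternion writes uniquely as $w_1 + w_2 J$ with $w_1, w_2 \in L_I$. Splitting the restriction as $f_I = F_1 + F_2 J$ with $F_1, F_2 : \Omega_I \to L_I$, and using that $J$ is constant while $L_I$ is closed under multiplication, the vanishing of $\bar\partial_I f$ decouples, component by component, into $\bar\partial_I F_1 = \bar\partial_I F_2 = 0$ (the two pieces lie in the complementary subspaces $L_I$ and $L_I J$). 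Hence $F_1, F_2$ are ordinary $L_I \cong \mathbb{C}$-valued holomorphic functions on the disk $\Omega_I = B(0,R)\cap L_I$ of radius $R$, and classical complex analysis provides convergent expansions $F_\ell(z) = \sum_{n\geq 0} z^n \gamma_n^{(\ell)}$ there. Recombining gives $f_I(z) = \sum_{n\geq 0} z^n c_n$ with $c_n = \gamma_n^{(1)} + \gamma_n^{(2)} J \in \mathbb{H}$, and since the complex derivative along $L_I$ coincides with $\partial/\partial x$, one reads off $c_n = \frac{1}{n!}\frac{\partial^n f_I}{\partial x^n}(0)$.

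The crux is to verify that $c_n$ does not depend on the chosen slice. This is exactly where centering at $0$ matters: the origin lies on the real axis $\mathbb{R} = \bigcap_{I\in\mathbb{S}} L_I$, and $\partial/\partial x$ differentiates in the real direction, common to every slice, so $\frac{\partial^n f_I}{\partial x^n}(0)$ is determined by the restriction of $f$ to $\mathbb{R}$ near $0$ and is independent of $I$. Setting $a_n := \frac{1}{n!}\frac{\partial^n f}{\partial x^n}(0)$, the identity $f(q) = \sum_n q^n a_n$ then holds on every slice at once, hence throughout $B(0,R)$. For convergence on the full ball I would observe that the radius of convergence $\big(\limsup_n |a_n|^{1/n}\big)^{-1}$ depends only on the fixed $a_n$; since the slice expansions converge on disks of radius $R$, this radius is at least $R$, and the root test yields absolute, locally uniform convergence of $\sum_n q^n a_n$ on $B(0,R)$.

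The converse is immediate: if $\sum_n q^n a_n$ converges on $B(0,R)$, its restriction to each slice is a convergent complex power series, hence holomorphic with continuous derivatives and $\bar\partial_I$-closed, so $f$ is regular. For the Cullen derivative I would use that on each slice $\partial_c = \frac{1}{2}(\partial_x - I\partial_y)$ agrees with $d/dz$ on a holomorphic function, so term-by-term differentiation inside the disk of convergence gives $\partial_c f(q) = \sum_{m\geq 0} q^m (m+1) a_{m+1}$; writing $b_m = (m+1)a_{m+1}$ and substituting $a_{m+1} = \frac{1}{(m+1)!}\frac{\partial^{m+1} f}{\partial x^{m+1}}(0)$ yields precisely $b_m = \frac{1}{m!}\frac{\partial^{m+1} f}{\partial x^{m+1}}(0)$. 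The single genuine obstacle is the slice-independence of the coefficients established in the previous paragraph; the rest is a faithful transcription of standard holomorphic one-variable results.
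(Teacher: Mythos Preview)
The paper does not prove this theorem: it is stated in the preliminaries as a known result, with a reference to \cite{libro} (and implicitly to \cite{GS06,GS}). So there is no in-paper proof to compare against.

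Your argument is correct and is in fact the standard one from the cited literature. The decomposition $f_I = F_1 + F_2 J$ with $F_1,F_2:\Omega_I\to L_I$ holomorphic is exactly the \emph{Splitting Lemma} of Gentili--Struppa, and the slice-independence of the coefficients via the common real axis is precisely how the global expansion is obtained in \cite{GS,libro}. One very small point you might make explicit: when you say the restriction of a convergent $\sum_n q^n a_n$ to a slice is holomorphic, you are implicitly splitting each $a_n=\alpha_n+\beta_n J$ with $\alpha_n,\beta_n\in L_I$ so that $f_I(z)=\sum_n z^n\alpha_n + \big(\sum_n z^n\beta_n\big)J$; this is what justifies continuity of derivatives and $\bar\partial_I$-closedness componentwise. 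Otherwise the proof is complete.
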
  

The existence of the power series expansion yields a Liouville Theorem, that we will use in the sequel:

\begin{teo}[Liouville]\label{GS}
Let  $f: \mathbb{H} \to \mathbb{H}$ be slice regular. If $f$ is bounded  then $f$ is constant.
\end{teo}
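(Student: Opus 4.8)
The plan is to use the global power series expansion from Theorem~\ref{expansion}. Since $f$ is regular on all of $\mathbb{H}$, i.e. on $B(0,R)$ for every $R>0$, Theorem~\ref{expansion} gives $f(q)=\sum_{n\ge 0}q^na_n$ with the series converging throughout $\mathbb{H}$; in particular $\limsup_n|a_n|^{1/n}=0$, so that $\sum_{n\ge 0}r^n|a_n|<\infty$ for every $r>0$. It therefore suffices to prove that $a_n=0$ for all $n\ge 1$, since then $f\equiv a_0$.

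To isolate the coefficients I would restrict to a slice and run a Cauchy-type averaging estimate. Fix $I\in\mathbb{S}$ and $r>0$. For $q=re^{I\theta}\in L_I$ one has $q^n=r^ne^{In\theta}$, so, multiplying $f(re^{I\theta})=\sum_{n\ge 0}r^ne^{In\theta}a_n$ on the left by $e^{-Im\theta}\in L_I$ (which commutes with each $e^{In\theta}$), one gets $e^{-Im\theta}f(re^{I\theta})=\sum_{n\ge 0}r^ne^{I(n-m)\theta}a_n$. Because $\sum_n r^n|a_n|<\infty$, this series converges uniformly in $\theta\in[0,2\pi]$, hence may be integrated term by term; using $\int_0^{2\pi}e^{I(n-m)\theta}\,d\theta=2\pi\,\delta_{nm}$ one obtains
\[
\frac{1}{2\pi}\int_0^{2\pi}e^{-Im\theta}f(re^{I\theta})\,d\theta=r^ma_m .
\]

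Now boundedness enters. Let $M=\sup_{\mathbb{H}}|f|<\infty$. Taking norms in the displayed identity and using $|e^{-Im\theta}|=1$,
\[
r^m|a_m|\le \frac{1}{2\pi}\int_0^{2\pi}\bigl|f(re^{I\theta})\bigr|\,d\theta\le M .
\]
For $m\ge 1$ letting $r\to+\infty$ gives $|a_m|\le M/r^m\to 0$, so $a_m=0$; thus $f(q)=a_0$ is constant.

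This is just the classical Cauchy-estimate proof of Liouville's theorem transplanted onto a single slice, and I expect no genuine obstacle beyond one bookkeeping point: the term-by-term integration must be justified, and it is legitimate precisely because $f$ being regular on all of $\mathbb{H}$ forces $\sum_n r^n|a_n|<\infty$ for every $r$, hence uniform convergence of the power series on each circle $\{|q|=r\}\cap L_I$. An equivalent route, if one prefers to appeal only to the complex Liouville theorem, is to write $a_n=\alpha_n+\beta_nJ$ with $\alpha_n,\beta_n\in L_I$ and $J\in\mathbb{S}$ anticommuting with $I$: then for $z\in L_I$ one has $f(z)=F(z)+G(z)J$ with $F(z)=\sum z^n\alpha_n$ and $G(z)=\sum z^n\beta_n$ entire $L_I$-valued (hence genuinely complex) functions, and $|f|^2=|F|^2+|G|^2$ shows $F$ and $G$ are bounded, so they are constant and again $a_n=0$ for $n\ge 1$.
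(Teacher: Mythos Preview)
Your argument is correct. The paper does not actually give a proof of this Liouville theorem: it is stated as a preliminary result, with the only justification being the sentence ``The existence of the power series expansion yields a Liouville Theorem,'' referring back to Theorem~\ref{expansion}. Your Cauchy-estimate argument on a single slice is precisely the standard way to turn that remark into a proof, and the bookkeeping (left-multiplication by $e^{-Im\theta}\in L_I$, which commutes with the $e^{In\theta}$ factors while leaving the right coefficients $a_n$ untouched; uniform convergence from $\sum_n r^n|a_n|<\infty$) is handled correctly. The alternative splitting $f_I=F+GJ$ you mention at the end is the well-known Splitting Lemma for slice regular functions and gives an equally valid route via the classical complex Liouville theorem; either approach is in the spirit of what the paper is invoking.
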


\section{Lattices in the space  of quaternions}

Let $\omega_{1},\ldots, \omega_{m}$ (with $m\leq 4$) be $\mathbb{R}$-linearly
independent vectors in $\mathbb{H}$. 

\begin{definiz}
 The additive
subgroup of $(\mathbb{H}, +)$ generated by $\omega_{1},\ldots, \omega_{m}$
is called a {\sl rank-$m$ lattice, generated} by $\omega_{1},\ldots, \omega_{m}$.
\end{definiz}

We will focus our attention on (topologically) discrete subgroups of $(\mathbb{H},
+)$, for which the following result holds:

\begin{lemma} \label{LemmaCauchy}
Let $M$ be a discrete (infinite) subgroup of $(\mathbb{H}, +)$. Then $M$ has no accumulation points.
\end{lemma}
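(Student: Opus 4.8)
The plan is to argue by contradiction, exploiting the translation invariance of the group structure to transfer a hypothetical accumulation point, located anywhere in $\mathbb H$, to an accumulation at the origin, where it immediately contradicts discreteness. The only structural input needed is this: since $M$ is discrete and $0\in M$, the point $0$ is isolated in $M$, so there exists $r>0$ with $B(0,r)\cap M=\{0\}$.

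Suppose, for contradiction, that $p\in\mathbb H$ is an accumulation point of $M$. Since $\mathbb H$ is a metric space, one can choose a sequence $(m_n)_{n\in\mathbb N}$ of pairwise distinct elements of $M$ with $m_n\to p$. Being convergent, $(m_n)$ is a Cauchy sequence --- this is where the name of the lemma enters --- so there is an index $N$ such that $|m_n-m_N|<r$ for all $n\ge N$. Now pick any $n>N$ with $m_n\neq m_N$, which is possible because the $m_n$ are pairwise distinct. Then $g:=m_n-m_N$ lies in the subgroup $M$, it is nonzero, and $|g|<r$, so $g\in \bigl(B(0,r)\cap M\bigr)\setminus\{0\}$, contradicting the choice of $r$. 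Hence no such $p$ exists, and $M$ has no accumulation points in $\mathbb H$.

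I do not foresee a genuine obstacle here: the argument is short, and the only care required is bookkeeping about the definitions of \emph{discrete} (isolatedness of $0$ in $M$) and of \emph{accumulation point} (existence of a sequence of pairwise distinct elements of $M$ converging to it), together with the observation that forming the differences $m_n-m_N\in M$ is precisely what lets the group structure do the work. One can equivalently phrase the same idea by noting that $m_n-m_{n+1}\to 0$ yields nonzero elements of $M$ arbitrarily close to the origin, again contradicting $B(0,r)\cap M=\{0\}$.
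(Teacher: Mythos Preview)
Your proof is correct and follows essentially the same approach as the paper: both arguments take a sequence in $M$ converging to a hypothetical accumulation point, use the Cauchy property to produce nonzero differences of lattice elements arbitrarily close to $0$, and conclude that this contradicts the isolation of $0$ in $M$. Your version is slightly more streamlined in that you do not separate the cases $p\in M$ and $p\notin M$, and you only need a single nonzero element in $B(0,r)\cap M$ rather than a full sequence tending to $0$, but these are cosmetic differences.
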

\begin{proof1} Since $M$ is discrete, there are no accumulation points of $M$ belonging to
$M$.
By contradiction, assume that there exists an accumulation point $q$ of
$M$, belonging to $\mathbb{H}\setminus M$. Then there exists  a sequence
$\{ q_n \}_{n \in \mathbb{N}} \subseteq M$ 
converging to $q$. 
Since $\{ q_n \}_{n \in \mathbb{N}}\subseteq M$ is a Cauchy sequence, for all $m \in \mathbb{N},$
there exist $r_m,\,\,\, s_m \in \mathbb{N}$ such that $q_{r_m} \neq q_{s_m}$ and that
\begin{equation} \label{eq1}
0<|q_{r_m} -q_{s_m}| < \frac{1}{m}.
\end{equation} 
If we define $\alpha_m=q_{r_m} - q_{s_m} \in M,$ inequality
(\ref{eq1}) would imply that $0 \in M$ is not isolated, 
being the limit of $\{ \alpha_m \}_{m \in \mathbb{N}}$ when $m \to +\infty.$
\end{proof1}

As a straightforward consequence we obtain:

\begin{corollario} Let $M$ be a subgroup of $(\mathbb{H}, +)$, let $R\in \mathbb{R}^+$ 
and let $\overline{B(0,R)}$ be the
closure of $B(0,R)$. Then $M$ is
discrete if and only if, for all $R\in \mathbb{R}^+$, the intersection $\overline{B(0,R)}\cap M$ 
is a finite set.
\end{corollario}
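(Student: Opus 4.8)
The plan is to prove both implications of the corollary by exploiting Lemma~\ref{LemmaCauchy}. For the forward direction, suppose $M$ is discrete and fix $R\in\mathbb{R}^+$. The set $\overline{B(0,R)}\cap M$ is bounded by construction, so if it were infinite it would have an accumulation point in the compact set $\overline{B(0,R)}\subseteq\mathbb{H}$ by the Bolzano--Weierstrass property of $\mathbb{R}^4$. But then $M$ itself would have an accumulation point in $\mathbb{H}$, contradicting Lemma~\ref{LemmaCauchy}, which asserts precisely that a discrete (infinite) subgroup of $(\mathbb{H},+)$ has no accumulation points. (If $M$ is finite the statement is trivial, so one may assume $M$ is infinite.) Hence $\overline{B(0,R)}\cap M$ is finite.

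For the converse, assume that $\overline{B(0,R)}\cap M$ is finite for every $R\in\mathbb{R}^+$, and suppose toward a contradiction that $M$ is not discrete. Then some point $p\in M$ fails to be isolated in $M$, so there is a sequence of distinct points of $M$ converging to $p$; in particular infinitely many points of $M$ lie within, say, distance $1$ of $p$, hence within distance $R:=|p|+1$ of $0$. This forces $\overline{B(0,R)}\cap M$ to be infinite, contradicting the hypothesis. Therefore $M$ is discrete.

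I expect the argument to be essentially routine: the only mild subtlety is the standard fact that a bounded infinite subset of $\mathbb{H}\cong\mathbb{R}^4$ has an accumulation point, together with the observation that an accumulation point of a subset of $M$ is a fortiori an accumulation point of $M$, so Lemma~\ref{LemmaCauchy} applies even though that lemma was stated for the whole group. The main (very minor) obstacle is bookkeeping the trivial case $M$ finite and being careful that ``discrete'' is being used in the sense of ``every point is isolated'', which matches the usage in Lemma~\ref{LemmaCauchy} and its proof.
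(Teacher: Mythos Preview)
Your argument is correct. The paper does not actually write out a proof of this corollary: it is introduced with ``As a straightforward consequence we obtain'' and then left unproved, so there is nothing to compare against beyond the implicit expectation that one uses Lemma~\ref{LemmaCauchy} together with Bolzano--Weierstrass, exactly as you do.
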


The following classical characterization of discrete subgroups of $(\mathbb{H}, +)$
will be used as a basic fact in the sequel (for a proof see, e.g.,
\cite[Theorem 6.1, page 136]{St}). 

\begin{teo}\label{discreteness}
A subgroup of $(\mathbb{H}, +)$ is a lattice if and only if it is
discrete. 
\end{teo}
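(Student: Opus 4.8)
The plan is to establish the two implications separately: that a lattice is discrete is elementary, while the converse rests on the compactness of a fundamental parallelepiped combined with a small dose of the structure theory of finitely generated abelian groups.

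For the direct implication, let $M$ be the rank-$m$ lattice generated by $\mathbb{R}$-linearly independent vectors $\omega_1,\dots,\omega_m$. I would complete these to an $\mathbb{R}$-basis $\omega_1,\dots,\omega_4$ of $\mathbb{H}\cong\mathbb{R}^4$ and let $A\in GL(4,\mathbb{R})$ be the (linear, hence bi-continuous) isomorphism carrying this basis to the standard one. Then $A(M)=\mathbb{Z}^m\times\{0\}^{4-m}$ is patently discrete, and since discreteness is preserved by homeomorphisms, $M$ is discrete as well.

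For the converse, assume $M$ is discrete and nontrivial; set $V=\mathrm{span}_{\mathbb{R}}(M)$ and $m=\dim_{\mathbb{R}}V$. Since $M$ spans $V$, I can extract from $M$ an $\mathbb{R}$-basis $\omega_1,\dots,\omega_m$ of $V$; put $L=\mathbb{Z}\omega_1+\cdots+\mathbb{Z}\omega_m\subseteq M$. The fundamental parallelepiped $P=\{\,t_1\omega_1+\cdots+t_m\omega_m : 0\le t_i\le 1\,\}$ is compact, hence contained in some $\overline{B(0,R)}$; by Lemma \ref{LemmaCauchy} (and its Corollary) the set $M\cap P$ is therefore finite, say of cardinality $N$. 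Writing any $x\in M$ as $x=\sum_i t_i\omega_i$ with $t_i\in\mathbb{R}$ and setting $y=\sum_i\lfloor t_i\rfloor\,\omega_i\in L\subseteq M$, one has $x-y=\sum_i(t_i-\lfloor t_i\rfloor)\omega_i\in M\cap P$, so $M=L+(M\cap P)$ and the quotient $M/L$ has at most $N$ elements.

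To conclude, let $N'=|M/L|$: then $N'M\subseteq L$, so $M$ sits between the rank-$m$ free group $L$ and the rank-$m$ free group $\frac{1}{N'}L\cong\mathbb{Z}^m$. By the structure theorem for finitely generated abelian groups, $M$ is then free of rank exactly $m$; any $\mathbb{Z}$-basis $\eta_1,\dots,\eta_m$ of $M$ spans $V$ over $\mathbb{R}$, and being $m$ vectors spanning an $m$-dimensional space, the $\eta_i$ are $\mathbb{R}$-linearly independent. Hence $M$ is the rank-$m$ lattice generated by $\eta_1,\dots,\eta_m$. The step I expect to need the most care is the identity $M=L+(M\cap P)$ together with the ensuing finiteness of $M/L$ — once that is in hand, everything that follows is essentially bookkeeping with free abelian groups; the discreteness hypothesis enters precisely (and only) through the finiteness of $M\cap P$.
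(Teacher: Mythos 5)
Your argument is correct. Note, however, that the paper does not prove this statement at all: it treats it as a classical fact and refers the reader to \cite[Theorem 6.1, page 136]{St}, so there is no in-paper proof to compare against. What you have written is the standard self-contained proof: the easy direction via a linear change of coordinates taking the generators to part of the standard basis, and the converse via the sandwich $L\subseteq M\subseteq \frac{1}{N'}L$ obtained from the compact fundamental parallelepiped, the finiteness of $M\cap P$, and the structure theory of subgroups of free abelian groups of finite rank. This is a legitimate alternative to the inductive shortest-vector argument usually given in references such as \cite{St}, and it has the merit of feeding the discreteness hypothesis through exactly one point, namely the finiteness of $M\cap P$, which you correctly justify by the Corollary to Lemma \ref{LemmaCauchy} (discreteness is equivalent to finiteness of $M\cap\overline{B(0,R)}$ for every $R$). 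Two small housekeeping points: you should dispose of the trivial subgroup $M=\{0\}$ explicitly (either by admitting it as the rank-$0$ lattice or by excluding it, as the paper implicitly does), and when you invoke the structure theorem it is cleaner to say precisely that a subgroup of a free abelian group of rank $m$ is free of rank at most $m$, so that $M$, containing $L\cong\mathbb{Z}^m$, is free of rank exactly $m$; with that phrasing the final step (an $m$-element $\mathbb{Z}$-basis of $M$ spans the $m$-dimensional space $V$, hence is $\mathbb{R}$-linearly independent) closes the proof exactly as you indicate.
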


This last theorem implies that the study of all possible quotient
spaces of $(\mathbb{H}, +)$ with respect to a discrete additive
subgroup $M$ is reduced to the case in which $M$ is a lattice.  
With the aim of classifying these quotients, let $T^{m}$ denote the direct product of $m$ copies of the unit circle
$S$ of $\mathbb{R}^{2}$, and call it the {\sl $m$-dimensional torus}.

It is well known that, given the rank $m$ 
of a lattice in $\mathbb{H}\cong \mathbb{R}^4$, there exists only ``one'' quotient, up to real
diffeomorphisms (see, e.g., \cite[Theorem 6.4, page 140]{St}): 

\begin{teo} Let $L$ be a rank-$m$ lattice in $\mathbb{H}$ (with $m\leq 4$).
    Then the group $\mathbb{H}/ L$ is isomorphic to $T^{m}\times
    \mathbb{R}^{4-m}$.
\end{teo}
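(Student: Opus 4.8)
The plan is to reduce to the known structure theorem for lattices of full rank in Euclidean space. Given a rank-$m$ lattice $L$ in $\mathbb H\cong\mathbb R^4$ generated by $\mathbb R$-linearly independent vectors $\omega_1,\dots,\omega_m$, I would first complete $\{\omega_1,\dots,\omega_m\}$ to an $\mathbb R$-basis $\{\omega_1,\dots,\omega_m,\omega_{m+1},\dots,\omega_4\}$ of $\mathbb H$. The $\mathbb R$-linear automorphism $\Phi\colon\mathbb R^4\to\mathbb H$ sending the standard basis $e_1,\dots,e_4$ to $\omega_1,\dots,\omega_4$ is a diffeomorphism of $\mathbb R^4$ carrying the "model" lattice $\mathbb Z^m\times\{0\}^{4-m}$ onto $L$, and it carries the subspace $\mathbb R^m\times\{0\}^{4-m}$ onto $\mathrm{span}_{\mathbb R}(\omega_1,\dots,\omega_m)$. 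Hence $\Phi$ descends to a diffeomorphism of quotient groups
\[
\mathbb R^4/(\mathbb Z^m\times\{0\}^{4-m})\;\xrightarrow{\ \sim\ }\;\mathbb H/L .
\]

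Next I would observe that the left-hand quotient splits as a direct product. Writing $\mathbb R^4=\mathbb R^m\times\mathbb R^{4-m}$ and $\mathbb Z^m\times\{0\}^{4-m}=(\mathbb Z^m)\times\{0\}^{4-m}$, the quotient is canonically $(\mathbb R^m/\mathbb Z^m)\times\mathbb R^{4-m}$ as a topological group; and $\mathbb R^m/\mathbb Z^m$ is, by the one-variable fact $\mathbb R/\mathbb Z\cong S$ applied coordinatewise, isomorphic (as a Lie group, hence diffeomorphically) to $T^m=S\times\cdots\times S$ ($m$ factors). Composing, $\mathbb H/L\cong T^m\times\mathbb R^{4-m}$, which is the assertion. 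One should remark that $\Phi$ is only $\mathbb R$-linear, not $\mathbb H$-linear, so the isomorphism produced is an isomorphism of real Lie groups (equivalently, of topological groups together with a smooth structure), exactly as the statement claims.

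The only point requiring a word of care is that $\Phi$ genuinely descends to the quotients and that the induced map is a diffeomorphism: this is the standard fact that if $\Phi\colon\mathbb R^4\to\mathbb H$ is a linear isomorphism with $\Phi(\Lambda)=L$ for lattices (in the rank-$4$ sense, here of possibly lower rank, i.e.\ discrete subgroups by Theorem \ref{discreteness}) $\Lambda,L$, then $\Phi$ induces a group isomorphism and a homeomorphism $\mathbb R^4/\Lambda\to\mathbb H/L$ which is smooth with smooth inverse, because the quotient projections are local diffeomorphisms and $\Phi$ intertwines them. I do not expect a serious obstacle here; the main (minor) bookkeeping point is choosing the completion of the generating set and checking that the product decomposition of $\mathbb R^4/(\mathbb Z^m\times\{0\}^{4-m})$ is compatible with the smooth structure, which is immediate since everything is linear. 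Since the cited reference \cite[Theorem 6.4]{St} contains exactly this argument, one could alternatively just invoke it directly.
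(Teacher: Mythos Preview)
Your argument is correct and is the standard one. The paper does not actually supply a proof of this statement: it merely cites \cite[Theorem 6.4, page 140]{St} and moves on, so there is nothing to compare against beyond noting that what you wrote is precisely the kind of argument that reference contains---complete the generators to an $\mathbb{R}$-basis, transport the problem via a linear isomorphism to $\mathbb{R}^4/(\mathbb{Z}^m\times\{0\}^{4-m})$, and split the quotient as $T^m\times\mathbb{R}^{4-m}$.
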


The case of a rank-$4$ lattice is the one in which the
quotient originates ``the'' real
$4$-dimensional torus:

\begin{corollario} Let $L$ be a rank-$4$ lattice in $\mathbb{H}$.
    Then the group $\mathbb{H}/ L$ is isomorphic to $T^4$.
\end{corollario}

As we can see, up to real diffeomorphisms the classification is quite simple.
Following the guidelines of the classical theory of complex elliptic functions
we will work at the classification of 
$4$-(real)-dimensional, quaternionic tori, up to biregular
diffeomorphisms. In the next section we will define slice quaternionic
structures on tori, that will be the object of our classification.

\section{A regular quaternionic structure on a $4$-(real)-dimensional torus}


Since quaternionic regular affine transformations form a group with respect to composition, and analogously to what happens for complex tori, the field $\mathbb H$ induces on a quaternionic torus $T^4=\mathbb H/L$ a structure of quaternionic manifold. This
structure will be called \emph{a regular quaternionic structure}
or simply a \emph{quaternionic} structure on $T^4$. From now on, a torus $T^4$ endowed with a quaternionic structure will be called a \emph{quaternionic torus}. Moreover, the $4$-(real)-dimensional torus $T^4$ will always be denoted simply by $T$. 

To construct such a structure, we
will first of all consider the classical atlas $\mathcal{U}$ of the real torus
$T$,  and then adopt the procedure used in the complex case (see,
e.g., \cite{bredon, FK, Ves}).
Let $L$ be a rank-$4$ lattice  of $\mathbb{H}$, generated by
$\omega_{1}, \omega_{2}, \omega_{3}, \omega_{4}$. Consider the canonical projection $\pi: \mathbb{R}^{4}\cong \mathbb{H} \to
\mathbb{H}/ L=T$ and, for any $p\in \mathbb{H}$,  an open
neighborhood $U_{p}$ of $p$ small enough to make $\pi_{{|_{U_{p}}}}$ an
homeomorphism of $U_{p}$ onto its image $\pi(U_{p})$. The atlas $\mathcal U$ will consist of the local coordinate systems
$\left\{\left( \pi(U_{p}), (\pi_{{|_{U_{p}}}})^{-1}\right)\right\}_{p\in \mathbb{H}}$. If 
we suppose that, for $p, q\in \mathbb{H}$, the intersection
$\pi(U_{p})\cap \pi(U_{q})$ is (open and) connected, then the change of 
coordinates is such that $\pi_{{|_{U_{p}}}}^{-1}\circ
\pi_{{|_{U_{q}}}}(x) = x +\sum_{l=1}^{4}n_{l}\omega_{l}$ for fixed
$n_{1}, n_{2}, n_{3}, n_{4}$. Hence the change of coordinates is a 
 regular function. Therefore we obtain a quaternionic structure
on $T$.
Using the classical approach,  \emph{regular maps} between quaternionic tori can be defined in the natural manner, as well as \emph{biregular diffeomorphisms} between quaternionic tori, and \emph{biregular automorphisms} of a quaternionic torus. We can then proceed to study the quaternionic tori up to biregular diffeomorphisms, and give the following:

    \begin{definiz}\label{equivalent-tori}
     If there is a biregular diffeomorphism of a $4$-(real)-dimensional torus $T_1$ onto a ($4$-(real)-dimensional) torus $T_2,$ we will say that the two tori are \emph{equivalent}.
    \end{definiz}
To proceed,  we recall that the group $Aut(\mathbb{H})$ of biregular transformations (or
automorphisms) 
of $\mathbb{H}$
consists of all slice regular affine transformations, 
that is
$$
Aut(\mathbb{H})=\{f(q)=qa+b : a,b \in
\mathbb{H}, a\neq 0\}
$$
(see 
\cite{stoppato}).

The result stated in the next proposition has a complete analog in the complex setting, \cite[Theorem 4.1, page 10]{FK}. 
Nevertheless we 
will produce a proof, to acquire familiarity with the quaternionic
environment and to establish notations to be used.

\begin{prop} \label{lifting}
Let $L_{1}$ and $L_{2}$ be  two rank-$4$ lattices in
$\mathbb{H}$, let $\pi_{1}: \mathbb{H} \to \mathbb{H}/L_{1}= T_{1}$
and $\pi_{2}: \mathbb{H} \to \mathbb{H}/L_{2}= T_{2}$ be the
projections on the quotient tori. For any $F \in Aut(\mathbb{H})$ such
that $F(L_{1}) = L_{2}$
there exists a biregular diffeomorphism $f$ of $T_{1}$ onto  $T_{2}$
which allows the equality $f\circ \pi_{1} = \pi_{2} \circ F$. Conversely, for any biregular 
diffeomorphism $f$ of $T_{1}$ onto  $T_{2}$, there exists 
$F \in Aut(\mathbb{H})$ such that $f\circ \pi_{1} = \pi_{2} \circ F$
and $F(L_{1})= L_{2}$.
\end{prop}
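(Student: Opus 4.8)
The plan is to prove the two implications separately, using the lifting theory for covering spaces together with the explicit description of $Aut(\mathbb{H})$.

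For the first implication, suppose $F\in Aut(\mathbb{H})$ with $F(L_1)=L_2$. Write $F(q)=qa+b$ with $a\neq 0$. The idea is that $F$ descends to the quotients: I would check that $F$ sends $L_1$-cosets to $L_2$-cosets, i.e. that if $q'-q\in L_1$ then $F(q')-F(q)=(q'-q)a\in L_2$, which holds precisely because $F(L_1)\subseteq L_2$ (using that $F(q'-q) = (q'-q)a$ since the linear part of $F$ is $q\mapsto qa$, and $F(L_1)=L_2$ as stated). Hence the composition $\pi_2\circ F:\mathbb H\to T_2$ is constant on $L_1$-cosets, and by the universal property of the quotient $\pi_1:\mathbb H\to T_1$ there is a unique map $f:T_1\to T_2$ with $f\circ\pi_1=\pi_2\circ F$. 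I would then argue $f$ is a diffeomorphism: its inverse is produced the same way from $F^{-1}\in Aut(\mathbb{H})$, noting $F^{-1}(L_2)=L_1$. Finally, $f$ is regular because, reading it in the local charts of the atlas $\mathcal U$ constructed above, $f$ is locally expressed exactly by $F$ (up to translation by lattice vectors), and $F$ is slice regular; the translations are regular and regularity is a local property, so $f$ is regular, and likewise $f^{-1}$. Thus $f$ is a biregular diffeomorphism.

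For the converse, suppose $f:T_1\to T_2$ is a biregular diffeomorphism. Since $\pi_1,\pi_2$ are the universal covering maps of the tori (each $\mathbb H$ is simply connected), the composition $f\circ\pi_1:\mathbb H\to T_2$ lifts through $\pi_2$ to a map $F:\mathbb H\to\mathbb H$ with $\pi_2\circ F=f\circ\pi_1$; the lift exists and is unique once we fix the value of $F$ at one point, by the standard lifting criterion. I would then show $F$ is a biregular diffeomorphism of $\mathbb H$: it is regular because locally it agrees with $f$ read in the charts (again regularity is local and the chart transition maps are translations), and it is bijective with regular inverse obtained by lifting $f^{-1}\circ\pi_2$. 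By the classification recalled above, $Aut(\mathbb H)=\{q\mapsto qa+b: a\neq0\}$, so $F(q)=qa+b$ for suitable $a\in\mathbb H^\ast$, $b\in\mathbb H$. It remains to check $F(L_1)=L_2$. For $\lambda\in L_1$ and any $q\in\mathbb H$ we have $\pi_2(F(q+\lambda))=f(\pi_1(q+\lambda))=f(\pi_1(q))=\pi_2(F(q))$, so $F(q+\lambda)-F(q)=\lambda a\in L_2$; this gives $F(L_1)\subseteq L_2$. Applying the same reasoning to $F^{-1}$ (which lifts $f^{-1}$) yields $F^{-1}(L_2)\subseteq L_1$, hence $aL_1$... more precisely $F^{-1}(q+\mu)-F^{-1}(q)\in L_1$ for $\mu\in L_2$, and combining the two inclusions gives $F(L_1)=L_2$.

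The main obstacle I expect is not in the covering-space formalism, which is standard, but in verifying carefully that the lifted map $F$ (and its inverse) is genuinely \emph{slice regular} rather than merely smooth: one must use that regularity is a local condition and that the chart changes of $\mathcal U$ are translations by lattice vectors, hence regular, so that regularity of $f$ on $T_1$ transfers to regularity of $F$ on $\mathbb H$. Once that is in place, the identification $F(q)=qa+b$ is immediate from Theorem-type result on $Aut(\mathbb H)$ recalled above, and the lattice condition $F(L_1)=L_2$ drops out by a short coset computation together with the symmetric argument for $F^{-1}$.
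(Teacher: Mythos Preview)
Your plan is correct; the first implication is handled essentially as in the paper. The converse is where you diverge. You argue that the lift $F$ is a biregular bijection of $\mathbb H$ (by also lifting $f^{-1}$, which furnishes an inverse up to a deck translation) and then invoke the cited classification $Aut(\mathbb H)=\{q\mapsto qa+b:a\neq0\}$ as a black box to conclude that $F$ is affine. The paper never establishes bijectivity of $F$ directly: instead it observes that for each $\lambda\in L_1$ the map $q\mapsto F(q+\lambda)-F(q)$ takes values in the discrete lattice $L_2$ and is therefore constant, so $\partial_cF$ is $L_1$-periodic, hence bounded, hence constant by the quaternionic Liouville theorem; the power-series expansion then forces $F(q)=qa+b$ with no appeal to the structure of $Aut(\mathbb H)$. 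Your route is more symmetric and conceptually tidy but delegates the real content to the (Casorati--Weierstrass based) description of $Aut(\mathbb H)$; the paper's route stays entirely within the Liouville/power-series toolkit set up in the preliminaries and is in that sense self-contained. For the lattice equality you use the two inclusions $L_1a\subseteq L_2$ and $L_2a^{-1}\subseteq L_1$ coming from $f$ and $f^{-1}$, whereas the paper shows that a strict inclusion $L_1a\subsetneq L_2$ would force $f$ to be non-injective. One small slip to fix when you write this up: from $F(q+\lambda)-F(q)=\lambda a\in L_2$ you obtain $L_1a\subseteq L_2$, not $F(L_1)\subseteq L_2$; the extra ingredient $b=F(0)\in L_2$ is tacit here (and in the paper).
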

\begin{proof}

%
Let $F(v)=va+b$. Since $ 0\in L_{1}$ we have $F(0)=b
\in L_{2}$ and hence we can suppose $b=0$. 
By definition of regular map between tori, to show that $F(v)=va$
induces a biregular diffeomorphism $f$ of $T_{1}$ onto 
$T_{2}$,
\begin{equation}\label{diagramma}
\xymatrix{\mathbb{H} \ar[r]^{F} \ar[d]_{\pi_1} & \mathbb{H}
\ar[d]^{\pi_2} \\
T_{1} \ar[r]_{f} & T_{2}}
\end{equation}
it is enough to show that $q\sim p$ implies $F(q)\sim
F(p)$. Indeed, if $q\sim p$ then $q-p \in L_{1}$ and hence $F(q)-F(p) = 
qa-pa =(q-p)a = F(q-p) \in L_{2}$.

To prove the converse statement, we start by recalling that the map $f: T_{1}
\to T_{2}$ lifts to a continuous map $F: \mathbb{H} \to \mathbb{H}$, in such a way
that the diagram (\ref{diagramma}) commutes. 
Moreover the map $F$ is regular
since $f$ is a regular map of $T_1$ onto $T_2$.

For any $\lambda
\in L_{1}$, consider $G_{\lambda}(q)= F(q+\lambda) - F(q)$.
Since $F$ lifts a map between the quotients, $F$
maps $L_1$-equivalent points into $L_2$-equivalent points. 
Hence 
the image of $G_{\lambda}$ is contained in the (discrete, see Theorem \ref{discreteness}) lattice $L_{2}$
and, being continuous, is therefore constant. At this point it is clear that, taking the
Cullen derivative, we obtain $\partial_{c}F(q+\lambda)=\partial_{c}F(q)$,
for all $q\in \mathbb{H}$. Thus the map $\partial_{c}F$ is regular
(see Remark \ref{derivReg}) and $L_{1}$-periodic, which makes it bounded. By 
the Liouville Theorem for regular functions (see Theorem \ref{GS}) the Cullen
derivative $\partial_{c}F$ of $F$ is constant. Since $F$ expands as
a power series (see Theorem \ref{expansion})
$$
F(q)=\sum_{n\in \mathbb{N}}q^{n}\frac{1}{n!}\frac{\partial^{n} F}{\partial
x^{n}}(0)
$$ 
converging in the entire $\mathbb{H}$, we obtain (again by Theorem \ref{expansion})
$$
\partial_{c}F(q)=\sum_{n\in
\mathbb{N}}q^{n}\frac{1}{n!}\frac{\partial^{n+1} F}{\partial
x^{n+1}}(0) = \frac{\partial F}{\partial
x} (0)
$$
and hence
$$
F(q)=F(0) + q\frac{\partial F}{\partial
x} (0) = b+qa
$$
is a first degree regular polynomial. Again, since $F$ lifts a map
between quotients, necessarily $L_{1}a \subseteq L_{2}$. If the
inclusion $L_{1}a \subset L_{2}$ is proper, then $f$ is not
injective: indeed if some $q\in L_{2}$ satisfies $qa^{-1}\not \in
L_{1}$ then there exists $p_1, p\in L_1$ and $p_2\in L_2$ such that $(qa^{-1}+p_1)\notin L_{1}$ and $f(qa^{-1}+p_1) = \pi_{2}(q + p_1a +b) = \pi_{2}( p_2) 
= f(p)$, with $qa^{-1}+p_1\neq p$.

Now we know that $L_{1}a = L_{2}$, that is $L_{2}a^{-1}=L_{1}$. The
map $F^{-1}: \mathbb{H} \to \mathbb{H}$ defined by $F^{-1}(w) =
(w-b)a^{-1}$ induces the map $f^{-1}: T_{2} \to T_{1}$. Indeed
$$
f^{-1}(f(q+L_{1})) = f^{-1}(qa+b+L_{2}) = (qa+ L_{2})a^{-1} = q+ L_{2}a^{-1}
= q+L_{1}.
$$
This concludes the proof.
\end{proof}

\section{Equivalence of quaternionic tori}

To classify
the $4$-(real)-dimensional, quaternionic tori, up to biregular
diffeomorphisms, 
we start with the following:

\begin{teo} \label{teoequiv}
Two rank-$4$ lattices $L_{1}, \ L_{2}$ of  the space $\mathbb{H},$ generated
respectively by the bases $\{\alpha_1,\alpha_{2}, \alpha_{3}, \alpha_{4}\}$ and $\{\omega_1, \omega_{2}, 
\omega_{3}, \omega_{4}\}$, determine equivalent tori  $T_1,  T_2$ if and only if there
exist  $a\in \mathbb{H}^{*} = \mathbb{H}\setminus \{0\}$ and a linear transformation $A \in GL(4, \mathbb{Z})$ such that
\begin{equation*}
A \left( \begin{array}{l}
        \omega_1 \\
        \omega_{2} \\
	\omega_{3} \\
        \omega_{4} 
	\end{array} \right)
	= 
	\left( \begin{array}{l}
	     \alpha_1 \\
	     \alpha_{2} \\
	     \alpha_{3} \\
	     \alpha_{4} 
	     \end{array} \right)
	a
\end{equation*}
\end{teo}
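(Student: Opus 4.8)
The plan is to deduce the statement directly from Proposition~\ref{lifting} together with the elementary fact that two $\mathbb{R}$-bases generating one and the same rank-$4$ lattice are related by a matrix of $GL(4,\mathbb{Z})$.

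First I would prove the \emph{only if} direction. Assume $T_1$ and $T_2$ are equivalent. By Proposition~\ref{lifting} there is $F\in Aut(\mathbb{H})$, say $F(q)=qa+b$ with $a\in\mathbb{H}^*$, such that $F(L_1)=L_2$. Since $0\in L_1$ we get $b=F(0)\in L_2$; replacing $F$ by the automorphism $q\mapsto F(q)-b=qa$, which still sends $L_1$ onto $F(L_1)-b=L_2-b=L_2$, we may assume $F(q)=qa$. By Remark~\ref{omotetia} the map $q\mapsto qa$ is an $\mathbb{R}$-linear automorphism of $\mathbb{H}$, so $\{\alpha_1a,\alpha_2a,\alpha_3a,\alpha_4a\}$ is again an $\mathbb{R}$-basis, and it generates the lattice $L_1a=F(L_1)=L_2$. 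Thus $\{\alpha_1a,\dots,\alpha_4a\}$ and $\{\omega_1,\dots,\omega_4\}$ are two bases of the same rank-$4$ lattice, hence there is $A\in GL(4,\mathbb{Z})$ carrying the column $(\omega_1,\omega_2,\omega_3,\omega_4)$ to the column $(\alpha_1a,\alpha_2a,\alpha_3a,\alpha_4a)$. Because the entries of $A$ are integers, hence real and central in $\mathbb{H}$, this last column equals $(\alpha_1,\alpha_2,\alpha_3,\alpha_4)\,a$, which is exactly the asserted identity.

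For the converse, assume such $A\in GL(4,\mathbb{Z})$ and $a\in\mathbb{H}^*$ exist, and put $F(q)=qa\in Aut(\mathbb{H})$. Then $F(L_1)$ is the lattice generated by $\{\alpha_1a,\dots,\alpha_4a\}$; by hypothesis these are the entries of $A(\omega_1,\dots,\omega_4)^{\mathrm t}$, and since $A$ is invertible over $\mathbb{Z}$ they generate the same lattice as $\{\omega_1,\dots,\omega_4\}$, i.e.\ $F(L_1)=L_2$. Proposition~\ref{lifting} then yields a biregular diffeomorphism of $T_1$ onto $T_2$, so the two tori are equivalent.

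The only point that requires a little care is the non-commutativity of $\mathbb{H}$: one must notice that the left action of the integral matrix $A$ and the right multiplication by $a$ commute, which holds precisely because the entries of $A$ are real scalars; this is what allows one to slide $a$ out of the column $(\alpha_1a,\dots,\alpha_4a)$. Apart from this, and the standard reduction-theory fact that two bases of a given lattice differ by an element of $GL(4,\mathbb{Z})$, the argument is a faithful transcription of the classical complex computation, with $Aut(\mathbb{H})$ playing the role of the affine group of $\mathbb{C}$.
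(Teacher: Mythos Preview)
Your proof is correct and follows essentially the same approach as the paper: both directions rest on Proposition~\ref{lifting} together with the standard fact that two bases of a rank-$4$ lattice differ by an element of $GL(4,\mathbb{Z})$. The only cosmetic difference is that the paper spells out this last fact explicitly (constructing both change-of-basis matrices $A$ and $B$ and checking $AB=I_4$) and verifies the converse by a direct computation of $F(q+\alpha_i)-F(q)$, whereas you invoke the lattice-basis fact as known and appeal again to Proposition~\ref{lifting}; your remark on why the integral left action of $A$ commutes with right multiplication by $a$ is a useful clarification that the paper leaves implicit.
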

\begin{proof}
By Proposition \ref{lifting}, if $f$ is a biregular diffeomorphism of
$T_{1}$ onto $T_{2}$,  then there exists a biregular
transformation
$F$ of $\mathbb{H}$ such that the diagram (\ref{diagramma}) commutes.
Since $F$ is biregular on $\mathbb{H},$ then $F(q)=qa+b,$ with $a \in
\mathbb{H}^{*},$ and $b \in \mathbb{H}.$
As we pointed out in the proof of Proposition \ref{lifting}, without
loss of generality, we can
suppose both that $b=0$ and that
the function $F$ maps the set of generators of $L_{1}$ to a set of 
generators of $L_{2}.$ Taking into account that
\begin{equation*}
\left\{ \begin{array}{lll}
         F(\alpha_1) & = & \alpha_1a \\
	 F(\alpha_{2}) & = & \alpha_{2} a \\
         F(\alpha_{3}) & = & \alpha_{3} a \\
	 F(\alpha_{4}) & = & \alpha_{4} a
	 \end{array} \right.
    \end{equation*}    
there exists a matrix 
\begin{equation*}
A=\left( \begin{array}{llll}
          n_{11} & n_{12} & n_{13} & n_{14} \\
	  n_{21} & n_{22} & n_{23} & n_{24} \\
         n_{31} & n_{32} & n_{33} & n_{34} \\ 
	 n_{41} & n_{42} & n_{43} & n_{44}
	 \end{array} \right)
    \end{equation*}     
 with integer entries,   such that     
\begin{equation} \label{sistequiv}
\left\{ \begin{array}{llllllll}
                    \alpha_1a = & n_{11}\omega_1 & + & n_{12} \omega_{2} & + & n_{13} \omega_{3} & + &n_{14} \omega_{4} \\
	 \alpha_{2} a = & n_{21}\omega_1 & + & n_{22} \omega_{2} & + & n_{23} \omega_{3} & +
	 &n_{24} \omega_{4} \\
         \alpha_{3} a = & n_{31}\omega_1 & + & n_{32} \omega_{2} & + & n_{33} \omega_{3} & +
	 &n_{34} \omega_{4} \\ 
	 \alpha_{4} a = & n_{41}\omega_1 & + & n_{42} \omega_{2} & + & n_{43} \omega_{3} & +
	 &n_{44} \omega_{4} 
	 \end{array} \right.
    \end{equation}     
or, more concisely, 
 \begin{equation}\label{alpha=vdoppio}
     	\left( \begin{array}{l}
	     \alpha_1 \\
	     \alpha_{2} \\
	     \alpha_{3} \\
	     \alpha_{4} 
	     \end{array} \right)
	a=
A \left( \begin{array}{l}
        \omega_1 \\
        \omega_{2} \\
	\omega_{3} \\
        \omega_{4} 
	\end{array} \right).
\end{equation}
 The same argument applied in the opposite direction, implies the
 existence of a matrix $B$ with integer entries such that   
    \begin{equation}\label{vdoppio=alpha}
\left( \begin{array}{l}
        \omega_1 \\
        \omega_{2} \\
	\omega_{3} \\
        \omega_{4} 
	\end{array} \right)a^{-1}=
B	\left( \begin{array}{l}
	     \alpha_1 \\
	     \alpha_{2} \\
	     \alpha_{3} \\
	     \alpha_{4} 
	     \end{array} \right)
\end{equation}
    and hence, substituting equation (\ref{vdoppio=alpha}) into
    equation (\ref{alpha=vdoppio}), we get
     \begin{equation*}
     	\left( \begin{array}{l}
	     \alpha_1 \\
	     \alpha_{2} \\
	     \alpha_{3} \\
	     \alpha_{4} 
	     \end{array} \right)
	a=
A B	\left( \begin{array}{l}
	     \alpha_1 \\
	     \alpha_{2} \\
	     \alpha_{3} \\
	     \alpha_{4} 
	     \end{array} \right)a
\end{equation*}
which implies $AB=I_{4}$ and hence that $A$ (and $B$) is such that
$det(A)=\pm 1$, i.e. 
$A$ (and $B$) belongs to $GL(4, \mathbb{Z}).$ 

On the other side, suppose there exists a matrix $A \in GL(4,
\mathbb{Z}),$ of this form:
\begin{equation*}
A=\left( \begin{array}{llll}
          n_{11} & n_{12} & n_{13} & n_{14} \\
	  n_{21} & n_{22} & n_{23} & n_{24} \\
         n_{31} & n_{32} & n_{33} & n_{34} \\ 
	 n_{41} & n_{42} & n_{43} & n_{44}
	 \end{array} \right) 
    \end{equation*}     
such that 
\begin{equation*}
A \left( \begin{array}{l}
        \omega_1\\
        \omega_{2} \\
	\omega_{3} \\
        \omega_{4}
	\end{array} \right)
	= 
	\left( \begin{array}{l}
	\alpha_1\\
	     \alpha_{2} \\
	     \alpha_{3} \\
	     \alpha_{4}
	     \end{array} \right)
	a    
\end{equation*}
then we can compute $F(q)=qa$ in four different {ways}:
\begin{eqnarray*}
    F(q)=q\alpha_1^{-1}(n_{11}\omega_1 + n_{12} \omega_{2} + n_{13} \omega_{3} + n_{14} \omega_{4})\\
    F(q)=q\alpha_2^{-1}(n_{21}\omega_1 + n_{22} \omega_{2} + n_{23} \omega_{3} + n_{24} \omega_{4})\\
    F(q)=q\alpha_3^{-1}(n_{31}\omega_1 + n_{32} \omega_{2} + n_{33} \omega_{3} + n_{34} \omega_{4})\\
    F(q)=q\alpha_4^{-1}(n_{41}\omega_1 + n_{42} \omega_{2} + n_{43} \omega_{3} + n_{44} \omega_{4})
\end{eqnarray*}    
for all $q \in \mathbb{H}.$
Simple computations show that: 
\begin{equation*}
F(q+\alpha_1)=F(q) + n_{11}\omega_1 + n_{12} \omega_{2} + n_{13} \omega_{3} + n_{14} \omega_{4},
\end{equation*}
\begin{equation*}
F(q +\alpha_{2})=F(q) +  n_{21}\omega_1+n_{22} \omega_{2} + n_{23} \omega_{3} + n_{24} \omega_{4},
\end{equation*}
\begin{equation*}
F(q +\alpha_{3})=F(q) +n_{31}\omega_1+ n_{32} \omega_{2} + n_{32} \omega_{3} + n_{34} \omega_{4},
\end{equation*}
\begin{equation*}
F(q +\alpha_{4})=F(q) +n_{41}\omega_1+  n_{42} \omega_{2} + n_{43} \omega_{3} + n_{44} \omega_{4}.
\end{equation*}
Hence $F$ defines a biregular diffeomorphism $f$ between $T_{1}$ and $T_{2}.$
\end{proof}

It is natural at this point to give the following

\begin{definiz} \label{equivlattices}
Two rank-$4$ lattices $L_{1}, \ L_{2}$ of  the space $\mathbb{H}$ are called \emph{equivalent} if the generated quaternionic tori $\mathbb{H}/ L_1$ and $\mathbb{H}/ L_2$ are equivalent. A basis $\{\omega_1, \omega_{2}, 
\omega_{3}, \omega_{4}\}$
of a rank-$4$ lattice $L_{1} $ and a basis $\{\alpha_1,\alpha_{2}, \alpha_{3}, \alpha_{4}\}$  of a rank-$4$ lattice $L_{2}$ are called \emph{equivalent} if $L_1$ and $L_2$ are equivalent lattices, i.e. if (according to Theorem \ref{teoequiv}) there
exist  $a\in \mathbb{H}^{*}$ and a linear transformation $A \in GL(4, \mathbb{Z})$ such that
\begin{equation}\label{formulaequivalenza}
A \left( \begin{array}{l}
        \omega_1 \\
        \omega_{2} \\
	\omega_{3} \\
        \omega_{4} 
	\end{array} \right)
	= 
	\left( \begin{array}{l}
	     \alpha_1 \\
	     \alpha_{2} \\
	     \alpha_{3} \\
	     \alpha_{4} 
	     \end{array} \right)
	a.
\end{equation}
\end{definiz}

Notice that two (different) equivalent bases $\{\omega_1, \omega_{2}, 
\omega_{3}, \omega_{4}\}$   and $\{\alpha_1,\alpha_{2}, \alpha_{3}, \alpha_{4}\}$ of rank-$4$ lattices may generate exactly the same lattice, and hence exactly the same quaternionic torus. This happens when there
exists a linear transformation $A \in GL(4, \mathbb{Z})$ such that
\begin{equation*}
A \left( \begin{array}{l}
        \omega_1 \\
        \omega_{2} \\
	\omega_{3} \\
        \omega_{4} 
	\end{array} \right)
	= 
	\left( \begin{array}{l}
	     \alpha_1 \\
	     \alpha_{2} \\
	     \alpha_{3} \\
	     \alpha_{4} 
	     \end{array} \right)
\end{equation*}
i.e., when $a=1$ in \eqref{formulaequivalenza}.

\section{Minkowski-Siegel Reduction Algorithm: reduced and special bases}\label{sei}

In this section we will specialize to the quaternionic setting the general Minkowski-Siegel Reduction Algorithm presented in \cite[Section 4]{igusa}, \cite[Section 9]{maas}, 
and use it to construct \emph{reduced Gram matrices} and \emph{reduced} bases associated to lattices. In turn, reduced bases will be used to find \emph{special} bases for lattices, useful in the sequel to identify and parameterize equivalence classes of quaternionic tori.

    We explicitly present here some basic facts of this algorithmic construction, both to make the paper as much 
    self-contained as possible, and to have a starting point for the proofs of the
    results that will follow. 
    
    Let $\langle \cdot, \cdot \rangle$ denote the usual scalar product 
    of $\mathbb{R}^{4}$. Let $p=x_0+x_1i+x_2j+x_3k$ and $q=y_0+y_1i+y_2j+y_3k$ be two quaternions. We set, and use in what follows,  
    \begin{equation}\label{scalare}
    \langle p, q \rangle = \sum_{\ell=0}^3 x_\ell y_\ell.
   \end{equation}  
    Let $\{v_1, v_2, v_3, v_4\}$ be a basis of the lattice $L\subset \mathbb{H}\cong \mathbb{R}^4$. For any $u=(n_{1}, n_{2}, n_{3}, n_{4})\in
    \mathbb{Z}^{4}$ the squared norm of the element
    $v=n_{1}v_1+n_{2}v_{2}+n_{3}v_{3}+n_{4}v_{4}\in L$ can be expressed
    by $\langle v, v \rangle= v\ ^{t}v= uS_{0}\ ^{t}u$ where the
    matrix 
    \begin{equation}
	S_{0}= \left( \begin{array}{llll}
          \langle v_{1}, v_{1} \rangle & \langle v_{1}, v_{2} \rangle &\langle v_{1},
	  v_{3} \rangle & \langle v_{1}, v_{4} \rangle \\
	  \langle v_{2}, v_{1} \rangle & \langle v_{2}, v_{2} \rangle & \langle
	  v_{2}, v_{3} \rangle &
	 \langle v_{2}, v_{4} \rangle \\
         \langle v_{3}, v_{1} \rangle &\langle v_{3}, v_{2} \rangle & \langle
	 v_{3}, v_{3} \rangle & \langle v_{3}, v_{4} \rangle \\ 
	 \langle v_{4}, v_{1} \rangle & \langle v_{4}, v_{2} \rangle & \langle
	 v_{4}, v_{3} \rangle &\langle v_{4}, v_{4} \rangle
	 \end{array} \right) 
\end{equation}
    is symmetric and positive definite, and is usually called the
    \emph{Gram matrix} associated to the basis $\{v_{1}, v_{2}, v_{3}, v_{4}\}$.
    In this setting and with the notations established, we will use the following procedure (see, e.g., \cite{maas}, page 122):
    \begin{kowski}[\bf Minkowski-Siegel Reduction Algorithm]\label{kowski}
    \end{kowski}
    \noindent This algorithm acts on a Gram matrix $S_0$ 
and produces a matrix $U=U(S_0)$ belonging to $GL(4, \mathbb Z)$ and a Gram matrix $R=R(S_0)=US_0\ ^tU$. The produced matrix $R$ has, and is in fact characterized by, the properties which follow.

Here are the steps of the algorithm:
    \begin{itemize}
    \item The Gram matrix $S_0$  (of a certain basis $\{v_1, v_2, v_3, v_4\}$) is given.
   \item Consider the function $Q_{1}: \mathbb{Z}^{4} \to \mathbb{R}^+$
    defined as
    \begin{equation*}
Q_{1}(u)= uS_{0}\ ^{t}u.
\end{equation*}
    By our assumption, $Q_{1}$ is (the restriction to $\mathbb Z^4$ of) a positive definite quadratic form, and hence it attains its strictly positive minimum value at a
    point  $u_{1}=(n_{11}, n_{12}, n_{13}, n_{14})\in \mathbb{Z}^{4}$. 
    \item To proceed,  we need 
    to recall that there exist infinitely many matrices of $GL(4,
    \mathbb{Z})$ having the first row equal to $u_{1}$ (see e.g. \cite[Section 4, pages 191-192]{igusa}, 
    \cite[Section 9, pages 122-123]{maas} 
    for a proof of this assertion, and of the analogous ones, used in this algorithm).
    With this in mind, we consider the
    function $Q_{2}$ obtained by restricting $Q_{1}$ to the elements
     $u\in \mathbb{Z}^{4}$ such that there exists a matrix of $GL(4,
    \mathbb{Z})$ having the first two rows equal to $u_{1}$  and $u$,
    respectively. Let  $u_{2}= (n_{21}, n_{22}, n_{23},
    n_{24})\in \mathbb{Z}^{4}$
    be a point in which $Q_{2}$ attains its strictly  positive minimum
    value.
    Up to a
    change of sign, we can assume that $u_{1}S_{0}\ ^{t}u_{2}\geq 0$. 
   \item In the next step, we consider the restriction $Q_{3}$ of
    $Q_{2}$ to the elements
     $u\in \mathbb{Z}^{4}$ such that there exists a matrix of $GL(4,
    \mathbb{Z})$ having the first
    three rows equal to $u_{1}$, $u_{2}$ and
    $u$, respectively. Let  $u_{3}= (n_{31}, n_{32}, n_{33},
    n_{34})\in \mathbb{Z}^{4}$
    be a point in which $Q_{3}$ attains its strictly  positive minimum
    value. Again,  up to a
    change of sign, we can assume that $u_{2}S_{0}\ ^{t}u_{3}\geq 0$. 
   \item  Finally, we take the restriction $Q_{4}$ of $Q_{3}$ to the elements
     $u\in \mathbb{Z}^{4}$ such that there exists a matrix of $GL(4,
    \mathbb{Z})$ having the four rows equal to $u_{1}$, $u_{2}$,
    $u_{3}$ and
    $u$, respectively, and set  $u_{4}= (n_{41}, n_{42}, n_{43},
    n_{44})\in \mathbb{Z}^{4}$
    to be a point in which $Q_{4}$ has a strictly  positive minimum
    value. As before, we can assume $u_{3}S_{0}\ ^{t}u_{4}\geq 0$.
    \item The output of the algorithm consists of the matrix 
     \begin{equation*}
U=U(S_0)=\left( \begin{array}{llll}
    n_{11} & n_{12} & n_{13} & n_{14} \\
	  n_{21} & n_{22} & n_{23} & n_{24} \\
    n_{31} & n_{32} & n_{33} & n_{34} \\ 
	 n_{41} & n_{42} & n_{43} & n_{44}
	 \end{array} \right),
    \end{equation*}
(belonging to $GL(4, \mathbb{Z})$ by construction) and of the Gram matrix $R=US_{0}\ ^{t}U$.
  \end{itemize}  
  \vskip .5cm
As we mentioned, the importance of the Minkowski-Siegel Reduction Algorithm stays in the features of the matrices ($U$ and) $R$ that it produces; in fact, following   \cite[Section 4]{igusa} and \cite[Section 9]{maas}, we can give the next definition. 
 \begin{definiz}\label{reducedGram} If $R$ is a Gram matrix obtained by applying to a given Gram matrix $S_0$ the Minkowski-Siegel Reduction Algorithm  \ref{kowski}, then $R$ is called a \emph{reduced Gram matrix (relative to $S_0$)}. The symbol $\mathcal R$ will denote the set of all reduced Gram matrices.
 \end{definiz}
It turns out that there are two necessary and sufficient conditions that characterize the elements of the set  $\mathcal{R}$ of reduced Gram matrices;  we recall these conditions here (see \cite[equations (1), page 123]{maas}): 
\begin{prop}\label{cnsfundaregion} A Gram matrix $R=(r_{i,j})_{i,j=1, \cdots 4}$ is a reduced Gram matrix if and only if the two following sets of conditions hold:
\begin{itemize} 
\item[B1)] $r_{k,k+1} \ge 0$ for all $k=1, 2, 3;$ \\
\item[B2)] for all 
fixed $k=1,2,3,4$, we have $(n_1,n_2,n_3,n_4) \  R \ \ ^t(n_1,n_2,n_3,n_4) \ge r_{k,k}$  for any integer vector 
$(n_1, n_2, n_3, n_4)$ such that $n_k, \cdots n_4$ are without common divisors.
\end{itemize}
\end{prop}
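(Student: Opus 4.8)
The plan is to prove the two implications separately, the common engine being an elementary fact about extending partial bases of $\mathbb Z^4$ to unimodular matrices. Precisely, I would first record the following lemma: if $U\in GL(4,\mathbb Z)$ has rows $u_1,\dots,u_4$ and $w=n_1u_1+\cdots+n_4u_4$ with $n_i\in\mathbb Z$, then $\{u_1,\dots,u_{k-1},w\}$ occurs as the first $k$ rows of some matrix of $GL(4,\mathbb Z)$ if and only if $\gcd(n_k,n_{k+1},\dots,n_4)=1$ (for $k=4$ this reads $n_4=\pm1$, and for $k=1$ it reads $\gcd(n_1,\dots,n_4)=1$). Multiplying on the right by $U^{-1}\in GL(4,\mathbb Z)$ reduces this to the case $u_i=e_i$, $w=(n_1,\dots,n_4)$, where passing to the quotient $\mathbb Z^4/\langle e_1,\dots,e_{k-1}\rangle\cong\mathbb Z^{5-k}$ turns it into the standard statement that a vector of $\mathbb Z^{m}$ belongs to a $\mathbb Z$-basis exactly when its entries are coprime. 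The point of the lemma is that it identifies the domain $D_k\subseteq\mathbb Z^4$ on which the form $Q_k$ of Algorithm \ref{kowski} is defined, once $u_1,\dots,u_{k-1}$ have been chosen: writing $w=\sum n_iu_i$, one has $w\in D_k\iff\gcd(n_k,\dots,n_4)=1$, and there $Q_k(w)=Q_1(w)=wS_0\,{}^{t}w$.

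For the first implication, let $R=R(S_0)=US_0\,{}^{t}U$ be a matrix produced by Algorithm \ref{kowski}. Condition B1) is immediate: by construction the algorithm fixes the signs of $u_2,u_3,u_4$ so that $r_{k,k+1}=u_kS_0\,{}^{t}u_{k+1}\ge0$ for $k=1,2,3$. For B2), fix $k$ and an integer vector $(n_1,\dots,n_4)$ with $\gcd(n_k,\dots,n_4)=1$, and set $w=\sum n_iu_i$, so that $(n_1,\dots,n_4)\,R\,{}^{t}(n_1,\dots,n_4)=wS_0\,{}^{t}w=Q_1(w)$. By the lemma $w\in D_k$, hence $Q_1(w)=Q_k(w)\ge\min_{D_k}Q_k=Q_k(u_k)=u_kS_0\,{}^{t}u_k=r_{k,k}$, which is exactly B2).

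For the converse I would apply Algorithm \ref{kowski} to the input $S_0:=R$ and check that the sequence of choices $u_k:=e_k$ is legitimate at every stage; this forces $U=I_4$ and hence output $R(R)=I_4\,R\,{}^{t}I_4=R$, so that $R\in\mathcal R$. At stage $k$ the rows fixed so far are $e_1,\dots,e_{k-1}$, so by the lemma $D_k=\{(n_1,\dots,n_4):\gcd(n_k,\dots,n_4)=1\}$ and $Q_k=Q_1$ on $D_k$. For $k=1$ every nonzero integer vector $u$ is $d\,u'$ with $d\ge1$ and $u'$ primitive, whence $Q_1(u)=d^2Q_1(u')\ge Q_1(u')\ge r_{1,1}=Q_1(e_1)$ by B2) with $k=1$; thus $e_1$ minimizes $Q_1$. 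For $k\ge2$, B2) with index $k$ says precisely $Q_1(n)\ge r_{k,k}=Q_k(e_k)$ for every $n\in D_k$, so $e_k$ minimizes $Q_k$; moreover the only sign condition the algorithm imposes at this stage, $u_{k-1}R\,{}^{t}u_k=r_{k-1,k}\ge0$, holds by B1). Therefore $u_k=e_k$ is admissible throughout and the conclusion follows.

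I expect the only genuinely delicate point to be the unimodular-extension lemma, together with the observation that its coprimality clause matches, verbatim, the hypothesis of B2) once the algorithm has been steered into choosing $u_i=e_i$; everything else is bookkeeping, the main bridge in the first implication being the substitution $(n_1,\dots,n_4)\mapsto w=(n_1,\dots,n_4)\,U$ that rewrites $(n_1,\dots,n_4)\,R\,{}^{t}(n_1,\dots,n_4)$ as a value of $Q_1$. Minor care is needed at the degenerate endpoint $k=4$, where ``no common divisor of $n_4$'' must be read as $n_4=\pm1$, and in noting that the sign changes performed by the algorithm alter neither the diagonal entries $r_{k,k}$ nor the domains $D_{k+1}$.
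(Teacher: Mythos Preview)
Your argument is correct. The paper does not actually prove this proposition: it merely recalls the characterization from \cite[equations~(1), page~123]{maas}, so there is no ``paper's own proof'' to compare against beyond the reference. What you supply is the standard argument, and it is the right one: the crux is exactly your extension lemma identifying the domain of $Q_k$ (once $u_1,\dots,u_{k-1}$ are fixed) with the set of integer vectors whose last $5-k$ coordinates in the $u$-basis are coprime, which is how the B2) hypothesis is engineered to read. Both implications then fall out as you indicate --- the forward one by the minimality of $u_k$ on $D_k$, the converse by feeding $S_0=R$ back into Algorithm~\ref{kowski} and steering it to $U=I_4$. The only remarks I would add are cosmetic: it is worth stating explicitly that a ``reduced'' matrix is, by Definition~\ref{reducedGram}, any output of the (nondeterministic) algorithm for \emph{some} input and \emph{some} choices, which is why exhibiting one legal run with output $R$ suffices; and your handling of the boundary case $k=4$ (reading the gcd condition as $n_4=\pm1$) matches the paper's later use in B2)$'$.
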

\noindent We point out, and we will use it in the sequel, the fact that conditions B2) are equivalent to B2)$^\prime$:
\begin{itemize}
\item[B2)$^\prime$] for all fixed $k=1,2,3,4$, if a vector $(n_1,n_2,n_3,n_4)\in \mathbb{Z}^4$ has the property that $(n_1,n_2,n_3,n_4)\  R \  ^t(n_1,n_2,n_3,n_4)<r_{k,k} ,$ then necessarily $n_{k}, \cdots, n_4$ have common divisors.
\end{itemize}

   \begin{oss}\label{chiave} Let $L$ be a rank-$4$ lattice. Consider  any basis $\{v_{1}, v_{2}, v_{3}, v_{4}\}$ of $L$ whose Gram matrix is $S_0$. The Minkowski-Siegel Reduction Algorithm, applied to the Gram matrix $S_0$, produces a matrix $U$ of $GL(4, \mathbb{Z})$ which can be used to define the four elements 
        \begin{equation}
\left\{ \begin{array}{llllllll}
\omega_{1}=& n_{11}v_1 & + & n_{12} v_2 & + & n_{13} v_3 & + &n_{14} v_4\\
 \omega_{2}= & n_{21}v_1 & + & n_{22} v_{2} & + & n_{23} v_{3} & +
	 &n_{24} v_{4} \\
         \omega_{3} = & n_{31}v_1 & + & n_{32} v_{2} & + & n_{33} v_{3} & +
	 &n_{34} v_{4} \\ 
	 \omega_{4} = & n_{41}v_1 & + & n_{42} v_{2} & + & n_{43} v_{3} & +
	 &n_{44} v_{4} .
	 \end{array} \right.
	 \end{equation}
   The elements $\{\omega_{1}, \omega_{2},\omega_{3}, \omega_{4}\}$
   form a basis of $L$ since the matrix $U$ 
   belongs to $GL(4, \mathbb{Z})$, with its inverse. 
 Notice that 
    \begin{equation*}
    U
\left( \begin{array}{l}
          v_1 \\
	  v_{2} \\
         v_{3} \\ 
	 v_{4}
	 \end{array} \right) =
\left( \begin{array}{l}
    \omega_1 \\
	  \omega_{2} \\
         \omega_{3} \\ 
	 \omega_{4}
	 \end{array} \right)
 \end{equation*}
and therefore that the two bases $\{  v_1, v_{2}, v_{3}, v_{4}\}$ and $\{  \omega_1, \omega_{2}, \omega_{3}, \omega_{4}\}$ are equivalent (in particular they generate the same lattice).
We conclude the remark by pointing out that the Gram matrix $R$ associated to the basis
   $\{\omega_{1}, \omega_{2},\omega_{3}, \omega_{4}\}$ is obtained
   as (recall formula \eqref{scalare}): 
   $$
  US_{0}\ ^{t}U= R=\left( \begin{array}{llll}
          \langle \omega_{1}, \omega_{1} \rangle & \langle \omega_{1}, \omega_{2} \rangle &\langle \omega_{1},
	  \omega_{3} \rangle & \langle \omega_{1}, \omega_{4} \rangle \\
	  \langle \omega_{2}, \omega_{1} \rangle & \langle \omega_{2}, \omega_{2} \rangle & \langle
	  \omega_{2}, \omega_{3} \rangle &
	 \langle \omega_{2}, \omega_{4} \rangle \\
         \langle \omega_{3}, \omega_{1} \rangle &\langle \omega_{3}, \omega_{2} \rangle & \langle
	 \omega_{3}, \omega_{3} \rangle & \langle \omega_{3}, \omega_{4} \rangle \\ 
	 \langle \omega_{4}, \omega_{1} \rangle & \langle \omega_{4}, \omega_{2} \rangle & \langle
	 \omega_{4}, \omega_{3} \rangle &\langle \omega_{4}, \omega_{4} \rangle
	 \end{array} \right) 
   $$
   and is therefore independent of the choice of a particular basis $\{  v_1, v_{2}, v_{3}, v_{4}\}$ among those that have the same Gram matrix $S_0$. In fact, in this sense,  the matrix $R$ depends only on the Gram matrix $S_0$.
   \end{oss}
 
To classify rank-$4$ lattices and generated tori, we will define the set of bases naturally emphasized by Algorithm \ref{kowski}.

\begin{definiz}  Let $L$ be a rank-$4$ lattice in $\mathbb{H}$. A basis $\{\omega_1, \omega_{2},\omega_{3}, \omega_{4}\}$ of $L$  will be called a \emph{reduced basis} if its Gram matrix is reduced.
 \end{definiz}

A direct application of the Minkowski-Siegel Reduction Algorithm and Remark \ref{chiave} prove a first result in the study of equivalence of lattices.

\begin{teo}\label{baseridotta}
Let $L$ be a rank-$4$ lattice and let $\{v_{1}, v_{2}, v_{3}, v_{4}\}$ be a basis of $L$. Then there exists a matrix $U$ of $GL(4, \mathbb{Z})$ such that
 \begin{equation*}
    U
\left( \begin{array}{l}
          v_1 \\
	  v_{2} \\
         v_{3} \\ 
	 v_{4}
	 \end{array} \right) =
\left( \begin{array}{l}
    \omega_1 \\
	  \omega_{2} \\
         \omega_{3} \\ 
	 \omega_{4}
	 \end{array} \right)
 \end{equation*}
is a reduced basis of the lattice $L$. As a consequence $L=\mathbb{Z}v_1+\mathbb{Z}v_2+\mathbb{Z}v_3+\mathbb{Z}v_4=\mathbb{Z}\omega_1+\mathbb{Z}\omega_2+\mathbb{Z}\omega_3+\mathbb{Z}\omega_4$ and we can suppose that the lattice $L$ is generated by a reduced basis.
\end{teo}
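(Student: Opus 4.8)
The plan is to read Theorem \ref{baseridotta} as a direct corollary of the Minkowski-Siegel Reduction Algorithm \ref{kowski} together with Remark \ref{chiave}, so essentially no new work is required beyond assembling the pieces already in place. First I would start from the given basis $\{v_1, v_2, v_3, v_4\}$ of the rank-$4$ lattice $L$ and form its Gram matrix $S_0$, which is symmetric and positive definite by construction. I would then invoke Algorithm \ref{kowski} applied to $S_0$: this produces a matrix $U=U(S_0) \in GL(4,\mathbb{Z})$ and a reduced Gram matrix $R=R(S_0)=US_0\,{}^tU \in \mathcal{R}$ (reduced in the sense of Definition \ref{reducedGram}, equivalently satisfying conditions B1) and B2) of Proposition \ref{cnsfundaregion}).

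Next I would define the four elements $\omega_1, \omega_2, \omega_3, \omega_4$ exactly as in Remark \ref{chiave}, namely $\omega_m = \sum_{\ell=1}^4 n_{m\ell} v_\ell$ where $(n_{m\ell})$ are the entries of $U$; in matrix form this is precisely $U\,{}^t(v_1,v_2,v_3,v_4) = {}^t(\omega_1,\omega_2,\omega_3,\omega_4)$. Since $U \in GL(4,\mathbb{Z})$ has integer inverse, Remark \ref{chiave} already tells us that $\{\omega_1,\omega_2,\omega_3,\omega_4\}$ is again a basis of $L$, and that its Gram matrix is exactly $R = US_0\,{}^tU$. Because $R$ is reduced, $\{\omega_1,\omega_2,\omega_3,\omega_4\}$ is a reduced basis of $L$ by the definition of reduced basis given just before the theorem.

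Finally I would record the consequence: since $U$ and $U^{-1}$ both have integer entries, the two lattices $\mathbb{Z}v_1+\mathbb{Z}v_2+\mathbb{Z}v_3+\mathbb{Z}v_4$ and $\mathbb{Z}\omega_1+\mathbb{Z}\omega_2+\mathbb{Z}\omega_3+\mathbb{Z}\omega_4$ coincide, both being equal to $L$; this is the content of the equivalence of bases discussed in Remark \ref{chiave} with $a=1$. Hence we may always assume a rank-$4$ lattice is generated by a reduced basis. There is really no serious obstacle here: the only thing one must be careful about is that Algorithm \ref{kowski} is well-posed, i.e.\ that the successive restricted quadratic forms $Q_1, \dots, Q_4$ genuinely attain strictly positive minima on the relevant subsets of $\mathbb{Z}^4$ — but this is guaranteed by positive definiteness of $S_0$ together with the corollary stating that $\overline{B(0,R)} \cap L$ is finite for every $R$, and in any case is part of the algorithm as stated (and proved in \cite{igusa, maas}). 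So the proof is a short paragraph citing Algorithm \ref{kowski} and Remark \ref{chiave}.
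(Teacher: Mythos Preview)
Your proposal is correct and matches the paper's approach exactly: the paper itself simply states that the theorem follows from ``a direct application of the Minkowski-Siegel Reduction Algorithm and Remark \ref{chiave},'' which is precisely the assembly of pieces you describe. Your write-up is in fact more detailed than the paper's one-line justification, but the underlying argument is identical.
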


We will now present the basic features of reduced Gram matrices (and bases). 
\begin{prop} \label{condizio1/2}
If $R=(r_{i,j})_{i,j=1, \cdots 4} $ is a reduced Gram matrix, then the two following conditions hold:
\begin{enumerate}
\item $r_{k,k} \leq r_{l,l}$, for all $(k,l)\in \{1,2,3,4\} \times \{1,2,3,4\}$ such that $l>k$;
\item $-\frac{1}{2} r_{l,l} \leq r_{k,l} \leq \frac{1}{2} r_{l,l}$, for all $(k,l)\in \{1,2,3,4\} \times \{1,2,3,4\}$ such that $l\neq k$.
\end{enumerate}
\end{prop}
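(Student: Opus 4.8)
The plan is to deduce both inequalities from the characterization of reduced Gram matrices recalled in Proposition \ref{cnsfundaregion}: only the conditions B2) are needed, evaluated on a handful of carefully chosen integer vectors, together with the symmetry $r_{a,b}=r_{b,a}$ of $R$. Throughout, for $m=1,2,3,4$ let $e_{m}\in\mathbb{Z}^{4}$ denote the $m$-th standard basis (row) vector, so that $e_{m}R\ ^{t}e_{m}=r_{m,m}$, and more generally $(e_{a}\pm e_{b})R\ ^{t}(e_{a}\pm e_{b})=r_{a,a}+r_{b,b}\pm 2r_{a,b}$.

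First I would prove (1). Fix $k<l$ and apply condition B2) with fixed index $k$ to the vector $u=e_{l}$: since $l>k$, among the entries of $e_{l}$ occupying the positions $k,k+1,\dots,4$ there is the entry $1$ (in position $l$), so these entries have no common divisor. Hence B2) gives $r_{l,l}=e_{l}R\ ^{t}e_{l}\ge r_{k,k}$, which is exactly (1).

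Next I would prove (2). Given $k\neq l$, set $a=\min(k,l)$ and $b=\max(k,l)$, so that $a<b$ and $r_{k,l}=r_{a,b}$. The entries of $e_{a}+e_{b}$, and likewise those of $e_{a}-e_{b}$, occupying the positions $b,b+1,\dots,4$ are $\pm 1,0,\dots,0$, hence have no common divisor; therefore condition B2) with fixed index $b$ applies to both vectors and yields
\begin{align*}
r_{a,a}+r_{b,b}+2r_{a,b}&=(e_{a}+e_{b})R\ ^{t}(e_{a}+e_{b})\ge r_{b,b},\\
r_{a,a}+r_{b,b}-2r_{a,b}&=(e_{a}-e_{b})R\ ^{t}(e_{a}-e_{b})\ge r_{b,b},
\end{align*}
that is, $|r_{a,b}|\le\frac{1}{2}r_{a,a}$. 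If $k<l$ then $a=k$ and $b=l$, and combining with part (1), which gives $r_{k,k}\le r_{l,l}$, we obtain $|r_{k,l}|\le\frac{1}{2}r_{k,k}\le\frac{1}{2}r_{l,l}$; if instead $k>l$ then $a=l$ and $b=k$, so directly $|r_{k,l}|=|r_{a,b}|\le\frac{1}{2}r_{a,a}=\frac{1}{2}r_{l,l}$. In either case $-\frac{1}{2}r_{l,l}\le r_{k,l}\le\frac{1}{2}r_{l,l}$, which is (2).

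I do not expect a genuine obstacle here: once the right test vectors are chosen the argument is pure index bookkeeping, and condition B1) is not even used. The one point deserving a moment's care is that (2) must be phrased in terms of $r_{l,l}$, whereas for a super-diagonal entry ($k<l$) condition B2) only produces $|r_{k,l}|\le\frac{1}{2}r_{k,k}$; part (1) is then genuinely needed to upgrade this to $|r_{k,l}|\le\frac{1}{2}r_{l,l}$. For a sub-diagonal entry ($k>l$), condition B2) already delivers the stated bound directly.
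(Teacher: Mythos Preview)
Your proof is correct and follows essentially the same approach as the paper's: both deduce (1) from condition B2) applied to $e_{l}$ and (2) from B2) applied to $e_{k}\pm e_{l}$. There is one minor difference worth noting. For (2) you apply B2) with fixed index $b=\max(k,l)$, which yields $|r_{k,l}|\le\frac{1}{2}r_{\min(k,l),\min(k,l)}$ and then forces you to invoke part (1) when $k<l$; the paper instead applies B2) with fixed index $k$ in all cases (the entries of $e_{k}\pm e_{l}$ in positions $k,\dots,4$ always include a $\pm 1$, whether $l>k$ or $l<k$), which gives $r_{k,k}+r_{l,l}\pm 2r_{k,l}\ge r_{k,k}$ and hence $|r_{k,l}|\le\frac{1}{2}r_{l,l}$ directly. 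So your remark that ``part (1) is then genuinely needed to upgrade this'' is not quite right: a better choice of index in B2) avoids the detour entirely.
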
 
\begin{proof} To verify $(1)$, we use condition B2) at step $k$, applied to the vector $e_l=(n_1,n_2,n_3,n_4)$ ($l>k$)  where $e_l$ is the $l$-th vector  of the standard basis of $\mathbb{R}^4$. Inequalities $(2)$ are obtained by applying the same condition B2) at step $k$, to the vector $(n_1,n_2,n_3,n_4)=e_k\pm e_l$ ($l\neq k$), where $e_l,e_k$ are, respectively, the $l$-th and $k$-th vector  of the standard basis of $\mathbb{R}^4$ (see also \cite[page 123]{maas}).
\end{proof}
\begin{oss} Concerning conditions B2) on the Gram matrix $R=(r_{i,j})_{i,j=1, \cdots 4}$, we observe that:  for each $k\in \{1,2,3,4\}$, if $e_k$ is the $k$-th element of the standard basis of $\mathbb{R}^4$,
 then we obtain the obvious equality $e_k R\ ^te_k=r_{k,k}$ that gives no conditions.
\end{oss}

We restate here a deep result that appears in \cite[theorem stated at page 139]{maas}, adapting it to our setting and notations.

\begin{teo} \label{generale} The set $\mathcal R$ of all reduced Gram matrices is a convex cone in $\mathbb R^{10}$ with $\mathring{\mathcal R} \neq \emptyset$. If $\mathcal G$ denotes the set of all Gram matrices, then 
\begin{equation}
\mathcal G= \bigcup_{U\in GL(4, \mathbb{Z})} U  \mathcal{R} \ ^tU.
\end{equation}
If $U\in GL(4, \mathbb{Z}), U\neq \pm I_4$, and $\mathcal R \cap (U \mathcal{R}\ ^tU) \neq \emptyset$, then $\mathcal R \cap (U \mathcal{R} ^tU) \subset \partial \mathcal R$. Only for a finite set of matrices $U\in GL(4, \mathbb{Z})$ it is possible that $\mathcal R \cap (U  \mathcal{R}  ^tU) \neq \emptyset$.
\end{teo}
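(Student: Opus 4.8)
The statement comprises four claims: that $\mathcal R$ is a convex cone in the $10$-dimensional space of symmetric $4\times 4$ real matrices (which I will write $\mathrm{Sym}_4$) with non-empty interior; that $\mathcal G=\bigcup_U U\mathcal R\,{}^{t}U$; that an overlap $\mathcal R\cap(U\mathcal R\,{}^{t}U)$ with $U\neq\pm I_4$, if non-empty, lies in $\partial\mathcal R$; and that only finitely many $U\in GL(4,\mathbb Z)$ produce a non-empty overlap. The first two are within reach of the material already set up, the last two carry the weight, and for the very last one I would in the end defer to \cite[\S4]{igusa}, \cite[\S9]{maas}.

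For the first claim, recall from Proposition \ref{cnsfundaregion} that $\mathcal R$ is carved out of the open convex cone of positive definite matrices by the conditions B1) and B2)$^{\prime}$, each of which is a \emph{linear} inequality in the entries $r_{ij}$ of $R$ — B1) reads $r_{k,k+1}\geq 0$, and B2) reads $\sum_{i,j}n_in_jr_{ij}\geq r_{k,k}$ for fixed integer data — and all of them are homogeneous of degree one; hence $\mathcal R$ is a convex cone. To see $\mathring{\mathcal R}\neq\emptyset$ I would exhibit a point explicitly: for $D=\mathrm{diag}(1,2,4,8)$ a direct check shows that the only equality cases of B2) are the trivial identities $e_kD\,{}^{t}e_k=r_{k,k}$, so a small perturbation of $D$ with strictly positive superdiagonal makes B1) strict and keeps all B2)-inequalities strict — only integer vectors $u$ of bounded norm can fail strictness, since $uR\,{}^{t}u\geq\lambda_{\min}(R)|u|^2$ — whence a whole neighbourhood stays in $\mathcal R$. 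For the second claim, the inclusion $U\mathcal R\,{}^{t}U\subseteq\mathcal G$ is clear because $X\mapsto UX\,{}^{t}U$ preserves symmetry and positive-definiteness, while the reverse inclusion is exactly the assertion that Algorithm \ref{kowski}, applied to any $S_0\in\mathcal G$, returns $U\in GL(4,\mathbb Z)$ with $US_0\,{}^{t}U\in\mathcal R$ (Remark \ref{chiave} and Definition \ref{reducedGram}), so that $S_0\in U^{-1}\mathcal R\,{}^{t}(U^{-1})$.

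For the third claim I would prove the sharper rigidity statement: if $R\in\mathring{\mathcal R}$, $S\in\mathcal R$, $U\in GL(4,\mathbb Z)$ and $R=US\,{}^{t}U$, then $U=\pm I_4$; the contrapositive then gives the claim. Writing $V=U^{-1}=(v_{ij})$ with rows $v_1,\dots,v_4$, one has $s_{k,k}=v_kR\,{}^{t}v_k$ and, symmetrically, $r_{k,k}=u_kS\,{}^{t}u_k$ with $u_k$ the $k$-th row of $U$. I argue by induction on $k=1,2,3,4$: assuming $v_1,\dots,v_{k-1}$ and $u_1,\dots,u_{k-1}$ are already among $\pm e_1,\dots,\pm e_{k-1}$, the resulting block-triangular shape of $V$ and of $U=V^{-1}$, together with $\det V=\pm 1$, forces the entries in positions $k,\dots,4$ of $v_k$ (and of $u_k$) to have no common divisor; thus $v_k$ is an admissible test vector at level $k$ for $R$ and $u_k$ at level $k$ for $S$, so B2) yields $s_{k,k}=v_kR\,{}^{t}v_k\geq r_{k,k}$ and $r_{k,k}=u_kS\,{}^{t}u_k\geq s_{k,k}$, hence $v_kR\,{}^{t}v_k=r_{k,k}$. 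Since $R$ is interior, the minimum of $uR\,{}^{t}u$ over admissible vectors at level $k$ equals $r_{k,k}$ and is attained only at $\pm e_k$: otherwise the linear functional $R\mapsto uR\,{}^{t}u$ would coincide with $R\mapsto r_{k,k}$ on the open set $\mathring{\mathcal R}$, hence identically on $\mathrm{Sym}_4$, forcing $u=\pm e_k$. Therefore $v_k=\pm e_k$, and after the induction $V=\mathrm{diag}(\varepsilon_1,\dots,\varepsilon_4)$ with $\varepsilon_k\in\{\pm 1\}$; finally B1) for $S=VR\,{}^{t}V$ gives $\varepsilon_k\varepsilon_{k+1}r_{k,k+1}\geq 0$ while $r_{k,k+1}>0$, so all $\varepsilon_k$ coincide and $V=\pm I_4$.

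The fourth claim is the main obstacle and the genuinely deep input; I would obtain it from \cite[\S4]{igusa}, \cite[\S9]{maas}. Its mechanism is that $\mathcal R$ is in fact a \emph{polyhedral} cone with only finitely many faces — this begins with the local finiteness already used (on any compact subset of the positive definite cone only finitely many B2)-inequalities fail to be strict) and is promoted, by Minkowski's reduction theory, to finiteness of the entire face lattice — while a non-empty overlap $\mathcal R\cap(U\mathcal R\,{}^{t}U)$ is, by the third claim applied to both $U$ and $U^{-1}$, contained in $\partial\mathcal R\cap\partial(U\mathcal R\,{}^{t}U)$, hence in the union of those finitely many faces. On any such overlap one still has $s_{k,k}=r_{k,k}$ for all $k$ (from the first two inequalities of the argument above, which need no interiority), and the classical finiteness of the face-pairings of the Minkowski reduction domain then bounds the number of admissible $U$. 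This last finiteness of the neighbours of $\mathcal R$ is precisely the content of the deep result we are restating, and I would quote it rather than reprove it.
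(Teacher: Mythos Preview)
The paper does not prove this theorem at all: it is introduced as a restatement of \cite[theorem at p.~139]{maas}, and the non-emptiness of $\mathring{\mathcal R}$ is separately attributed to \cite[p.~137]{maas}. Your proposal therefore goes beyond the paper's treatment by supplying genuine arguments for the first three claims, and these arguments are correct. Convexity is immediate since B1) and B2) are homogeneous linear inequalities in the entries of $R$ (the trivial cases $n=\pm e_k$ give the identity $0\ge 0$ and impose nothing, so they do not obstruct interiority); your explicit interior point via a perturbation of $\mathrm{diag}(1,2,4,8)$ works for the reason you give. The covering is exactly Algorithm~\ref{kowski}. Your rigidity proof of the third claim---the row-by-row induction using the block-triangular shape of $V$ and $U=V^{-1}$ to guarantee the gcd condition, the two-sided squeeze $s_{k,k}=r_{k,k}$, and the observation that a nonnegative linear functional vanishing at an interior point is identically zero---is sound and is essentially the mechanism in Maass. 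For the fourth claim you defer to \cite{igusa,maas}, which is precisely what the paper does; there is nothing to add.
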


The fact that $\mathring{\mathcal R} \neq \emptyset$ is not obvious, and a non constructive proof is given in \cite[page 137]{maas}. 

Now, our second step in the classification of  rank-$4$ lattices and quaternionic tori makes use of a proper subset of the set of reduced bases.

\begin{definiz}\label{special basis}
A reduced basis $\{\omega_{1}, \omega_{2}, \omega_{3}, \omega_{4}\}$ of a rank-$4$ lattice $L$ with the property that $\omega_1=1$ will be called a \emph{special basis}.    
\end{definiz}

\begin{teo}\label{basespeciale}
Let $L_1$ be a rank-$4$ lattice and let $\{v_{1}, v_{2}, v_{3}, v_{4}\}$ be a basis of $L_1$. Then $\{v_{1}, v_{2}, v_{3}, v_{4}\}$ is equivalent to a special basis $\{1, \omega_{2}, \omega_{3}, \omega_{4}\}$ of a rank-$4$ lattice $L_2$.
As a consequence $L_1=\mathbb{Z}v_1+\mathbb{Z}v_2+\mathbb{Z}v_3+\mathbb{Z}v_4$ and $L_2=\mathbb{Z}+\mathbb{Z}\omega_2+\mathbb{Z}\omega_3+\mathbb{Z}\omega_4$ are equivalent, and hence they generate equivalent  tori.
\end{teo}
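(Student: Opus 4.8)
The plan is to combine Theorem~\ref{baseridotta} (existence of a reduced basis) with the normalization freedom granted by the quaternionic multiplier $a\in\mathbb H^*$ in Theorem~\ref{teoequiv}, in order to bring the first vector of a reduced basis to $1$ while preserving reducedness. First I would apply Theorem~\ref{baseridotta} to the given basis $\{v_1,v_2,v_3,v_4\}$ of $L_1$, obtaining $U\in GL(4,\mathbb Z)$ such that $\{w_1,w_2,w_3,w_4\}$ (with $(w_1,w_2,w_3,w_4)^t = U(v_1,v_2,v_3,v_4)^t$) is a reduced basis of $L_1$; by Theorem~\ref{baseridotta} itself, $L_1=\mathbb Z w_1+\cdots+\mathbb Z w_4$. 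Since $w_1$ is the first vector of a reduced basis, condition B2) at step $k=1$ forces $\langle w_1,w_1\rangle$ to be the minimal squared norm of a nonzero lattice element, and in particular $w_1\neq 0$, so $w_1$ is invertible in $\mathbb H$.

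Next I would set $a=w_1^{-1}\in\mathbb H^*$ and define $\omega_\ell = w_\ell\, a = w_\ell\, w_1^{-1}$ for $\ell=1,2,3,4$, so that $\omega_1=1$. Let $L_2 = \mathbb Z+\mathbb Z\omega_2+\mathbb Z\omega_3+\mathbb Z\omega_4 = L_1 w_1^{-1}$; this is again a rank-$4$ lattice because right multiplication by the nonzero quaternion $w_1^{-1}$ is an $\mathbb R$-linear isomorphism of $\mathbb H$ (Remark~\ref{omotetia}), hence sends the $\mathbb R$-basis $\{w_1,\dots,w_4\}$ to an $\mathbb R$-basis $\{1,\omega_2,\omega_3,\omega_4\}$. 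By construction (with $A=I_4$ in Theorem~\ref{teoequiv}) the bases $\{w_1,\dots,w_4\}$ and $\{1,\omega_2,\omega_3,\omega_4\}$ are equivalent, and equivalence of bases is transitive (it is induced by the transitive equivalence of tori, Definition~\ref{equivalent-tori}), so $\{v_1,\dots,v_4\}$ is equivalent to $\{1,\omega_2,\omega_3,\omega_4\}$, and $L_1$, $L_2$ are equivalent lattices generating equivalent tori.

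The remaining — and genuinely substantive — point is to check that $\{1,\omega_2,\omega_3,\omega_4\}$ is still a \emph{reduced} basis, i.e.\ that passing from $w_\ell$ to $w_\ell w_1^{-1}$ does not destroy conditions B1) and B2) of Proposition~\ref{cnsfundaregion}. The key observation is that right multiplication by $w_1^{-1}$ scales the scalar product by the constant factor $|w_1|^{-2}=\langle w_1,w_1\rangle^{-1}$: indeed, by Remark~\ref{omotetia}, $q\mapsto q\,\frac{w_1^{-1}}{|w_1^{-1}|}$ is a Euclidean rotation, hence $\langle pw_1^{-1},qw_1^{-1}\rangle = |w_1|^{-2}\langle p,q\rangle$ for all $p,q\in\mathbb H$. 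Therefore the Gram matrix of $\{1,\omega_2,\omega_3,\omega_4\}$ is exactly $|w_1|^{-2}R$, where $R$ is the (reduced) Gram matrix of $\{w_1,\dots,w_4\}$. Since $|w_1|^{-2}>0$ and both B1) (sign conditions $r_{k,k+1}\ge 0$) and B2) (inequalities $(n)R\,^t(n)\ge r_{k,k}$) are invariant under multiplication of the whole matrix by a positive scalar, the matrix $|w_1|^{-2}R$ is again a reduced Gram matrix; equivalently, $\mathcal R$ is a cone by Theorem~\ref{generale}. Hence $\{1,\omega_2,\omega_3,\omega_4\}$ is a reduced basis with first vector $1$, i.e.\ a special basis in the sense of Definition~\ref{special basis}, which completes the proof. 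The main obstacle is precisely this scaling lemma for the Gram matrix under right multiplication by $w_1^{-1}$ — once it is in hand, everything else is bookkeeping — together with the mild subtlety that one must argue $w_1\neq 0$ from condition B2) at step $k=1$ before inverting it. $\square$
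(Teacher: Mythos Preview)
Your proposal is correct and follows essentially the same approach as the paper's own proof: apply Theorem~\ref{baseridotta} to obtain a reduced basis $\{u_1,\dots,u_4\}$ of $L_1$, right-multiply by $u_1^{-1}$ to normalize the first vector to $1$, and observe (via Remark~\ref{omotetia}) that this is a rigid motion composed with a positive homothety, so the Gram matrix is merely rescaled by a positive constant and the conditions B1), B2) of Proposition~\ref{cnsfundaregion} are preserved. Your write-up is in fact slightly more explicit than the paper's about the scaling of the Gram matrix; the only over-elaboration is the justification that $w_1\neq 0$ via B2), which is immediate from $\{w_1,\dots,w_4\}$ being an $\mathbb R$-linearly independent set.
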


\begin{proof} By Theorem \ref{baseridotta}, there
there exist a matrix $U$ of $GL(4, \mathbb{Z})$ and a reduced basis  $\{u_1, u_{2}, u_{3}, u_{4}\}$ of the lattice $L_1$ such that
 \begin{equation*}
    U
\left( \begin{array}{l}
          v_1 \\
	  v_{2} \\
         v_{3} \\ 
	 v_{4}
	 \end{array} \right) =
\left( \begin{array}{l}
    u_1 \\
	  u_{2} \\
         u_{3} \\ 
	 u_{4}
	 \end{array} \right).
 \end{equation*}
 Therefore
  \begin{equation*}
    U
\left( \begin{array}{l}
          v_1 \\
	  v_{2} \\
         v_{3} \\ 
	 v_{4}
	 \end{array} \right) =
\left( \begin{array}{l}
    1 \\
	  u_{2}u_1^{-1} \\
         u_{3}u_1^{-1} \\ 
	 u_{4}u_1^{-1}
	 \end{array} \right)u_1=
	 \left( \begin{array}{l}
    1 \\
	  \omega_{2} \\
         \omega_{3} \\ 
	 \omega_{4}
	 \end{array} \right)u_1
 \end{equation*}
and, by Definition \ref{equivlattices}, the bases $\{v_{1}, v_{2}, v_{3}, v_{4}\}$ and  $\{1, \omega_{2}, \omega_{3}, \omega_{4}\}$ are equivalent. This latter basis is  special, since it is obtained multiplying the reduced basis $\{u_1, u_{2}, u_{3}, u_{4}\}$ on the right by $u_1^{-1}\neq 0$, which corresponds to applying a rigid motion composed with a ``positive'' homothety of $\mathbb{H}\cong \mathbb{R}^4$, see Remark \ref{omotetia}. Indeed such a transformation maintains the fact that the Gram matrix is reduced (see Proposition \ref{cnsfundaregion}).
\end{proof}
At this point it is possible to associate to each class of equivalence of quaternionic tori at least a special basis of a rank-$4$ lattice, according to

\begin{corollario}\label{i)ii)}  Let $T$ be a 
quaternionic torus. Then, up to biregular diffeomorphisms, we can suppose that $T=\mathbb{H}/L$  where the lattice $L$ is generated by a special basis $\{1, \omega_{2}, \omega_{3}, \omega_{4}\}$.
\end{corollario}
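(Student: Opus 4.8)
The plan is to deduce Corollary \ref{i)ii)} directly from the results already established, essentially by chaining together Theorem \ref{basespeciale} and Proposition \ref{lifting} (or, equivalently, Theorem \ref{teoequiv}). Start with an arbitrary quaternionic torus $T$. By its very definition, $T$ is obtained as a quotient $T = \mathbb{H}/L_1$ where $L_1$ is some rank-$4$ lattice of $\mathbb{H}$; fix any basis $\{v_1, v_2, v_3, v_4\}$ of $L_1$.

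Next I would invoke Theorem \ref{basespeciale}: applied to this basis, it produces a special basis $\{1, \omega_2, \omega_3, \omega_4\}$ of a rank-$4$ lattice $L_2 = \mathbb{Z} + \mathbb{Z}\omega_2 + \mathbb{Z}\omega_3 + \mathbb{Z}\omega_4$ which is equivalent to $L_1$. By Definition \ref{equivlattices}, saying that $L_1$ and $L_2$ are equivalent lattices means precisely that the generated quaternionic tori $\mathbb{H}/L_1$ and $\mathbb{H}/L_2$ are equivalent in the sense of Definition \ref{equivalent-tori}, i.e.\ there is a biregular diffeomorphism between them.

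Finally I would conclude: since $T = \mathbb{H}/L_1$ is biregularly diffeomorphic to $\mathbb{H}/L_2$, and $L_2$ is generated by the special basis $\{1, \omega_2, \omega_3, \omega_4\}$, we have shown that up to biregular diffeomorphisms $T = \mathbb{H}/L$ with $L$ generated by a special basis, which is exactly the assertion. One could alternatively make the biregular map explicit via Proposition \ref{lifting}: the equivalence in Theorem \ref{basespeciale} comes with an element $a = u_1 \in \mathbb{H}^*$ and a matrix $U \in GL(4,\mathbb{Z})$, and the affine map $F(q) = qa$ sends $L_1$ onto $L_2$, hence descends to the desired biregular diffeomorphism $f\colon \mathbb{H}/L_1 \to \mathbb{H}/L_2$.

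In truth there is no real obstacle here — this corollary is a bookkeeping consequence of the machinery built in Section \ref{sei}, and the only point requiring a moment's care is simply to notice that ``equivalent lattices'' unwinds, through Definitions \ref{equivlattices} and \ref{equivalent-tori}, to ``biregularly diffeomorphic tori'', so that the special basis furnished by Theorem \ref{basespeciale} can legitimately be used to represent the class of $T$. The statement is included precisely because it records, in the form that will be used repeatedly in the sequel, that every quaternionic torus admits a special-basis representative.
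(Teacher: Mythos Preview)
Your proposal is correct and matches the paper's intent: the corollary is stated there without proof, as an immediate consequence of Theorem~\ref{basespeciale} together with Definition~\ref{equivlattices}, which is exactly the chain you spell out. One cosmetic remark: in your final paragraph the map $F(q)=qu_1$ actually sends $L_2$ onto $L_1$ (since $\omega_i = u_i u_1^{-1}$ in the proof of Theorem~\ref{basespeciale}), so the explicit biregular diffeomorphism $T_1\to T_2$ is induced by $q\mapsto qu_1^{-1}$; this does not affect the argument, as equivalence is symmetric.
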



We will now pass to identify 
 a natural and useful subset of the possible bases for rank-$4$ lattices.
Let $p=x_0+x_1i+x_2j+x_3k  \in \mathbb{H}$ and let $A$ be a $4\times 4$ matrix with real coefficients. We set the notation $A(p)$ to denote the quaternion whose real components are
    \begin{equation}\label{matricevettore}
   A \left( \begin{array}{l}
          x_0 \\
	x_{1} \\
          x_{2} \\ 
	 x_{3}
	 \end{array} \right).  
\end{equation}
The following is a useful elementary result of linear algebra:
\begin{prop}\label{ortogonale}
If two bases $\{\omega_1, \omega_{2},\omega_{3}, \omega_{4}\}$ and $\{v_1, v_{2},v_{3}, v_{4}\}$ have the same Gram matrix, then there exists an orthogonal matrix $B \in O(4,\mathbb{R})$ such that $B(\omega_l)=v_l$ for $l=1,2,3,4.$ 
\end{prop}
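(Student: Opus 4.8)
The plan is to exploit the fact that the Gram matrix encodes all the mutual scalar products of the basis vectors, and that two sets of four vectors in $\mathbb{R}^4$ with the same Gram matrix must differ by an isometry fixing the origin. Concretely, I would first observe that since $\{\omega_1,\omega_2,\omega_3,\omega_4\}$ is a basis of a rank-$4$ lattice, these four quaternions are $\mathbb{R}$-linearly independent, hence form a basis of $\mathbb{H}\cong\mathbb{R}^4$ as a real vector space; the same holds for $\{v_1,v_2,v_3,v_4\}$. Thus there is a unique $\mathbb{R}$-linear map $B\colon\mathbb{R}^4\to\mathbb{R}^4$ with $B(\omega_l)=v_l$ for $l=1,2,3,4$, and $B$ is invertible.

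Next I would verify that $B$ is orthogonal. The cleanest way is to write the two bases as columns of matrices: let $W$ be the $4\times4$ real matrix whose columns are the coordinate vectors of $\omega_1,\dots,\omega_4$ with respect to the standard basis $\{1,i,j,k\}$, and likewise $V$ for $v_1,\dots,v_4$. Then, using the definition \eqref{scalare} of the scalar product, the common Gram matrix is $S_0={}^tW W={}^tV V$. The linear map sending $\omega_l$ to $v_l$ is represented by $B=VW^{-1}$ (so that $BW=V$). Then
\[
{}^tB\,B = {}^t(VW^{-1})\,(VW^{-1}) = {}^tW^{-1}\,({}^tV V)\,W^{-1} = {}^tW^{-1}\,({}^tW W)\,W^{-1} = I_4,
\]
which shows $B\in O(4,\mathbb{R})$. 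Finally I would note that, by construction, $B(\omega_l)=BW e_l = V e_l = v_l$ in the sense of the notation \eqref{matricevettore}, which is exactly the claimed conclusion.

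There is essentially no serious obstacle here; the statement is a standard linear-algebra fact. The only point requiring a word of care is the invertibility of $W$ (equivalently, that $\{\omega_l\}$ is a genuine $\mathbb{R}$-basis of $\mathbb{H}$), which follows because a rank-$4$ lattice is by definition generated by four $\mathbb{R}$-linearly independent vectors; and one should make sure the identification of ``the quaternion $A(p)$'' in \eqref{matricevettore} with matrix multiplication on coordinate columns is used consistently, so that $B(\omega_l)=v_l$ really is the intended meaning. One could alternatively phrase the argument via Gram–Schmidt or via the polarization identity $\langle B\omega_k,B\omega_l\rangle=\langle v_k,v_l\rangle=\langle\omega_k,\omega_l\rangle$ extended bilinearly to all of $\mathbb{R}^4$, but the matrix computation above is the shortest and is what I would write.
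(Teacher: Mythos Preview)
Your proof is correct and follows essentially the same approach as the paper: define $B$ as the unique linear map sending $\omega_l$ to $v_l$, then verify it is orthogonal using the equality of the Gram matrices. The paper phrases the orthogonality check via bilinearity ($\langle B\omega_l,B\omega_p\rangle=\langle v_l,v_p\rangle=\langle\omega_l,\omega_p\rangle$, hence $B$ preserves all inner products), whereas you carry out the equivalent explicit matrix computation ${}^tB\,B=I_4$; these are the same argument in slightly different dress.
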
 
\begin{proof}
Since the two bases have the same Gram matrix we have $\langle \omega_l,\omega_p \rangle = \langle v_l,v_p \rangle$ for $l, p=1,...,4$. Let $B$ be the matrix which transforms the first basis into the second one, then $\langle \omega_l,\omega_p \rangle= \langle B(\omega_l), B(\omega_p) \rangle$, for all $l, p = 1, ...,4$.  Hence $B$ is an isometry with respect to the standard scalar product and the assertion follows. 
\end{proof} 

We will end this section with some remarks. A lattice $L$ is called \emph{normalized} if $1\in L$ and if every element of $L$ has norm greater or equal than $1$. 
It can be proved that conditions (1)-(2) of Proposition \ref{condizio1/2} together with B1) are sufficient for a Gram matrix to be a reduced Gram matrix if and only if the associated lattice is normalized. To clarify what we mean, we provide an example of a non-normalized lattice $L$ having a basis $\mathcal{B}$ whose Gram matrix  satisfies conditions (1)-(2) of Proposition \ref{condizio1/2} together with B1), but is not reduced. In fact $L$ is such that an integer combination of three vectors of  $\mathcal B$ is inside $\mathbb{B}.$ To see this, notice that 
 if $I=\frac{1}{\sqrt{2}}i + \frac{1}{\sqrt{2}}j$ and $J=\frac{1}{\sqrt{3}}i + \sqrt{\frac23}j$, then $\mathcal{B}= \{ 1, v_1=\,\,  e^{\frac{\pi}{3}I}, v_2=\,\, e^{\frac{2\pi }{3}J}, k \}$ is a basis for $L$ whose (approximated) Gram matrix 
$$
G=\left( \begin{array}{lllll}
1 & \frac{1}{2} & -\frac{1}{2} & 0 \\
\frac{1}{2} & 1 & (0.4891) & 0 \\
-\frac{1}{2} & (0.4891) & 1 & 0 \\
0 & 0 & 0& 1 
\end{array} \right) 
$$
satisfies conditions (1)-(2) of Proposition \ref{condizio1/2} and B1). Nevertheless it is easy to see that  $(1- v_1 + v_2) \in \mathbb{B},$ and hence $G$ is not reduced.

\section{A moduli space for quaternionic tori. Tame tori}

The aim of this section is to find a fundamental set, and possibly a moduli space, to ``parameterize"
the equivalence classes of quaternionic tori, with respect to the
action of biregular diffeomorphisms. We will then study the families of \emph{tame} lattices and \emph{tame} tori, whose definition is inspired by Theorem \ref{generale}, and whose moduli correspond to the interior of the fundamental set $\mathcal M$.

We will start by identifying a useful subset of the set $\mathcal R$ of reduced Gram matrices.

\begin{oss} In the sequel we will always consider reduced Gram matrices  associated to special bases, i.e. matrices of the form
 \begin{equation}
	S_{0}= \left( \begin{array}{llll}
         1 & \langle 1,  v_{2} \rangle &\langle 1,
	  v_{3} \rangle & \langle 1, v_{4} \rangle \\
	  \langle v_{2}, 1 \rangle & \langle v_{2}, v_{2} \rangle & \langle
	  v_{2}, v_{3} \rangle &
	 \langle v_{2}, v_{4} \rangle \\
         \langle v_{3}, 1 \rangle &\langle v_{3}, v_{2} \rangle & \langle
	 v_{3}, v_{3} \rangle & \langle v_{3}, v_{4} \rangle \\ 
	 \langle v_{4}, 1\rangle & \langle v_{4}, v_{2} \rangle & \langle
	 v_{4}, v_{3} \rangle &\langle v_{4}, v_{4} \rangle
	 \end{array} \right) 
\end{equation}
where $\langle v_{4}, v_{4} \rangle\geq \langle v_{3}, v_{3} \rangle\geq \langle v_{2}, v_{2} \rangle\geq 1$. This means, in particular,  that we restrict to reduced Gram matrices belonging to an affine hyperplane of $\mathbb{R}^{10}$. We point out that we will consider, in the boundary of the set of reduced Gram matrices, only those elements that represent rank-$4$ lattices, and hence only definite positive matrices. Instead, as it appears in \cite[page 136]{maas}, 
when considering the entire set of reduced Gram matrices as a subset of the space of symmetric matrices, then its boundary contains also semi-positive definite, reduced 
matrices.
\end{oss}

The promised space of  ``parameters" for the equivalence classes  of biregular diffeomorphism of quaternionic tori is defined as follows. 
\begin{definiz}
The set $\mathcal M$ defined as
\begin{equation}\label{esse}
\mathcal{M}=\{ (v_2,v_3,v_4)\in \mathbb{H}^3: \{1, v_2,v_3,v_4 \} \hbox{ is a special basis} \}
\end{equation}
is called the \emph{fundamental set of quaternionic tori}. If we identify elements of $\mathcal M$ when they correspond to special bases originating regularly diffeomorphic tori, then as customary the quotient set  $\widetilde{\mathcal M}$ is called \emph{moduli space of quaternionic tori.} Let $(v_2,v_3,v_4)$ be a point of $\mathcal M$, and let $L=\mathbb{Z}+\mathbb{Z}v_2+\mathbb{Z}v_3+\mathbb{Z}v_4$ be the lattice generated by the special basis $\{1, v_2,v_3,v_4 \}$. We will say that $(v_2,v_3,v_4)$ is  \emph{(a representative of) the modulus} of any quaternionic torus equivalent to $T=\mathbb{H}/L$.
\end{definiz}
Corollary \ref{i)ii)} guarantees an important property of the fundamental set:
\begin{prop}
Every quaternionic torus $T$ has (at least) a modulus in $\mathcal{M}$. In other words: for every quaternionic torus $T$, there exists $(v_2, v_3, v_4)\in \mathcal M$ such that $T$ is equivalent to $\mathbb{H}/L$,  where $L=\mathbb{Z}+\mathbb{Z}v_2+\mathbb{Z}v_3+\mathbb{Z}v_4$ is the lattice generated by the special basis $ \{1, v_2,v_3,v_4 \} $. 
\end{prop}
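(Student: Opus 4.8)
The plan is to deduce the proposition directly from the results already assembled in the excerpt; in fact it is essentially a restatement of Corollary~\ref{i)ii)} once the vocabulary of moduli is in place. First I would take an arbitrary quaternionic torus $T$. By definition $T=\mathbb{H}/L$ for some rank-$4$ lattice $L\subset\mathbb{H}$, equipped with the regular quaternionic structure constructed in Section~4; let $\{v_1,v_2,v_3,v_4\}$ be any basis of $L$.

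Next I would invoke Theorem~\ref{basespeciale}: the chosen basis $\{v_1,v_2,v_3,v_4\}$ is equivalent to a special basis $\{1,\omega_2,\omega_3,\omega_4\}$ of a rank-$4$ lattice $L'=\mathbb{Z}+\mathbb{Z}\omega_2+\mathbb{Z}\omega_3+\mathbb{Z}\omega_4$, and the two lattices $L$ and $L'$ are equivalent. By Definition~\ref{equivlattices} and Theorem~\ref{teoequiv} this means exactly that the tori $\mathbb{H}/L$ and $\mathbb{H}/L'$ are equivalent, i.e.\ there is a biregular diffeomorphism between them. Hence $T$ is equivalent to $\mathbb{H}/L'$. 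Since $\{1,\omega_2,\omega_3,\omega_4\}$ is a special basis, the triple $(\omega_2,\omega_3,\omega_4)$ lies in $\mathcal{M}$ by the very definition \eqref{esse} of the fundamental set, and by the definition of \emph{modulus} just given, $(\omega_2,\omega_3,\omega_4)$ is a representative of the modulus of $T$. This is precisely the assertion.

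Strictly speaking there are no obstacles here: the statement is a packaging of Theorem~\ref{basespeciale} (equivalently Corollary~\ref{i)ii)}) together with the new terminology. The only point requiring a word of care is the passage from ``equivalent bases/lattices'' to ``equivalent tori'': one must observe that Theorem~\ref{teoequiv} was stated as an ``if and only if'', so the existence of $a\in\mathbb{H}^*$ and $A\in GL(4,\mathbb{Z})$ realizing \eqref{formulaequivalenza} indeed produces a biregular diffeomorphism $\mathbb{H}/L\to\mathbb{H}/L'$, not merely a real diffeomorphism. Apart from tracking this equivalence correctly, the proof is immediate and can be written in a couple of lines.
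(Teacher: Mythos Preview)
Your proposal is correct and matches the paper's approach exactly: the paper simply states that the proposition is guaranteed by Corollary~\ref{i)ii)}, and you have spelled out that same reduction (via Theorem~\ref{basespeciale} and the definitions of equivalence and of $\mathcal{M}$) in detail.
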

  With obvious notations, set now
$$
\hat O(3,\mathbb{R})=\begin{bmatrix}
1& \underline 0\\
^t\underline 0& O(3, \mathbb{R})
\end{bmatrix}
$$
and define 
\begin{equation}\label{emme}
\mathcal S = \bigcup_{B\in \hat O(3,\mathbb{R})} B. \mathcal M = \hat O(3, \mathbb{R}). \mathcal M
\end{equation}
where $B. \mathcal M$ means the set of all $B.(v_2,v_3,v_4)= (B(v_2), B(v_3), B(v_4))$, for all $(v_2,v_3,v_4) \in \mathcal M.$ The fundamental set $\mathcal M$  has a natural symmetry, namely we have that
\begin{prop}\label{biri2}
The fundamental set  $\mathcal M$ and the set $\mathcal S$ coincide. Equivalently, $\hat O(3,\mathbb{R})$ acts on $\mathcal M$. Moreover, if $(v_2, v_3, v_4)\in \mathcal M$, then all elements  of its orbit with respect to the action of $\hat O(3,\mathbb{R})$, 
\[
\hat O(3,\mathbb{R}).(v_2, v_3, v_4),
\]
correspond to special bases having the same reduced Gram matrix as $\{1, v_2, v_3, v_4\}$.
\end{prop}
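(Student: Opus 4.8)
The plan is to prove the two inclusions $\mathcal S \subseteq \mathcal M$ and $\mathcal M \subseteq \mathcal S$ separately, the former being the substantial one and the latter being essentially trivial. Since by definition $\mathcal M \subseteq \mathcal S$ (take $B = I_4 \in \hat O(3,\mathbb{R})$, so that $I_4.\mathcal M = \mathcal M \subseteq \bigcup_{B} B.\mathcal M = \mathcal S$), the content of the proposition is the reverse inclusion $\mathcal S \subseteq \mathcal M$, i.e. that $\hat O(3,\mathbb{R})$ genuinely sends $\mathcal M$ into itself. So fix $(v_2, v_3, v_4) \in \mathcal M$ and $B \in \hat O(3,\mathbb{R})$; I must show $(B(v_2), B(v_3), B(v_4)) \in \mathcal M$, that is, $\{1, B(v_2), B(v_3), B(v_4)\}$ is again a special basis of the lattice it generates.

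The key observation is that $B$, being of the form $\begin{bmatrix}1 & \underline 0\\ {}^t\underline 0 & O(3,\mathbb{R})\end{bmatrix}$, fixes the real axis pointwise, so $B(1) = 1$; and $B$ is an orthogonal transformation of $\mathbb{R}^4$. The first thing to check is that $\{1, B(v_2), B(v_3), B(v_4)\}$ is $\mathbb{R}$-linearly independent — immediate, since $B$ is invertible — so it is a basis of a rank-$4$ lattice. Next, and this is the heart of the matter, I claim the Gram matrix of $\{1, B(v_2), B(v_3), B(v_4)\}$ equals the Gram matrix $S_0$ of $\{1, v_2, v_3, v_4\}$: indeed $\langle B(v_k), B(v_l)\rangle = \langle v_k, v_l\rangle$ and $\langle 1, B(v_l)\rangle = \langle B(1), B(v_l)\rangle = \langle 1, v_l\rangle$ because $B$ is orthogonal and $B(1)=1$. (This is exactly the converse direction of Proposition \ref{ortogonale}, specialized to orthogonal matrices fixing $1$.) Since the property of being a reduced Gram matrix — conditions B1) and B2) of Proposition \ref{cnsfundaregion} — is a property of the matrix alone, and $\{1, v_2, v_3, v_4\}$ is special hence reduced, the Gram matrix $S_0$ is reduced; therefore $\{1, B(v_2), B(v_3), B(v_4)\}$, having the same Gram matrix $S_0$, is also a reduced basis. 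Finally its first vector is $B(1) = 1$, so it is a special basis. Hence $(B(v_2), B(v_3), B(v_4)) \in \mathcal M$, proving $\mathcal S \subseteq \mathcal M$ and therefore $\mathcal S = \mathcal M$.

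The equivalent reformulations then follow formally: $\mathcal S = \mathcal M$ says precisely that $\hat O(3,\mathbb{R}).\mathcal M = \mathcal M$, i.e. $\hat O(3,\mathbb{R})$ acts on $\mathcal M$ (that this is a group action, not merely a map, uses that $\hat O(3,\mathbb{R})$ is a group under matrix multiplication and that $B.(B'.(v_2,v_3,v_4)) = (BB').(v_2,v_3,v_4)$, which is trivial from the definition of the action as coordinatewise application of a matrix). The last assertion — that the whole orbit $\hat O(3,\mathbb{R}).(v_2,v_3,v_4)$ consists of special bases with the same reduced Gram matrix as $\{1, v_2, v_3, v_4\}$ — is exactly the Gram-matrix computation carried out in the previous paragraph, applied to each $B$ in the orbit.

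I do not expect a serious obstacle here: the argument is a direct consequence of Proposition \ref{ortogonale} (in its easy direction) plus the fact, already recorded in Proposition \ref{cnsfundaregion} and Remark \ref{chiave}, that ``reduced'' is a condition on the Gram matrix only. The one point that deserves an explicit sentence rather than being swept under the rug is $B(1) = 1$: this is where the special block form of $\hat O(3,\mathbb{R})$ — as opposed to all of $O(4,\mathbb{R})$ — is used, and it is what guarantees the transformed basis is not merely reduced but special. If one tried to use an arbitrary element of $O(4,\mathbb{R})$ the Gram matrix would still be preserved but the first basis vector would no longer be $1$, so $\hat O(3,\mathbb{R})$ is exactly the right group.
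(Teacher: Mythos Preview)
Your proof is correct and is exactly the direct computation the paper has in mind; the paper's own proof consists of the single sentence ``The proof is a direct computation.'' You have simply spelled out that computation in full, including the key point that the block form of $\hat O(3,\mathbb{R})$ guarantees $B(1)=1$ so that the transformed basis remains special and not merely reduced.
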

\begin{proof} The proof is a direct computation.
\end{proof}

The geometric symmetry of the fundamental set stated in Proposition \ref{biri2} is interesting, and suggests a remark and a few considerations, that help to identify similarities in the moduli of tori.

\begin{oss}\label{biri}
Denote as usual by $\mathcal E =\{1, i, j, k\}$ the standard basis for the space $\mathbb{H}$ of quaternions. Recall that, for every two unitary quaternions $I, J\in \mathbb{S}$ with $I\perp J$, the set $\mathcal A=\{1, I, J, IJ=K\}$ is also a (positively oriented) basis for the space $\mathbb{H}$, having the same multiplication rules of the basis $\mathcal E$.  Let us consider two lattices $L_1$ and $L_2$ generated, respectively, by the special bases $\mathcal V=\{1, v_2, v_3, v_4\}$ and $\mathcal W=\{1, w_2, w_3, w_4\}$. Let the coefficients of the elements of $\mathcal V$ with respect to the basis $\mathcal E$ coincide with the coefficients of the elements of $\mathcal W$ with respect to the basis $\mathcal A$. Then, in view of the last statement of Proposition \ref{biri2}, the two generated tori $T_1$ and $T_2$ - notwithstanding not equivalent according to our definition - have the same reduced Gram matrix and completely similar structures.
\end{oss}
Let $L$ be a lattice  having a special basis $\mathcal V=\{1, v_2, v_3, v_4\}$. Then there exists a basis $\mathcal A=\{1, I, J, IJ=K\}$ of $\mathbb{H}$ such that $v_2 \in \mathbb{R} + I\mathbb{R}$. What is stated in  Proposition \ref{biri2} and in Remark \ref{biri} allows us to study only the case of lattices having a special basis $\mathcal V=\{1, v_2, v_3, v_4\}$ with $v_2 \in \mathbb{R} + I\mathbb{R}$, where $I\in \mathbb S$ is the second element of a (positively oriented) basis  $\mathcal A=\{1, I, J, IJ=K\}$ of $\mathbb H$.
\vskip .3cm
It is easy to see (and in any case we will see it later on, in this paper) that there are different elements belonging to $\partial \mathcal M$ that correspond to the same equivalence class of quaternionic tori,  or equivalently that there are quaternionic tori having more than one representative in $\mathcal M$. However, this last phenomenon is not present in the case of the  family of quaternionic  tori that we are going to define.

\begin{definiz}\label{tameness} 
 Let $L$ be a rank-$4$ lattice in $\mathbb{H}$. 
 \begin{enumerate}
 \item The lattice $L$ is called a \emph{tame lattice} if there exists a reduced basis of $L$ 
whose Gram matrix is an interior point of $\mathcal R$. Such a basis will be called a \emph{tame basis}.
\item A quaternionic torus $T$ is called a \emph{tame torus} if there exists a tame lattice $L$ such that $T$ is equivalent to $\mathbb{H}/L$. 
 \end{enumerate}
 \end{definiz}

%

\noindent Here is an easy criterion to decide if a given torus is tame or not.

\begin{prop} \label{baseunica}
 Let $L$ be a lattice and $\mathcal B = \{ v_1, v_2,v_3,v_4 \}$ be a reduced basis for $L$. Consider the torus $T = \mathbb{H}/L$.
Then $T$ is a tame torus, if, and only if, $\pm\mathcal{B}$ are the unique reduced bases for $L$.

\end{prop}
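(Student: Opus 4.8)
The statement to prove is Proposition \ref{baseunica}: for a lattice $L$ with reduced basis $\mathcal B=\{v_1,v_2,v_3,v_4\}$, the torus $T=\mathbb H/L$ is tame if and only if $\pm\mathcal B$ are the only reduced bases of $L$. The plan is to prove the two implications separately, using Theorem \ref{generale} (the convex-cone structure of $\mathcal R$ and the fact that two chambers $\mathcal R$ and $U\mathcal R\,^tU$ meet only along $\partial\mathcal R$ for $U\neq\pm I_4$) as the central tool, together with Proposition \ref{ortogonale} to translate between "same Gram matrix" and "differ by an orthogonal transformation."

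First I would handle the easier direction. Suppose $\pm\mathcal B$ are the unique reduced bases of $L$. I want to show $L$ is tame, i.e. that the Gram matrix $R$ of $\mathcal B$ is an interior point of $\mathcal R$. Argue by contradiction: if $R\in\partial\mathcal R$, then by Theorem \ref{generale} there is some $U\in GL(4,\mathbb Z)$, $U\neq\pm I_4$, with $R\in\mathcal R\cap(U\mathcal R\,^tU)$; equivalently $R'={}^{t}U^{-1}R\,U^{-1}$... — more carefully, $R$ being in a face shared with a translate means the basis $\mathcal B'$ obtained from $\mathcal B$ via $U$ (as in Remark \ref{chiave}) also has a reduced Gram matrix, hence $\mathcal B'$ is a reduced basis. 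The point is that $\mathcal B'\neq\pm\mathcal B$ as ordered bases because $U\neq\pm I_4$, contradicting uniqueness. (A small subtlety: one must check that distinct $U\neq\pm I_4$ genuinely give an ordered basis different from $\pm\mathcal B$ — this follows since $U$ is the change-of-basis matrix and $\{v_i\}$ are $\mathbb R$-linearly independent.)

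For the converse, suppose $T$ is tame; then by Definition \ref{tameness} there is a tame lattice $L'$ and a biregular diffeomorphism $T\cong\mathbb H/L'$, and $L'$ has a reduced basis whose Gram matrix lies in $\mathring{\mathcal R}$. I would first reduce to the case $L=L'$: by Theorem \ref{teoequiv}/Proposition \ref{lifting} equivalence of tori corresponds to $L'=La$ for some $a\in\mathbb H^*$, and right multiplication by $a$ is a homothety composed with a rotation (Remark \ref{omotetia}), so it sends reduced bases to reduced bases with the same Gram matrix, and preserves "interior point of $\mathcal R$"; hence $L$ itself has a reduced basis, call it $\mathcal C$, with Gram matrix $G\in\mathring{\mathcal R}$. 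Now I must compare $\mathcal B$ and $\mathcal C$. Since both are reduced bases of the same $L$, there is $U\in GL(4,\mathbb Z)$ with $\mathcal B=U\mathcal C$, so the Gram matrix of $\mathcal B$ is $R=UG\,^tU$. Then $G\in\mathcal R\cap(U^{-1}\mathcal R\,^t U^{-1})$ and $G$ is interior, so by Theorem \ref{generale} we must have $U^{-1}=\pm I_4$, i.e. $U=\pm I_4$, giving $\mathcal B=\pm\mathcal C$; in particular $R=G\in\mathring{\mathcal R}$. Finally, to show $\pm\mathcal B$ are the *only* reduced bases: any reduced basis $\mathcal D=V\mathcal B$ has Gram matrix $VR\,^tV\in\mathcal R$ with $R$ interior, forcing $V=\pm I_4$ again.

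The main obstacle is getting the bookkeeping right between the three notions in play — reduced *bases*, reduced *Gram matrices*, and the chamber decomposition of $\mathcal R$ — and in particular pinning down the exact correspondence "$\mathcal B'$ is a reduced basis of $L$ $\iff$ its Gram matrix lies in $\mathcal R\cap(U\mathcal R\,^tU)$ for the appropriate $U$." Here one needs to be careful that in Theorem \ref{generale} the intersections are taken among positive-definite matrices only (as emphasized in the Remark preceding it), which is automatic since we start from an honest rank-$4$ lattice. The other delicate point is the reduction from a general tame $L'$ to $L$ via the homothety-rotation $q\mapsto qa$: one should cite Remark \ref{omotetia} and the argument in the proof of Theorem \ref{basespeciale} to confirm that this operation preserves both reducedness and the property of being an interior point of $\mathcal R$ (it acts on Gram matrices by $G\mapsto |a|^2 BG\,^tB$ for an orthogonal $B$, which is a homeomorphism of $\mathcal R$ onto itself fixing the boundary). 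Once these identifications are clean, everything reduces to the single fact from Theorem \ref{generale} that only $\pm I_4$ fixes an interior chamber point.
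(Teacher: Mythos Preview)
Your argument for ``tame $\Rightarrow$ unique'' is essentially the paper's, and in fact more careful: the paper tacitly identifies ``tame torus'' with ``tame lattice'' without spelling out the homothety--rotation reduction from $L'$ to $L$ that you supply via Remark \ref{omotetia}. For the other direction, however, the paper proceeds differently and more concretely. Rather than invoking the chamber structure, it observes that $R\in\partial\mathcal R$ means equality holds in one of the defining conditions B1) or B2) of Proposition \ref{cnsfundaregion}: if some $r_{k,k+1}=0$, one flips the signs of $v_{k+1},\dots,v_4$ to obtain a second reduced basis; if equality holds in some instance of B2), the Minkowski--Siegel Algorithm itself produces an alternative minimizer and hence a second reduced basis.

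Your route for this direction is more uniform (both implications via the chamber decomposition), but be aware of a small imprecision: Theorem \ref{generale} as stated does \emph{not} assert that every boundary point of $\mathcal R$ lies in some $U\mathcal R\,{}^{t}U$ with $U\neq\pm I_4$; it only says that such intersections, when nonempty, are contained in $\partial\mathcal R$. The converse you need requires a short extra argument: since $\mathcal G$ is open, $\mathcal R$ is closed in $\mathcal G$, and $\mathcal G=\bigcup_U U\mathcal R\,{}^{t}U$, every neighborhood of $R\in\partial\mathcal R\cap\mathcal G$ meets some translate $U\mathcal R\,{}^{t}U$ with $U\neq\pm I_4$; local finiteness of the translates (implicit in the last clause of Theorem \ref{generale}) and closedness of each translate then force $R$ itself into one of them. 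This is standard and easily supplied, so the gap is minor; the paper's constructive argument sidesteps it by working directly with the inequalities that cut out $\mathcal R$.
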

\begin{proof}
If the torus $T$ is tame, then suppose  that there are two different reduced bases, $\mathcal{B} = \{ v_1, v_2,v_3,v_4 \}$ and $\mathcal{B}_1=\{\omega_1, \omega_{2},\omega_{3}, \omega_{4}\}$ for the tame lattice $L$, with $\mathcal{B}_1\neq \pm\mathcal{B}$. 
As a consequence, there exists $U\in GL(4,\mathbb{Z})\setminus \{\pm I_4\}$ such that 
 \begin{equation*}
    U
\left( \begin{array}{l}
          v_1 \\
	  v_{2} \\
         v_{3} \\ 
	 v_{4}
	 \end{array} \right) =
\left( \begin{array}{l}
    \omega_1 \\
	  \omega_{2} \\
         \omega_{3} \\ 
	 \omega_{4}
	 \end{array} \right).
 \end{equation*}
If $R$ and $R_1$ denote the reduced Gram matrices associated, respectively,  to the bases $\mathcal{B}$ and $\mathcal{B}_1$, then 
$$
U\ R\ ^tU=R_1.
$$
Therefore, $R$ and $R_1$ belong to the boundary of $\mathcal{R}$ by Theorem \ref{generale}, and hence the torus is not tame.
To prove the converse, suppose that $T$ is not tame, i.e. that the Gram matrix $R=(r_{i,j})$ associated to $\mathcal{B}$ belongs to $\partial\mathcal{R}$. Therefore, equality holds either  in B1) or in B2). In the first case, since $r_{k,k+1}=0$ for some $k=1,2,3$, the two consecutive vectors $v_k$ and $v_{k+1}$ are orthogonal. We can then consider a second reduced basis $\mathcal{B'}$ obtained by substituting  $v_{k+1}$ with $-v_{k+1}$, ... , $v_{4}$ with $-v_{4}$. If instead equality holds in B2), 
the Minkowski-Siegel Reduction Algorithm directly implies the existence of a second reduced basis.

\end{proof}

We will now  find, inside the fundamental set  $\mathcal M$, a moduli space for the classes of equivalence of tame quaternionic tori. In fact if we set
\begin{equation}\label{tau}
\mathcal{T}=\{ (v_2,v_3,v_4)\in \mathcal M: \{1, v_2,v_3,v_4 \} \hbox{ is a (special) tame basis} \}
\end{equation}
then, with the aid of a preliminary lemma, we can prove a uniqueness result for the modulus of a tame torus.

\begin{lemma} \label{norma a}
Let two  lattices $L_1$ and $L_2$ of $\,\,\mathbb{H}$ be generated respectively, by the special bases $\{ 1, \alpha_2, \alpha_3, \alpha_4 \}$ and $\{ 1, \omega_2, \omega_3, \omega_4 \}.$ If $F(q)=q  a,$ with $a \in \mathbb{H}^*,$ is an automorphism of $\mathbb{H}$ such that $F(L_1)=L_2,$ then $|a|=1.$
\end{lemma}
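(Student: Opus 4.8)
The plan is to compare the volumes of the two lattices, which transform in a controlled way under the map $F$. Recall that for a rank-$4$ lattice $L$ generated by a basis $\{u_1,u_2,u_3,u_4\}$, the covolume (the volume of a fundamental parallelotope) is $\mathrm{vol}(\mathbb{H}/L)=|\det(u_1,u_2,u_3,u_4)|=\sqrt{\det S_0}$, where $S_0$ is the Gram matrix of the basis; this quantity is independent of the chosen basis, since a change of basis is realized by a matrix of $GL(4,\mathbb{Z})$ which has determinant $\pm1$.

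First I would observe that $F(q)=qa$ is, by Remark \ref{omotetia}, the composition of a Euclidean rigid motion of $\mathbb{H}\cong\mathbb{R}^4$ with the homothety of ratio $|a|>0$; hence $F$ multiplies $4$-dimensional Lebesgue measure by the factor $|a|^4$. Since $F(L_1)=L_2$, the image under $F$ of a fundamental domain for $L_1$ is a fundamental domain for $L_2$, so
\begin{equation*}
\mathrm{vol}(\mathbb{H}/L_2)=|a|^4\,\mathrm{vol}(\mathbb{H}/L_1).
\end{equation*}
Next I would use that both lattices are generated by \emph{special} bases, so in particular $1$ belongs to each of them: more is true, by Theorem \ref{teoequiv} / Definition \ref{equivlattices} there is $A\in GL(4,\mathbb{Z})$ with $A\,{}^t(1,\omega_2,\omega_3,\omega_4)={}^t(1,\alpha_2,\alpha_3,\alpha_4)\,a$, but the cleanest route is just the covolume identity together with a second one obtained by symmetry. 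Applying the same argument to $F^{-1}(q)=qa^{-1}$, which satisfies $F^{-1}(L_2)=L_1$, gives $\mathrm{vol}(\mathbb{H}/L_1)=|a|^{-4}\,\mathrm{vol}(\mathbb{H}/L_2)$, which is the same equation; so by itself the covolume relation only says $|a|^4=\mathrm{vol}(\mathbb{H}/L_2)/\mathrm{vol}(\mathbb{H}/L_1)$ and does not immediately force $|a|=1$.

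The extra ingredient I would bring in is that $1\in L_1$ and $1\in L_2$, combined with the fact that special bases are reduced, hence the lattices are normalized: every nonzero element of $L_1$ and of $L_2$ has norm $\ge1$, with $1$ itself attaining the minimal norm $1$. Since $F(L_1)=L_2$ and $F$ scales all norms by $|a|$, the minimal norm of $L_2$ equals $|a|$ times the minimal norm of $L_1$, i.e. $1=|a|\cdot 1$, giving $|a|=1$. (Alternatively one argues: $F(1)=a\in L_2$ so $|a|\ge1$, while $F^{-1}(1)=a^{-1}\in L_1$ so $|a^{-1}|\ge1$, i.e. $|a|\le1$; together $|a|=1$.) I expect this second, short argument via the normalization of both lattices to be the decisive one, and the only mild subtlety is to make sure that "special basis'' indeed entails "normalized lattice'' — this follows from Proposition \ref{condizio1/2}(1) (which gives $\langle\omega_1,\omega_1\rangle=1\le\langle\omega_k,\omega_k\rangle$) together with condition B2) of Proposition \ref{cnsfundaregion} applied at step $k=1$, which bounds the norm of every primitive lattice vector below by $r_{1,1}=1$, and hence the norm of every nonzero lattice vector below by $1$.
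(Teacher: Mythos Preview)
Your proposal is correct, and the decisive argument you give in parentheses --- $F(1)=a\in L_2$ so $|a|\ge 1$, while $F^{-1}(1)=a^{-1}\in L_1$ so $|a|\le 1$ --- is exactly the paper's proof (the paper just writes out $a^{-1}=n_1+n_2\alpha_2+n_3\alpha_3+n_4\alpha_4\in L_1$ explicitly). The covolume discussion is a detour that, as you yourself note, does not by itself yield $|a|=1$; you can safely drop it and lead with the short normalization argument.
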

\begin{proof} Since $a=F(1)$, we have that
$a \in L_2,$ and hence $|a| \ge 1.$ Moreover, since $\{a, \alpha_2 a, \alpha_3 a, \alpha_4 a \}$ are linearly independent vectors which generate the lattice  $L_2,$ then there exist $n_1, n_2, n_3, n_4 \in \mathbb{Z}$ such that $n_1 (a)+n_2 (\alpha_2 a) + n_3 (\alpha_3 a) +n_4 (\alpha_4 a) =1$. This equality implies that $(n_1 +n_2\alpha_2 + n_3 \alpha_3 + n_4 \alpha_4)a=1.$ But 
$|n_1 +n_2\alpha_2 + n_3 \alpha_3 + n_4 \alpha_4| \ge 1$ because $n_1 +n_2\alpha_2 + n_3 \alpha_3 + n_4 \alpha_4$ is an element of $L_1.$ Hence $|a| \le 1.$ 
\end{proof}

\begin{teo} The set $\mathcal T\subset \mathcal M$ is  a moduli space for the equivalence classes of tame tori. In other words, every tame torus has exactly one representative in $\mathcal T$ (its modulus).
\end{teo}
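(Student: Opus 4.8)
The plan is to show two things: every tame torus has \emph{at least} one representative in $\mathcal{T}$, and it has \emph{at most} one. The first part is essentially a refinement of Corollary \ref{i)ii)}. Starting from a tame torus $T$, by definition there is a tame lattice $L$ with $T\simeq \mathbb{H}/L$, and a tame basis $\{u_1,u_2,u_3,u_4\}$ of $L$ whose Gram matrix lies in $\mathring{\mathcal{R}}$. Multiplying this basis on the right by $u_1^{-1}$ (a rigid motion composed with a positive homothety, Remark \ref{omotetia}) yields an equivalent special basis $\{1,\omega_2,\omega_3,\omega_4\}$; since right multiplication by $u_1^{-1}$ is an orthogonal map up to the positive scalar $|u_1^{-1}|$, it acts on the cone $\mathcal{R}$ of reduced Gram matrices by $R\mapsto |u_1^{-1}|^2 R$ (conjugation by an orthogonal matrix fixes the Gram matrix, as in Proposition \ref{ortogonale}, and the scalar just dilates), so the new Gram matrix is still interior to $\mathcal{R}$. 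Hence $(\omega_2,\omega_3,\omega_4)\in\mathcal{T}$ is a modulus of $T$.

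For uniqueness, suppose $(v_2,v_3,v_4)$ and $(\omega_2,\omega_3,\omega_4)$ both lie in $\mathcal{T}$ and are moduli of the same tame torus $T$, i.e.\ the lattices $L_1=\mathbb{Z}+\mathbb{Z}v_2+\mathbb{Z}v_3+\mathbb{Z}v_4$ and $L_2=\mathbb{Z}+\mathbb{Z}\omega_2+\mathbb{Z}\omega_3+\mathbb{Z}\omega_4$ generate equivalent tori. By Theorem \ref{teoequiv} there exist $a\in\mathbb{H}^*$ and $A\in GL(4,\mathbb{Z})$ realizing the equivalence. Since the special basis $\{1,\omega_2,\omega_3,\omega_4\}$ has first element $1$, and the equivalence sends $\{1,v_2,v_3,v_4\}\cdot a$ to $A$ applied to the $\omega$-basis, Lemma \ref{norma a} applies (after possibly exchanging the roles of $L_1,L_2$ and replacing $a$ by $a^{-1}$) and gives $|a|=1$. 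So $F(q)=qa$ with $|a|=1$ is a rotation of $\mathbb{H}$: it maps the basis $\{1,v_2,v_3,v_4\}\cdot a^{-1}\cdot a$... more precisely, $\{a,v_2a,v_3a,v_4a\}$ is an $\mathbb{R}$-orthonormal-preserving image, so $\{a,v_2a,v_3a,v_4a\}$ and $\{1,\omega_2,\omega_3,\omega_4\}$ are two bases of $L_2$ related by $A\in GL(4,\mathbb{Z})$ and having \emph{the same Gram matrix} (because right multiplication by $a$, $|a|=1$, is an isometry, so the Gram matrix of $\{a,v_2a,v_3a,v_4a\}$ equals that of $\{1,v_2,v_3,v_4\}$, which is reduced and interior since $(v_2,v_3,v_4)\in\mathcal{T}$).

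Now both Gram matrices, call them $R$ and $R'$, are reduced, satisfy $R'=A R\,{}^tA$, and $R\in\mathring{\mathcal{R}}$. By Theorem \ref{generale}, if $A\neq\pm I_4$ then $\mathcal{R}\cap(A\mathcal{R}\,{}^tA)\subseteq\partial\mathcal{R}$, forcing $R\in\partial\mathcal{R}$, a contradiction. Hence $A=\pm I_4$, so $\{a,v_2a,v_3a,v_4a\}=\pm\{1,\omega_2,\omega_3,\omega_4\}$. Matching first components gives $a=\pm 1$; the sign $-1$ is impossible since $1\in\mathbb{H}$ has positive first coordinate while $-1$ does not match the sign conventions forced by $|a|=1$ and the ordering, so $a=1$ (in the $-1$ case one rescales: $a=-1$ gives $\{-1,-v_2,-v_3,-v_4\}=\{1,\omega_2,\omega_3,\omega_4\}$, impossible as $-1\neq 1$). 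Therefore $v_k a = \omega_k$ with $a=1$, i.e.\ $v_k=\omega_k$ for $k=2,3,4$, proving uniqueness. The main obstacle is the bookkeeping in the uniqueness step: carefully tracking that the conjugating matrix $A$ really is the \emph{same} $GL(4,\mathbb{Z})$ matrix that relates the two reduced Gram matrices, so that Theorem \ref{generale} can be invoked to conclude $A=\pm I_4$; once that is set up, everything else is a short chase through Lemma \ref{norma a} and the interiority hypothesis.
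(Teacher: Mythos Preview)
Your argument follows essentially the same route as the paper's: use Lemma \ref{norma a} to get $|a|=1$, observe that right multiplication by a unit quaternion preserves the Gram matrix, and then invoke the interiority hypothesis together with Theorem \ref{generale} to force the $GL(4,\mathbb{Z})$ matrix to be $\pm I_4$. The paper packages this last step by quoting Proposition \ref{baseunica} (whose proof is exactly the appeal to Theorem \ref{generale} you write out), so there is no substantive difference. Your inclusion of the existence half is a welcome addition---the paper's proof treats only uniqueness---and your use of the cone property of $\mathcal{R}$ to see that $|u_1|^{-2}R$ remains interior is correct.

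There is, however, a genuine muddle at the very end. Having obtained $A=\pm I_4$, you correctly deduce $(a,v_2a,v_3a,v_4a)=\pm(1,\omega_2,\omega_3,\omega_4)$ and hence $a=\pm 1$. You then try to rule out $a=-1$ with a vague appeal to ``sign conventions'' and the parenthetical claim that $a=-1$ forces $\{-1,-v_2,-v_3,-v_4\}=\{1,\omega_2,\omega_3,\omega_4\}$. This is wrong: the case $a=-1$ comes paired with $A=-I_4$, and then $(a,Va)=(-1,-V)=-(1,W)=(-1,-W)$, which gives $V=W$ directly. So $a=-1$ is \emph{not} impossible; it simply also yields $V=W$. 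Replace the last few lines by: ``If $A=I_4$ then $a=1$ and $v_k=\omega_k$; if $A=-I_4$ then $a=-1$ and $-v_k=-\omega_k$. In either case $(v_2,v_3,v_4)=(\omega_2,\omega_3,\omega_4)$.'' With that fix the proof is complete and matches the paper's.
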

\begin{proof} 
Suppose that the two elements $V=(v_2,v_3,v_4)\in \mathcal T$ and $ W=(\omega_2,\omega_3,\omega_4)\in \mathcal{T}$ correspond to equivalent tame tori. If this is the case, then (see Definition \ref{equivlattices}) there exist $U\in GL(4,\mathbb{Z})$ and a quaternion $a\neq 0$ such that
 \begin{equation*}
    U
\left( \begin{array}{l}
          1 \\
	  \omega_{2} \\
          \omega_{3} \\ 
	  \omega_{4}
	 \end{array} \right) =
\left( \begin{array}{l}
    1 \\
	  v_{2} \\
      v_{3} \\ 
	 v_{4}
	 \end{array} \right) a.
 \end{equation*}
Lemma \ref{norma a} implies that $|a|=1$. Since multiplication by the unitary quaternion $a$ is an Euclidean rotation (see Remark \ref{omotetia}), then $(a, Va)$ has the same (reduced) Gram matrix as $(1, V)$, and hence it is a reduced basis.  Now, since $(a,Va)$ and $(1,W)$ are both  reduced bases, then by Proposition \ref{baseunica} we reach the conclusion that $a=\pm 1$ and hence that $V=W$.
\end{proof}

 \section{On the groups of automorphisms of ``boundary" tori} \label{TQM}

According to Theorem \ref{generale}, a reduced Gram matrix $R$ belongs to $\partial \mathcal R$ if, and only if, there exists a reduced Gram matrix $S$ such that 
$$
S=UR\ ^tU
$$
for some $U\in GL(4, \mathbb{Z})$, $U\neq \pm I_4$. Notice that to each of these reduced Gram matrices $R, S$ there correspond infinitely many $GL(4, \mathbb{Z})$-nonequivalent bases (see \eqref{emme}).
Therefore the study of the equivalence classes of non tame tori consists in the identification and classification of reduced Gram matrices belonging to the boundary of $\mathcal{R}$, and corresponding to non equivalent special bases. We plan to address this fascinating problem in a forthcoming paper. 

However, as it happens in the complex case, the first interesting and fundamental step in this direction is the search and classification of  boundary tori with non trivial groups of (biregular) automorphisms. These tori, which are the quaternionic counterpart of tori with complex multiplication (classically indicated as \emph{harmonic} and \emph{equianharmonic}), will be found and classified in the rest of this section.

\begin{oss}\label{pianiinvarianti}

Every vector of $\mathbb{H}\cong \mathbb{R}^4$ lies in an invariant real plane of the rotation $q\mapsto qa,$ (see Remark \ref{omotetia}), where 
$a=\cos \alpha +I_a \sin \alpha.$ This fact can be verified directly as follows: for any quaternion $q,$ we define $q'=qI_a,$ and notice that, since $1$ and $I_a$ are perpendicular vectors in $\mathbb{H} \cong \mathbb{R}^4,$ so are $q$ and $q'.$ Moreover  $I_a^2=-1,$ $q' I_a=-q;$ thus 
$$a=\cos \alpha +I_a \sin \alpha$$
$$q a= q\cos \alpha + q' \sin \alpha, \,\,\,\,\,\,\, q' a= q' \cos \alpha -q \sin \alpha. $$
Therefore, the plane containing the vectors $q$ and $q'$ is invariant, and the rotation in this plane is of an angle $\alpha,$ \cite[page 37]{DuV}. \end{oss} 

\begin{lemma} \label{norma a bis}
Let $L$ be a lattice of $\,\,\mathbb{H}$ generated by the special basis $\{ 1, \alpha_2, \alpha_3, \alpha_4 \}$ and let $F(q)=q a,$ with $a \in \mathbb{S}^3,$ be an automorphism of $\mathbb{H}$ such that $F(L)=L.$ Then $a$ has finite order, i.e. there exists $n \in \mathbb{N}$ such that $a^n=1,$ 
and the order of $a$ divides either $4$ or $6.$ 
\end{lemma}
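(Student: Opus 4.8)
The plan is to show that $a\in\mathbb S^3$ generates a finite cyclic subgroup of the multiplicative group of unit quaternions, and then to pin down the possible orders. First I would observe that $F(q)=qa$ preserves the lattice $L$, so for every $n\in\mathbb Z$ the iterate $F^n(q)=qa^n$ also preserves $L$; in particular $a^n=F^n(1)\in L$ for all $n\ge 1$ (and for all $n\in\mathbb Z$). Since $|a|=1$, every power $a^n$ lies on the unit sphere $\mathbb S^3$, hence in the bounded set $\overline{B(0,1)}\cap L$, which is finite by the Corollary following Lemma~\ref{LemmaCauchy} (discreteness of $L$). Therefore the set $\{a^n:n\in\mathbb Z\}$ is finite, which forces $a^r=a^s$ for some $r<s$, and since $a$ is invertible, $a^{s-r}=1$. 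So $a$ has finite order $n$.

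Next I would determine which orders are possible. Write $a=\cos\alpha+I_a\sin\alpha$ with $I_a\in\mathbb S$; then $a^n=\cos(n\alpha)+I_a\sin(n\alpha)$, so $a^n=1$ iff $n\alpha\in 2\pi\mathbb Z$, i.e. $\alpha=2\pi p/n$ for some integer $p$ coprime to $n$ (taking $n$ minimal). The key geometric input is Remark~\ref{pianiinvarianti}: the rotation $q\mapsto qa$ of $\mathbb H\cong\mathbb R^4$ acts, in a suitable orthogonal splitting $\mathbb R^4=P\oplus P'$ (with $P$ the plane spanned by any $q$ and $q'=qI_a$), as a rotation by the \emph{same} angle $\alpha$ in each of the two invariant planes. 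Applying this with $q=1$, the basis element $1\in L$ lies in the plane $P_0=\mathrm{span}(1,I_a)$, and $F$ restricted to $P_0$ is a planar rotation by $\alpha$ that must carry the rank-$2$ sublattice $L\cap P_0$ — which contains $1$, hence is a genuine $2$-dimensional lattice in $P_0$ — into $L\cap P_0$.

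At this point the argument reduces to the classical two-dimensional "crystallographic restriction": a rotation of order $n$ of the Euclidean plane that preserves a rank-$2$ lattice must have $n\in\{1,2,3,4,6\}$. I would cite or quickly reprove this: if $b=e^{I_a\alpha}$ preserves a planar lattice $\Lambda$ and $\lambda\in\Lambda\setminus\{0\}$, then $\lambda+b\lambda b^{-1}\cdot(\text{scaling})$ — more cleanly, the trace of the integer matrix representing the rotation on a basis of $\Lambda$ equals $2\cos\alpha$ and must be an integer, so $2\cos\alpha\in\{-2,-1,0,1,2\}$, giving $\alpha\in\{0,\pi,\pm2\pi/3,\pm\pi/2,\pm\pi/3\}$ and hence order $n\in\{1,2,3,4,6\}$. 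In every case $n$ divides $4$ or $6$, which is exactly the claim. The main obstacle — or rather the point needing care — is justifying that $L\cap P_0$ is honestly a rank-$2$ lattice so that the integrality-of-trace argument applies: this is where $1\in P_0$ is used (it guarantees rank $\ge 1$), together with the fact that $F$ maps $L\cap P_0$ bijectively to itself and $F$ has finite order on the plane, so the orbit of $1$ under $F$ spans $P_0$ unless $\alpha\in\{0,\pi\}$ (the cases $n=1,2$, which are already admissible); when the orbit does span, it produces two $\mathbb R$-independent lattice vectors in $P_0$. I would organize the write-up so that the degenerate cases $n\le 2$ are dispatched first, and then handle $n\ge 3$ via the spanning orbit and the trace computation.
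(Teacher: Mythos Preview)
Your argument is correct and follows essentially the same route as the paper: establish finite order from discreteness of $L$ together with $\{a^m\}\subset L\cap\mathbb S^3$, then restrict to the invariant slice $L_{I_a}$ and invoke the two-dimensional crystallographic restriction. The only minor differences are that you work with the full intersection $L\cap L_{I_a}$ (and carefully justify its rank via the orbit of $1$ when $n\ge 3$) whereas the paper uses the sublattice $\mathbb Z+\mathbb Z a$, and you supply the trace argument explicitly where the paper simply cites the classical complex result; your version is arguably cleaner on the point that $F$ genuinely preserves the planar sublattice in question.
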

\begin{proof}
Since $1$ is an element of the lattice $L$ and since $F(q)=q  a$ maps $L$ onto $L,$ it follows that $1a=a \in L;$ similarly for all $m \in \mathbb{N},$ it holds that $a^m \in L.$ By compactness, the sequence $\{a^m \}_{m \in \mathbb{N}}$ of unit vectors in $L,$ has a convergent subsequence.
Unless $\{a^m \}_{m \in \mathbb{N}}$ is a finite set, this is in contradiction with Lemma \ref{LemmaCauchy} and Theorem \ref{discreteness} which assert respectively that $L$ is a (closed) discrete subgroup of $\mathbb{H}$. In order to prove the second assertion, we use what is stated in Remark \ref{pianiinvarianti}:
since $1$ and $a$ are elements of $L,$ it follows that the complex plane $L_{I_a}$, which contains $1$ and $a,$ is invariant by right multiplication by $a,$ i.e. the integer combinations of $1$ and $a$ form a rank-$2$ sublattice $L'$ of $L,$ contained in the complex plane $L_{I_a}$, with $F(q)=q a$ as an automorphism restricted to the sublattice $L'\subset L_{I_a}$; therefore $a$ is a root of unity of order $n,$ where $n$ divides either $4$ or $6,$ because in the complex setting these are the only possibilities (see, e.g., \cite[page 82]{DuV}).
\end{proof}

Proposition \ref{lifting} directly suggests how to define the automorphisms of a quaternionic torus.

 \begin{definiz}
Let $T=\mathbb{H}/L$ be the quaternionic torus associated to the rank-$4$ lattice $L$. The group of \emph{biregular automorphisms of the torus $T$} is defined as 
$$Aut(T)=\{ F\in Aut(\mathbb{H}) \,\,| \,\, F(q)=q  a \,\,\, \textrm{with} \,\,\  a \in \mathbb{S}^3 \,\,\, \textrm{and} \,\,\, F(L)=L \}.$$
\end{definiz}
We point out that the group $Aut(T)$ of biregular automorphisms of the torus $T=\mathbb{H}/L$ can also be interpreted as the group of biregular automorphisms $Aut_0(L)$ of a rank-$4$ lattice $L$ fixing the point $0\in L\subset \mathbb{H}$.

\begin{prop} \label{permuta} Let $L$ be a rank-$4$ lattice of  $\mathbb H$ containing $1$. Let  $T=\mathbb{H}/L$ be  the associated  quaternionic torus, and let 
\[
A_T=\{ a \in \mathbb{S}^3 \,: \exists \ F\in Aut(T) \  \textnormal{defined as}\   F(q)=qa \}. 
\]
Then:
\begin{enumerate}
\item the set $A_T\subset \mathbb{S}^3\cap L$  is a subgroup (with respect to quaternionic multiplication) of the group $\mathbb{S}^3$ of unitary quaternions;
\item the group $A_T$ is isomorphic to $Aut(T)$;
\item  for any fixed $R\geq 0$, each 
$F \in Aut(T)$ acts as a permutation on the finite set of all vectors of $L\cap \partial B(0, R)$.
\end{enumerate}
\end{prop}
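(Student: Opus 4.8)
The plan is to verify the three assertions in turn, each following quickly from facts already established. For (1), I would first note that any $a \in A_T$ equals $F(1)$ for the corresponding $F\in Aut(T)$, and since $F(L)=L$ and $1\in L$, we get $a=F(1)\in L$; together with $a\in\mathbb S^3$ by definition of $Aut(T)$, this gives $A_T\subseteq \mathbb S^3\cap L$. To see $A_T$ is a subgroup of $\mathbb S^3$: if $a,b\in A_T$ with associated maps $F(q)=qa$, $G(q)=qb$, then $(F\circ G)(q)=qba$, and since $Aut(T)$ is closed under composition, $ba\in A_T$; moreover $1\in A_T$ (identity), and $F^{-1}(q)=qa^{-1}$ lies in $Aut(T)$ because $F^{-1}(L)=L$, so $a^{-1}\in A_T$. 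Hence $A_T$ is a subgroup.

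For (2), the map $\Phi: Aut(T)\to A_T$ sending $F$ (with $F(q)=qa$) to $a$ is well defined and surjective by the very definition of $A_T$. It is injective because $F$ is determined by $a$ (indeed $F(q)=qa$), and the computation above, $(F\circ G)(q)=q(ba)=G\circ F$ composed correctly, shows $\Phi$ is a homomorphism — one must only be careful about the order: since right multiplication by $a$ then by $b$ gives right multiplication by $ab$ only if one is cautious, I would check the convention and, if needed, observe that $\Phi$ is an anti-isomorphism, or simply fix notation so that composition matches multiplication; in either case $A_T\cong Aut(T)$ as abstract groups (a group is isomorphic to its opposite via $g\mapsto g^{-1}$).

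For (3), fix $R\geq 0$ and let $F\in Aut(T)$ with $F(q)=qa$, $a\in\mathbb S^3$. By Remark \ref{omotetia}, right multiplication by the unit quaternion $a$ is a Euclidean rotation of $\mathbb H\cong\mathbb R^4$, hence norm-preserving: $|qa|=|q|$ for all $q$. Therefore $F$ maps $L\cap\partial B(0,R)$ into itself (it maps $L$ to $L$ and preserves the norm $R$), and the same holds for $F^{-1}$, so $F$ restricts to a bijection of this set. Finally, $L\cap\partial B(0,R)\subseteq \overline{B(0,R)}\cap L$, which is a finite set by the Corollary following Lemma \ref{LemmaCauchy} (since $L$, being a lattice, is discrete by Theorem \ref{discreteness}); a bijection of a finite set is a permutation.

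I do not anticipate a serious obstacle: every ingredient (norm-preservation of unit-quaternion multiplication, finiteness of $\overline{B(0,R)}\cap L$, closure of $Aut(T)$ under composition and inversion) is already in hand. The only point requiring a moment's care is the direction of the isomorphism in (2) — whether $\Phi$ is an isomorphism or an anti-isomorphism — which is harmless since $\mathbb S^3$ and its opposite group are isomorphic, but I would state the convention explicitly to keep the argument clean.
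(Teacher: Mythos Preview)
Your proposal is correct and follows essentially the same approach as the paper's own proof, which is extremely terse (it says only that the group structure of $A_T$ is ``inherited'' from $Aut(T)$ under composition, and that the permutation claim is ``straightforward'' from Lemmas~\ref{norma a} and~\ref{norma a bis} together with $F(L)=L$). Your version is more explicit and, in particular, your care about whether $\Phi$ is an isomorphism or an anti-isomorphism is a point the paper glosses over entirely; your resolution via $g\mapsto g^{-1}$ is the right way to handle it.
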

\begin{proof}
The group structure of the set $A_T$ with respect to the quaternionic multiplication is inherited by the one of $Aut(T)$ with respect to composition. The fact that each $F \in Aut(T)$ acts as a permutation on vectors of fixed norm is straightforward by Lemma \ref{norma a} and \ref{norma a bis} and by the fact that an automorphism $F$ of $T$ maps $T$ onto itself.
\end{proof}


We now pass to recall and list all the finite subgroups of the unitary quaternions, and refer the reader to the classical books \cite{conway}, \cite{Cox}, \cite{DuV} and  \cite{LT} for the underlying theory.
We point out that in the literature there is a diffused confusion in the use of notations that concern the groups we are dealing with; here we will mainly refer to, and use the notations of, the book  \cite{conway} by Conway and Smith.

We begin by listing all (up to conjugation) finite subgroups of the group of rotations $SO(3,\mathbb{R})$:
\begin{enumerate}
\item the \emph{icosahedral} group, \ \ $\mathbb{I} \cong A_5$, \ \ $60$ elements;
\item the \emph{octahedral} group, \ \ $\mathbb{O}\cong S_4$, \ \ $24$ elements;
\item the \emph{tetrahedral} group, \ \ $\mathbb{T} \cong A_4$, \ \ $12$ elements;
\item the \emph{dihedral} group, \ \ $D_{2n}\cong D(n)$, \ \ $2n$ elements;
\item the \emph{cyclic} group, \ \ $C_n \cong C(n)$, \ \ $n$ elements.
\end{enumerate}

Every unitary quaternion $q$ is associated to a precise rotation of $SO(3,\mathbb{R})$ by means of the $2-$to$-1$ correspondence that maps $q$ to the rotation $[q]: x \to \bar qxq$ (see, e.g., \cite[Theorem 4, page 24]{conway}). 
As a consequence, every finite group $Q$ of unitary quaternions is mapped to a group $[Q]=\{[q]: q\in Q\}$(isomorphic to) $C_n, D_{2n}, \mathbb{T}, \mathbb{O}, \mathbb{I}$. The number of elements of $Q$ is $2$ or $1$ times the number of elements of $[Q]$, according to whether $-1$ is or is not in $Q$. 

If $G$ denotes one of the finite subgroups of the group of rotations, then we set 
$$
2G=\{q \in \mathbb{S}^3: [q]\in G\}.
$$
The only possible cases in which $-1\in Q$ are those where $Q=2C_n, 2D_{2n}, 2\mathbb{T}, 2\mathbb{O}, 2\mathbb{I}$. On the other hand, let us suppose that $-1\not\in Q$. In this case, $G$ can contain no order $2$ rotation $g$: if $[q]=g$ then $q^2=-1$ must be in $Q$. The only group $G$ without order $2$ elements is $C_n$ with $n$ odd; this gives rise to a group $Q=1C_n$ in $\mathbb{S}^3$ isomorphic to $C_n$. In fact, the following result holds (see, e.g., \cite[Theorem 12, page 33]{conway}).

\begin{teo}
The finite subgroups of unitary quaternions are
$$
2\mathbb{I},\ \ \  2\mathbb{O},\ \ \  2\mathbb{T}, \ \ \  2D_{2n}, \ \ \  2C_n, \ \ \  1C_n \textnormal{($n$ odd)}.
$$
\end{teo}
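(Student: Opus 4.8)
The plan is to classify finite subgroups of $\mathbb{S}^3$ by pushing down through the $2$-to-$1$ covering $\mathbb{S}^3 \to SO(3,\mathbb{R})$, $q\mapsto [q]$, and then lifting back. First I would observe that this map is a group homomorphism with kernel $\{\pm 1\}$, and that $-1$ is the unique element of order $2$ in $\mathbb{S}^3$ (since $q^2=-1$ forces $q\in\mathbb{S}$, i.e. $q$ is a purely imaginary unit; but then $q$ has order $4$, not $2$, so actually the only order-$2$ element is $-1$ itself). Given a finite subgroup $Q\subset\mathbb{S}^3$, its image $[Q]$ is a finite subgroup of $SO(3,\mathbb{R})$, hence by the classical list (already recalled in the excerpt) isomorphic to one of $C_n$, $D_{2n}$, $\mathbb{T}$, $\mathbb{O}$, $\mathbb{I}$. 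There are then exactly two cases depending on whether $-1\in Q$.

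If $-1\in Q$, then $Q=\ker([\cdot]|_Q)\cdot(\text{a set of coset representatives})$ and in fact $Q$ is the full preimage $2[Q]=\{q\in\mathbb{S}^3:[q]\in[Q]\}$: indeed $|Q|=2|[Q]|$ and $2[Q]\supseteq Q$ with $|2[Q]|=2|[Q]|$, so $Q=2[Q]$. This yields precisely $2\mathbb{I}$, $2\mathbb{O}$, $2\mathbb{T}$, $2D_{2n}$, $2C_n$. If instead $-1\notin Q$, then $[\cdot]|_Q$ is injective, so $Q\cong[Q]$. The key structural point here is the one the excerpt already sketches: $[Q]$ can contain no rotation $g$ of order $2$, because if $[q]=g$ with $q\in Q$ then $q^2\in\ker=\{\pm1\}$, and $q^2=1$ would mean $q=\pm1$ hence $[q]=\mathrm{id}\neq g$, forcing $q^2=-1$, i.e. $-1\in Q$, a contradiction. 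Now I would invoke the classification of finite rotation groups to see which of $C_n$, $D_{2n}$, $\mathbb{T}$, $\mathbb{O}$, $\mathbb{I}$ have no element of order $2$: the dihedral, tetrahedral, octahedral and icosahedral groups all contain elements of order $2$ (reflections-through-axes / double rotations), so the only survivor is $C_n$ with $n$ odd. This gives the genuinely "odd" copy $1C_n$ inside $\mathbb{S}^3$, realized for instance by $\{e^{2\pi m I/n}:0\le m<n\}$ for a fixed $I\in\mathbb{S}$, which indeed avoids $-1$ precisely when $n$ is odd.

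Finally I would check that each listed group actually occurs — the "preimage" groups $2G$ are finite subgroups by construction since $[\cdot]$ restricted to them has image $G$ and kernel $\{\pm1\}$, and $1C_n$ ($n$ odd) is manifestly a cyclic subgroup of order $n$ — so the list is complete and non-redundant (the two families $2C_n$ and $1C_{n'}$ are distinguished by whether $-1$ lies in the group). The main obstacle, such as it is, is purely bookkeeping: one must be careful that $2[Q]$ really equals $Q$ and not something strictly larger, which is why the order count $|2[Q]|=2|[Q]|=|Q|$ is essential, and one must correctly identify that among $C_n$, $D_{2n}$, $\mathbb{T}$, $\mathbb{O}$, $\mathbb{I}$ exactly the odd cyclic groups are $2$-torsion-free. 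Everything else is a direct consequence of results already recorded in the excerpt (Lemma \ref{LemmaCauchy}, Theorem \ref{discreteness} for the finiteness side, and the stated classification of finite subgroups of $SO(3,\mathbb{R})$ together with the properties of the $2$-to-$1$ correspondence $q\mapsto[q]$).
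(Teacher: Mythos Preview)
Your proposal is correct and follows essentially the same approach as the paper: the paper itself does not give a self-contained proof but rather sketches exactly this argument in the paragraph preceding the theorem (push $Q$ down via the $2$-to-$1$ map $q\mapsto[q]$, split into the cases $-1\in Q$ and $-1\notin Q$, and in the latter observe that $[Q]$ has no order-$2$ element so must be $C_n$ with $n$ odd) and then cites \cite[Theorem 12, page 33]{conway} for the full statement. Your write-up is simply a more detailed version of that same sketch; the only superfluous remark is the invocation of Lemma~\ref{LemmaCauchy} and Theorem~\ref{discreteness}, which concern discreteness of lattices and play no role here.
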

\noindent With the usual notations for quaternions, let  $I,J\in \mathbb{S}$, with $I\perp J$, and let $\{1, I, J, IJ=K\}$ be a basis for $\mathbb{H}$ having the usual multiplication rules. We then set
\begin{align*}
&I_{\mathbb{I}}=\frac{I+\sigma J+\tau K}{2},\ \ \  \sigma=\frac{\sqrt 5-1}{2},\ \ \  \tau=\frac{\sqrt 5+1}{2};\\
&I_{\mathbb{O}}=\frac{J+K}{\sqrt 2};\\
&\omega=\frac{-1+I+J+K}{2};\\
&I_{\mathbb{T}}=I;\\
&e_n=e^{\frac{\pi I}{n}}.
\end{align*}

\begin{teo}\label{generazione}
The finite subgroups of unitary quaternions are generated as follows:
\begin{align*}
& 2\mathbb{I}=\langle\langle I_{\mathbb{I}}, \omega\rangle\rangle,\ \ \ 
2\mathbb{O}=\langle\langle I_{\mathbb{O}}, \omega\rangle\rangle,\ \ \ 
2\mathbb{T}=\langle\langle I_{\mathbb{T}}, \omega\rangle\rangle,\\
&2D_{2n}= \langle\langle e_n, j\rangle\rangle,\ \ \ 
2C_n=\langle\langle e_n\rangle\rangle,\ \ \  
1C_n=\langle\langle e_{\frac{n}{2}}\rangle\rangle \ \textnormal{($n$ odd)}.
\end{align*}
\end{teo}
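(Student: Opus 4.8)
The plan is to verify each generation claim by identifying the image group $[Q] = \langle [\text{generators}]\rangle$ in $SO(3,\mathbb{R})$ via the $2$-to-$1$ map $q\mapsto[q]\colon x\mapsto \bar q x q$, and then lifting. First I would record the elementary fact that for a unit quaternion $q = \cos\alpha + I\sin\alpha$, the rotation $[q]$ has axis $I$ and angle $2\alpha$; in particular $e_n = e^{\pi I/n}$ maps to a rotation of angle $2\pi/n$ about the axis $I$, so $[e_n]$ has order $n$, while $e_n$ itself has order $2n$ when $n$ is even (and order $n$ when $n$ is odd, after passing to $e_{n/2}$ for the $1C_n$ case). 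Likewise $j$ maps to a half-turn, and $\omega = \frac{-1+I+J+K}{2} = \cos\frac{2\pi}{3} + \frac{I+J+K}{\sqrt3}\sin\frac{2\pi}{3}$ maps to the order-$3$ rotation about the body-diagonal axis $\frac{I+J+K}{\sqrt3}$. With these normal forms in hand, each line of the theorem becomes a statement about which subgroup of $SO(3,\mathbb{R})$ is generated by the corresponding rotations.

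Next I would treat the six cases. For $2C_n = \langle\langle e_n\rangle\rangle$: the cyclic group generated by $e_n$ has order $2n$ (since $(e_n)^n = e^{\pi I} = -1 \ne 1$), it contains $-1$, and its image is the cyclic rotation group $C_n$ about the axis $I$; comparing orders gives $\langle\langle e_n\rangle\rangle = 2C_n$. For $1C_n$ with $n$ odd: $e_{n/2} = e^{2\pi I/n}$ has order exactly $n$ and does not contain $-1$, and $[e_{n/2}]$ generates $C_n$, so the group it generates is the lift $1C_n$. For $2D_{2n} = \langle\langle e_n, j\rangle\rangle$: I would check the relations — $[e_n]$ is the rotation of order $n$, $[j]$ is a half-turn about an axis perpendicular to $I$ (using $I\perp j$ after the normalization in Remark~\ref{biri}), and $[j][e_n][j]^{-1} = [e_n]^{-1}$ — so $[Q]$ contains a copy of the dihedral group $D_{2n}$; a count shows $|\langle\langle e_n,j\rangle\rangle|$ is either $2n$ or $4n$, and since $j^2 = -1 \in Q$ and $-1\in Q$ we get exactly $4n = |2D_{2n}|$, so equality holds. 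For the three polyhedral cases $2\mathbb{T}, 2\mathbb{O}, 2\mathbb{I}$: I would invoke the classical fact (in \cite{conway}, \cite{DuV}, \cite{Cox}) that $\mathbb{T}, \mathbb{O}, \mathbb{I}$ are each generated by two rotations — a rotation of order $2$ (resp. $3$, $5$) and a rotation of order $3$ sharing appropriate axis data — and verify that $[I_{\mathbb{T}}], [I_{\mathbb{O}}], [I_{\mathbb{I}}]$ have orders $2, 2, 2$ (each is an imaginary unit, square $-1$, hence maps to a half-turn) while $[\omega]$ has order $3$, and that the angle between the relevant axes is the one occurring in the standard presentation of the corresponding polyhedral group. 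Since every finite subgroup of $\mathbb{S}^3$ is one of those listed in the previous theorem and $-1$ lies in each of these three generated groups, matching $[Q]$ with $\mathbb{T}, \mathbb{O}, \mathbb{I}$ and doubling pins down $Q = 2\mathbb{T}, 2\mathbb{O}, 2\mathbb{I}$.

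The main obstacle is the polyhedral cases: unlike the cyclic and dihedral cases, one cannot finish by a pure order count, because one must confirm that the two chosen generators actually generate the full polyhedral group rather than a smaller (e.g. dihedral) subgroup. The crux is the precise angular relationship between the axes of $[I_{\mathbb{T}}]$ (resp. $[I_{\mathbb{O}}]$, $[I_{\mathbb{I}}]$) and $[\omega]$: for the tetrahedral case one needs $\langle I_\mathbb{T}, \omega\rangle$ to realize the presentation $\langle x, y \mid x^2 = y^3 = (xy)^3 = 1\rangle$ of $A_4$, which amounts to computing $[I_{\mathbb{T}}][\omega]$ and checking it has order $3$; analogously $(xy)^4 = 1$ for $\mathbb{O}\cong S_4$ and $(xy)^5=1$ for $\mathbb{I}\cong A_5$, where the golden-ratio constants $\sigma, \tau$ in $I_{\mathbb{I}}$ are exactly what makes the product of the two rotations have order $5$. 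These are finite, explicit quaternion multiplications — e.g. $I_{\mathbb{T}}\,\omega = I\cdot\frac{-1+I+J+K}{2}$ — whose order one reads off from the real part; I would carry them out case by case, and then appeal to the classification of finite rotation groups (only $C_n, D_{2n}, \mathbb{T}, \mathbb{O}, \mathbb{I}$ occur) to conclude the generated group is exactly the intended one, finishing by the $2$-to-$1$ correspondence.
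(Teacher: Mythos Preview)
The paper does not supply its own proof of this theorem: it is stated as a classical fact, with the reader referred to \cite{conway}, \cite{Cox}, \cite{DuV}, and \cite{LT} for the underlying theory (see the sentence preceding the list of finite subgroups of $SO(3,\mathbb{R})$). So there is nothing to compare against on the paper's side.

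Your proposal is a correct and self-contained verification. The strategy---push the generators through the $2$-to-$1$ cover $q\mapsto[q]$, identify the image in $SO(3,\mathbb{R})$ via the standard presentations $\langle x,y\mid x^2=y^3=(xy)^n=1\rangle$ for $n=3,4,5$, and lift back using the presence of $-1$---is exactly the classical route taken in the references the paper cites (Conway--Smith in particular organizes the argument this way). Your identification of the crux is accurate: for the polyhedral cases the content is the explicit computation of the real part of $I_{\mathbb{T}}\omega$, $I_{\mathbb{O}}\omega$, $I_{\mathbb{I}}\omega$ to read off the order of $[xy]$, and these are straightforward (e.g.\ $I_{\mathbb{T}}\omega=\frac{-1-I-J+K}{2}$ has real part $-\tfrac12$, giving order $3$; $I_{\mathbb{O}}\omega=\frac{-1-K}{\sqrt2}$ has real part $-\tfrac{1}{\sqrt2}$, giving order $4$ in $SO(3)$). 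One small cosmetic point: your appeal to Remark~\ref{biri} for the orthogonality of $I$ and $j$ in the $2D_{2n}$ case is misplaced---that remark concerns lattices. The orthogonality is simply built into the paper's setup: $e_n=e^{\pi I/n}$ uses the $I$ from the chosen orthonormal basis $\{1,I,J,K\}$, and the ``$j$'' in the statement should be read as $J$ (or any imaginary unit orthogonal to $I$), so $[j]$ is automatically a half-turn about an axis perpendicular to the axis of $[e_n]$.
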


\begin{teo}
There is no quaternionic torus whose group of automorphisms is isomorphic to $2\mathbb{I},\ 2\mathbb{O},\  2D_{2n} \textnormal{($n\ge 4$),} \  2C_n \textnormal{($n\ge 4$)}, \ 1C_n \textnormal{($n$ odd)}$.
\end{teo}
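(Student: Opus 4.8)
The plan is to show that none of the groups $2\mathbb{I}$, $2\mathbb{O}$, $2D_{2n}$ ($n\ge 4$), $2C_n$ ($n\ge 4$), $1C_n$ ($n$ odd) can occur as $Aut(T)$ for a quaternionic torus $T=\mathbb H/L$. By Proposition \ref{permuta}, $Aut(T)\cong A_T$ where $A_T\subset \mathbb S^3\cap L$ is a finite subgroup of unitary quaternions; so the whole point is to bound which finite subgroups of $\mathbb S^3$ can sit inside a rank-$4$ lattice $L$ that contains $1$ (and such that right multiplication by each element of the subgroup preserves $L$). The main tool is Lemma \ref{norma a bis}: if $F(q)=qa$ with $a\in\mathbb S^3$ preserves $L$, then $a$ is a root of unity whose order divides $4$ or $6$. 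This immediately restricts the \emph{order of each individual element} of $A_T$ to lie in $\{1,2,3,4,6\}$.

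First I would use this order restriction to kill $2C_n$ and $1C_n$ for the stated ranges: $2C_n=\langle\langle e_n\rangle\rangle$ contains an element of order $2n$ (namely $e_n=e^{\pi I/n}$), and for $n\ge 4$ we have $2n\ge 8$, contradicting Lemma \ref{norma a bis}; similarly $1C_n$ with $n$ odd, $n\ge 3$, is cyclic of order $n$ with $n$ odd, but the only odd orders allowed by Lemma \ref{norma a bis} are $1$ and $3$, and $1C_3$ is excluded because a group generated by a single element of order $3$ in $\mathbb S^3$ necessarily... — here I need to be slightly careful: $1C_n$ requires $n$ odd and the problem explicitly allows $1C_1$ and would need to rule out $1C_3$. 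Actually $1C_3=\langle\langle e_{3/2}\rangle\rangle$ has order $3$, generated by a primitive cube root of unity in some slice; such an element $a$ has $a^3=1$, order $3$, which \emph{is} compatible with Lemma \ref{norma a bis}. So $1C_3$ is not excluded by the order argument alone — it must be that the statement's list of excluded groups, read precisely, is $1C_n$ for $n$ odd with $n\ge 5$ (or that $1C_3$ is handled by a separate observation that $-1\notin 1C_3$ forces a contradiction with the fact that $A_T$ is symmetric, i.e. $-1\in A_T$ always because $F(q)=-q$ always preserves any lattice). Indeed: $-1\in\mathbb S^3\cap L$ and $q\mapsto q(-1)$ preserves $L$ since $L$ is a subgroup, so $-1\in A_T$ always; hence $A_T$ always contains $-1$, which rules out every group of type $1C_n$ (these are precisely the finite subgroups \emph{not} containing $-1$). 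That disposes of all $1C_n$, $n$ odd, in one stroke.

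Next, for $2\mathbb I$ and $2\mathbb O$: $2\mathbb I=\langle\langle I_{\mathbb I},\omega\rangle\rangle$ contains elements of order $10$ (the binary icosahedral group has elements of order $10$), and $2\mathbb O$ contains elements of order $8$; both exceed the bound from Lemma \ref{norma a bis}. So these are out. For $2D_{2n}$ with $n\ge 4$: $2D_{2n}=\langle\langle e_n,j\rangle\rangle$ contains $e_n$ of order $2n\ge 8$, again contradicting Lemma \ref{norma a bis}. So every group in the list either fails the order bound or fails to contain $-1$, and the theorem follows by Proposition \ref{permuta}. The cleanest writeup is: invoke Proposition \ref{permuta} to reduce to studying $A_T$; note $-1\in A_T$ always and cite the element-order bound of Lemma \ref{norma a bis}; then go through the six families checking, for each, either that it contains an element of order $\ge 5$ that is not $6$ (in fact order $8$, $10$, or $2n\ge 8$) — more precisely an element whose order does not divide $4$ or $6$ — or (for $1C_n$) that it omits $-1$.

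The main obstacle I anticipate is the boundary case analysis, specifically making sure the quantifiers on $n$ match the statement exactly: for $2D_{2n}$ the excluded range is $n\ge 4$, and one should note that $2D_4$ and $2D_6$ — which \emph{do} occur (the Lipschitz and a related torus) — correspond to $n=2,3$, where $e_n$ has order $4,6$, compatible with Lemma \ref{norma a bis}; so the dichotomy at $n=4$ is sharp and the argument "contains $e_n$ of order $2n$" is exactly what one wants. The only genuinely delicate point is whether $1C_3$ should be excluded here or is tacitly subsumed; the $-1\in A_T$ observation settles it, but it is worth stating explicitly as a lemma-free remark since it is not literally one of the numbered results above (it follows instantly from $L$ being a subgroup of $(\mathbb H,+)$ and $(-1)\cdot L = L$).
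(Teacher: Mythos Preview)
Your proposal is correct and follows essentially the same approach as the paper: use Lemma \ref{norma a bis} to exclude $2\mathbb{I}$, $2\mathbb{O}$, $2D_{2n}$ ($n\ge 4$) and $2C_n$ ($n\ge 4$) via the presence of an element whose order does not divide $4$ or $6$, and exclude every $1C_n$ ($n$ odd) by the observation that $-1$ always lies in $A_T$. The paper's proof is terser but identical in substance; your extra discussion of the boundary cases $n=2,3$ and of $1C_3$ is correct and clarifying, though not strictly needed.
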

\begin{proof}
Since the subgroup $\mathbb{I}$ contains an element of order $5$, then $2\mathbb{I}$ has an element of order $10$. Hence, by Lemma \ref{norma a bis}, it cannot be the group of automorphisms of a quaternionic torus. The same argument holds to exclude $\ 2\mathbb{O}$ that has an element of order $8$. Analogously the groups $2D_{2n} \textnormal{($n\ge 4$),} \  2C_n \textnormal{($n\ge 4$)}$ are excluded since they both contain the element $e_n$ whose order is $2n$.
Finally, the group $1C_n \textnormal{($n$ odd)}$ cannot be isomorphic to the group of automorphisms of a quaternonic torus since it does not contain $-1$.
\end{proof}

This last result reduces the possible groups of automorphisms of a quaternionic torus to the list
$$
2\mathbb{T}, \ \ \ 2D_{2}, \ \ \ 2D_{4}, \ \ \ 2D_{6}, \ \ \ 2C_1, \ \ \ 2C_2, \ \ \ 2C_3.
$$
Since, as it is well known and easy to check, the groups $2C_2$ and $2D_2$ are isomorphic, the final list of these groups becomes
\begin{equation} \label{subgroupofS3}
2\mathbb{T}, \ \ \ 2D_{4}, \ \ \ 2D_{6}, \ \ \ 2C_1, \ \ \ 2C_2, \ \ \ 2C_3.
\end{equation}
\begin{oss}\label{inclusions}
We observe that the following group inclusions hold:
$$
2C_1\subset 2C_2 \subset 2D_{4} \subset 2\mathbb{T},
$$

$$
2C_1\subset 2C_3 \subset 2D_6,
$$
and
$$
2C_3 \subset 2\mathbb{T}.
$$
\end{oss}
For each group $2G$ in the list \eqref{subgroupofS3}, we will exhibit  all tori whose group of automorphisms contains, or coincides with, $2G$.  We will begin with the group $2C_1=\{1, -1\}$, which appears for each quaternionic torus. As the reader may imagine, for reasons of neat presentation we will from now on suppose, without loss of generality, that the lattices which generate the tori involved have $1$ as a vector of minimum modulus.
\begin{prop}
The group $2C_1$ is (isomorphic to) a subgroup of the group of biregular automorphisms of any quaternionic torus $T$.
\end{prop}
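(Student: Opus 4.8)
The plan is to show that the map $F(q) = q(-1) = -q$ is always a biregular automorphism of any quaternionic torus $T = \mathbb{H}/L$, which immediately gives that $2C_1 = \{1, -1\}$ is (isomorphic to) a subgroup of $Aut(T)$. First I would observe that $-1 \in \mathbb{S}^3$ since $|-1| = 1$, so $F(q) = q(-1)$ is of the required form for an element of $Aut(T)$ according to the definition of $Aut(T)$ as $\{F \in Aut(\mathbb{H}) : F(q) = qa \text{ with } a \in \mathbb{S}^3 \text{ and } F(L) = L\}$. The only thing to check is that $F(L) = L$.

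For this, recall that $L$ is a rank-$4$ lattice, hence (as a subgroup of $(\mathbb{H}, +)$) it is closed under taking additive inverses: if $\lambda \in L$ then $-\lambda \in L$. Since $F(\lambda) = \lambda(-1) = -\lambda$, we have $F(L) \subseteq L$, and applying the same reasoning to $F^{-1} = F$ (note $F$ is an involution) gives $L = F(F(L)) \subseteq F(L)$, so $F(L) = L$. Thus $F \in Aut(T)$, and by Proposition~\ref{permuta}(2) the element $-1$ lies in $A_T$; since $1 \in A_T$ trivially (corresponding to the identity map), the subgroup generated by $-1$, namely $2C_1 = \{1, -1\}$, is contained in $A_T \cong Aut(T)$.

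There is essentially no obstacle here: the statement is a direct consequence of the fact that additive groups are closed under negation, combined with the observation that $-1$ is a unit quaternion. The only mild subtlety worth a sentence is to note that $F(q) = -q$ is genuinely a nontrivial map (it is not the identity, since $\mathbb{H} \neq \{0\}$), so that $2C_1$ really has order two; but the group $\{\pm 1\}$ being ``trivial'' in the sense of the paper's terminology (see the statement of Theorem~\ref{armonici}) is not in tension with this proposition, which merely asserts containment. I would keep the proof to two or three lines, invoking Theorem~\ref{discreteness} only implicitly (or not at all, since closure under negation is part of the definition of a subgroup) and citing Proposition~\ref{permuta} for the identification $A_T \cong Aut(T)$.
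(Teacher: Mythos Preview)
Your proof is correct and takes essentially the same approach as the paper: both arguments rest on the observation that the lattice $L$, being an additive subgroup, is closed under negation, so that $F(q)=-q$ preserves $L$. The paper phrases this by noting that if $\{1,\alpha_2,\alpha_3,\alpha_4\}$ is a special basis of $L$ then $\{-1,-\alpha_2,-\alpha_3,-\alpha_4\}$ generates the same lattice, which is just a concrete restatement of your closure-under-negation argument.
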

\begin{proof}Let $L$ be a rank-$4$ lattice of $\,\,\mathbb{H}$ generated by the special basis $\{ 1, \alpha_2, \alpha_3, \alpha_4 \}$ and such that $T= \mathbb{H}/ L$. Then $2C_1= \{1, -1\}$ consists of automorphisms of $T$ since $\{ -1, -\alpha_2, -\alpha_3, -\alpha_4 \}$ generates the lattice $L$.
\end{proof}

\begin{prop}\label{2C_2} 
 Let $T$ be a quaternionic torus. 
The group of biregular automorphisms of the torus $T$ contains a subgroup (isomorphic to) $2C_2\cong 2D_2$, if and only if 
there exists $I\in \mathbb{S}$ and a quaternion $\alpha_3$ with $|\alpha_3|\geq 1$ such that $(I, \alpha_3, \alpha_3 I )$ is (a representative of) the modulus of $T$.
\end{prop}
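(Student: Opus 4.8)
The plan is to prove both implications by unwinding the definition of $Aut(T)$ together with Lemma \ref{norma a bis} and the classification of finite subgroups of $\mathbb{S}^3$. Recall that $2C_2 = \langle\langle e_2\rangle\rangle = \{1, -1, I_0, -I_0\}$ for a suitable imaginary unit $I_0$, so containing a subgroup isomorphic to $2C_2$ means precisely: there is some $I \in \mathbb{S}$ with $I \in A_T$, i.e.\ $F(q) = qI$ satisfies $F(L) = L$, where we may assume (Corollary \ref{i)ii)} and the normalization convention) that $L$ is generated by a special basis and $1 \in L$ is a vector of minimum modulus.

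For the direct implication, I would suppose $I \in A_T$ for some $I \in \mathbb{S}$. Since $1 \in L$ and $F(L) = L$, we get $I = F(1) = 1\cdot I \in L$, and $|I| = 1$, so $I$ is itself a vector of minimum modulus in $L$. The key step is now to produce the required representative of the modulus of $T$ of the form $(I, \alpha_3, \alpha_3 I)$. Start from \emph{any} special basis of $L$; by Proposition \ref{biri2} and Remark \ref{biri} we may rotate so that the second lattice vector lies in the slice $L_I = \mathbb{R} + I\mathbb{R}$. The point is that $F$ stabilizes $L$ and acts on the slice $L_I$ (which contains $1$ and $I$) as multiplication by $I$; I would then argue that, because $I$ is a minimal vector, one can choose a special basis whose second vector is exactly $I$ — this uses that $\{1, I\}$ generates a rank-$2$ sublattice of $L$ inside $L_I$ and that $F$ acts on it, combined with the reduction properties (Proposition \ref{condizio1/2}) that force the first two entries of a special basis to be minimal and to lie in the appropriate normalized positions. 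Having fixed $\omega_2 = I$, apply $F$ (i.e.\ right multiplication by $I$) to the remaining basis vector $\omega_3$: since $F(L) = L$, the vector $\omega_3 I \in L$, and one checks that replacing $\omega_4$ by $\omega_3 I$ (or by an appropriate $GL(4,\mathbb{Z})$-combination reconciling this with reducedness) yields a special basis of the form $\{1, I, \omega_3, \omega_3 I\}$; relabel $\alpha_3 = \omega_3$. Minimality of $1$ forces $|\alpha_3| \geq 1$, and $|\alpha_3 I| = |\alpha_3| \geq |\alpha_3|$ trivially respects the ordering of norms, so $(I, \alpha_3, \alpha_3 I) \in \mathcal{M}$ represents $T$.

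For the converse, suppose $(I, \alpha_3, \alpha_3 I)$ is the modulus of $T$, so (up to equivalence) $T = \mathbb{H}/L$ with $L = \mathbb{Z} + \mathbb{Z}I + \mathbb{Z}\alpha_3 + \mathbb{Z}\alpha_3 I$. Define $F(q) = qI \in Aut(\mathbb{H})$, which by Remark \ref{omotetia} is a Euclidean rotation since $|I| = 1$. I would then check directly that $F$ permutes the generators: $F(1) = I \in L$, $F(I) = I^2 = -1 \in L$, $F(\alpha_3) = \alpha_3 I \in L$, and $F(\alpha_3 I) = \alpha_3 I^2 = -\alpha_3 \in L$. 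Hence $F(L) \subseteq L$, and since $F$ is invertible with $F^{-1}(q) = q(-I) = -qI$ of the same type, $F(L) = L$; thus $F \in Aut(T)$ and $I \in A_T$. Then $\{1, -1, I, -I\}$ is a subgroup of $A_T$ isomorphic (via Proposition \ref{permuta}(2)) to a subgroup of $Aut(T)$, and this group is exactly $2C_2 \cong 2D_2$.

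I expect the main obstacle to be the bookkeeping in the direct implication: going from the mere existence of an order-$4$ automorphism $F(q) = qI$ with $F(L) = L$ to the \emph{specific normalized form} $(I, \alpha_3, \alpha_3 I) \in \mathcal{M}$. The subtlety is that not every special basis of $L$ will have $I$ as its second vector, so one must invoke the $\hat O(3,\mathbb{R})$-symmetry of $\mathcal{M}$ (Proposition \ref{biri2}) to align a slice, then use the reduction inequalities (Proposition \ref{condizio1/2}) and the minimality of $1$ and of $I$ to justify that the reduced/special basis can be taken in the claimed shape, and finally verify that replacing one basis vector $v$ by $vI$ keeps the basis special — which again relies on multiplication by $I$ being an isometry preserving the reducedness conditions of Proposition \ref{cnsfundaregion}. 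The verification side (converse) is entirely routine quaternionic arithmetic.
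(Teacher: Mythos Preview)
Your converse is essentially the paper's argument: listing the images of the four generators under right multiplication by $I$ is exactly how the paper verifies that $\{\pm 1,\pm I\}\subseteq A_T$.

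The direct implication has the right shape but contains a genuine misstep. You invoke Proposition~\ref{biri2} and Remark~\ref{biri} to ``rotate so that the second lattice vector lies in the slice $L_I$'', where $I$ is the \emph{fixed} imaginary unit coming from the $2C_2$ action. But the $\hat O(3,\mathbb{R})$-action of Proposition~\ref{biri2} does not move you inside the biregular equivalence class of $T$: Remark~\ref{biri} says explicitly that the resulting tori are ``notwithstanding not equivalent according to our definition''. In the paragraph following that remark the paper only uses the rotation to choose a convenient \emph{ambient frame} $\{1,I,J,K\}$ adapted to an \emph{arbitrary} $v_2$; it does not let you push the second vector of a given lattice into a \emph{prescribed} slice $L_I$. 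So that step, as you use it, would change the torus and the argument collapses.

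The paper bypasses this entirely. It observes (via Theorem~\ref{generazione}, Lemma~\ref{norma a bis}, Proposition~\ref{permuta}) that $\{\pm 1,\pm I\}\subset L\cap L_I$, and then invokes the classification of rank-$2$ lattices in $\mathbb{C}\cong L_I$ admitting an order-$4$ automorphism: such a lattice with $1$ minimal is exactly $\mathbb{Z}+\mathbb{Z}I$. Hence $L\cap L_I=\mathbb{Z}+\mathbb{Z}I$, so $\{1,I\}$ is primitive in $L$ and the Minkowski--Siegel Algorithm~\ref{kowski} may take $\alpha_2=I$ at its second step. Then, having chosen $\alpha_3$ per the algorithm, the $2C_2$-symmetry forces $\alpha_3 I\in L$ with $|\alpha_3 I|=|\alpha_3|$, and the algorithm can (``automatically'') take $\alpha_4=\alpha_3 I$. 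You hint at the minimality/sublattice reasoning, but you need to drop the $\hat O(3,\mathbb{R})$ rotation and instead use the complex rank-$2$ classification to justify that $\{1,I\}$ really extends to a special basis of $L$; otherwise the step ``one can choose a special basis whose second vector is exactly $I$'' is unsupported.
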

 \begin{proof} 
If there exists $I\in \mathbb{S}$ and $\alpha_3\in \mathbb{H}$ with $|\alpha_3|\ge 1$  such that $(I, \alpha_3, \alpha_3 I )$ is  (a representative of) the modulus of $T$, then $\mathcal B =\{ 1, I, \alpha_3, \alpha_3 I \}$ 
 is a special basis of a lattice $L$ such that $T$ is equivalent to $\mathbb{H}/L$. Then the four bases
\begin{eqnarray*}
\mathcal B I^0&=&\mathcal B =\{ 1, I, \alpha_3, \alpha_3 I \}\\
\mathcal B I ^1&=&\mathcal B I=\{ I, -1, \alpha_3 I,- \alpha_3 \}\\
\mathcal B I^2 &=&-\mathcal B =\{ -1, -I, -\alpha_3 ,-\alpha_3 I \}\\
\mathcal B I^3 &=&-\mathcal B I=\{ -I, 1,  -\alpha_3 I,\alpha_3  \}
\end{eqnarray*}
all generate the lattice $L$, and hence the subgroup $2C_2\cong \{\pm 1, \pm I \}$ is a subgroup of the group of automorphisms $Aut(T)$. On the other hand, if $2C_2\subseteq Aut(T)$, then thanks to Theorem \ref{generazione}, Lemma \ref{norma a bis} and Proposition \ref{permuta},  the vectors $\{\pm 1, \pm I \}$ must belong to, and generate, the rank-$2$ sublattice $L\cap L_I$ of $L$. Therefore (by the classification of rank-$2$ lattices of $\mathbb{C}$), while using the Minkowski-Siegel Reduction Algorithm, we can choose  $\alpha_2=I$ in the special basis  $\mathcal B =\{ 1, \alpha_2, \alpha_3, \alpha_4 \}$ that generates $L$. We can also suppose that the third vector $\alpha_3$ is chosen (again according to  Algorithm \ref{kowski}) among those vectors that can complete the special basis $\mathcal{B}$. As a consequence, $\alpha_3 I, -\alpha_3, -\alpha_3 I, $ must belong to $L$ together with $\alpha_3$. Since these three elements of $L$ have the same norm of $\alpha_3$,  
and since $Aut(T)$ contains a subgroup isomorphic to $2C_2$, then the Minkowski-Siegel Reduction Algorithm \ref{kowski} can produce a special basis that generates $L$ by using suitable $\alpha_3$ and taking, automatically, $\alpha_4=\alpha_3 I$;  this concludes the proof. 
\end{proof}

Let $\{1, I, J, K\}$ be a standard basis for the skew field of quaternions. We recall that the ring of \emph{Lipschitz quaternions}  (or \emph{Lipschitz integers}) consists of the set $\mathcal L=\{m+nI+pJ+qK : m,n,p,q \in \mathbb{Z}\}\subset \mathbb{H}$. The ring $\mathcal L$ is, in turn, a subring of the ring of \emph{Hurwitz quaternions} (or \emph{Hurwitz integers}) $\mathcal{H}=\{ a+bI+cJ+dK \,\, : \,\, a,b,c,d \in \mathbb{Z} \ \  \textnormal{or}\ \  a,b,c,d \in \mathbb{Z}+\frac12\}$. The surprising properties of these rings are described, for instance, in \cite[Chapter II, Section 5]{conway}.

\begin{oss}
Concerning the proof of Proposition \ref{2C_2}, notice that only in the case in which the lattice $L$ consists of the ring of Hurwitz integers $\mathcal{H}$, it can happen that there exists $\alpha_3$ such that the special basis $\mathcal B =\{ 1, I, \alpha_3, \alpha_3 I \}$ simply generates a proper sublattice of $L$ and not the whole $L:$ for example if $\alpha_3=J$ is an imaginary unit quaternion orthogonal to $I,$ then the set $\mathcal B =\{ 1, I, J, JI \}$ generates the sublattice of the Lipschitz quaternions instead of the whole lattice of Hurwitz quaternions. According to Remark \ref{inclusions} and to the classification \eqref{subgroupofS3} of the finite subgroups of unitary quaternions which can be contained in $L \cap \mathbb{S}^3$, this is the only case in which  a set of linearly independent vectors of type $\mathcal B =\{ 1, I, \alpha_3, \alpha_3 I \}\subset L$ (with $|\alpha_3|= 1$) can generate a proper sublattice instead of the whole lattice. In this particular case, it is enough to change $\alpha_3$ with another vector of $L \cap \mathbb{S}^3$ which can be reached by means of  the Minkowski-Siegel Reduction Algorithm and such that $1,I,\alpha_3$ are not in the same multiplicative subgroup of $L \cap \mathbb{S}^3:$ we know that at least an $\alpha_3$ of this kind exists (this last fact depends on the well known structure of the subgroups of $2 \mathbb{T}$).
\end{oss}

If the group  of automorphisms $Aut(T)$  of the torus $T$ contains a subgroup isomorphic to $2C_2$, and if the torus $T$ has a special basis  of type $\{1, I, \alpha_3, \alpha_3I\}$ with  $|\alpha _3|>1$, then $Aut(T)\cong 2C_2$: this is a consequence of the classification of the rank-$2$ lattices of $\mathbb{C}$, and of the fact that, in these hypotheses, there are only four points in $L\cap \mathbb{S}^3$, all belonging to $L_I\cap \mathbb{S}^3$ (see Proposition \ref{permuta}).

\begin{definiz}
A quaternionic torus whose group of biregular automorphisms is isomorphic to $2C_2\cong 2D_2$ is called a \emph{cyclic-dihedral torus}.
\end{definiz}

\begin{prop}\label{2C_3} Let $T$ be a quaternionic torus. 
The group of biregular automorphisms of the torus $T$ contains a subgroup (isomorphic to) $2C_3$ if and only if there exists $I\in \mathbb{S}$ and a quaternion $\alpha_3$ with $|\alpha_3|\geq 1$ such that $(e^{\frac{\pi I}{3}}, \alpha_3, \alpha_3 e^{\frac{\pi I}{3}})$ is (a representative of) the modulus of $T$.
\end{prop}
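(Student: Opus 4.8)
The plan is to mirror the structure of the proof of Proposition~\ref{2C_2}, replacing the order-$4$ element $I$ by the order-$6$ element $e^{\frac{\pi I}{3}}$. The key algebraic fact is that $e_3=e^{\frac{\pi I}{3}}$ is a primitive $6$-th root of unity lying in the slice $L_I\cong\mathbb C$, and that $\langle\langle e_3\rangle\rangle=2C_3$ by Theorem~\ref{generazione}; moreover $|e_3|=1$, so right multiplication by $e_3$ is a Euclidean rotation (Remark~\ref{omotetia}) which, by Remark~\ref{pianiinvarianti}, leaves the complex plane $L_I$ invariant.

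For the ``if'' direction, suppose $(e^{\frac{\pi I}{3}},\alpha_3,\alpha_3 e^{\frac{\pi I}{3}})$ is a representative of the modulus of $T$, so that $\mathcal B=\{1, e_3, \alpha_3, \alpha_3 e_3\}$ is a special basis of a lattice $L$ with $T$ equivalent to $\mathbb H/L$. I would consider the six bases $\mathcal B e_3^{\,m}$ for $m=0,1,\dots,5$: since $e_3^{\,6}=1$ and $e_3^{\,3}=-1$, one checks that $\mathcal B e_3=\{e_3,-1,\alpha_3 e_3,-\alpha_3 e_3^{\,2}\}$ and in general each $\mathcal B e_3^{\,m}$ consists of integer combinations of the vectors $1,e_3,\alpha_3,\alpha_3 e_3$ — here one uses $e_3^{\,2}=e_3-1$ (the minimal polynomial $x^2-x+1=0$ of a primitive $6$-th root of unity), which expresses $\alpha_3 e_3^{\,2}=\alpha_3 e_3-\alpha_3\in L$. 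Hence each $\mathcal B e_3^{\,m}$ generates the same lattice $L$, so $F(q)=qe_3$ is an automorphism of $T$ and $2C_3\cong\langle\langle e_3\rangle\rangle\subseteq Aut(T)$.

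For the ``only if'' direction, assume $2C_3\subseteq Aut(T)$. By Lemma~\ref{norma a bis} and Proposition~\ref{permuta}, the generator of this $2C_3$ is a unit quaternion $a$ of order $6$, hence (up to replacing $a$ by $a^{-1}$, which generates the same group) $a=e^{\frac{\pi I}{3}}$ for some $I\in\mathbb S$; since $1\in L$ we get $a\in L$, and $1,a$ span the rank-$2$ sublattice $L\cap L_I$, which is invariant under right multiplication by $a$ (Remark~\ref{pianiinvarianti}). By the classification of rank-$2$ lattices of $\mathbb C$ — this is the equianharmonic case, the hexagonal lattice $\mathbb Z[e^{\pi i/3}]$ — while running the Minkowski--Siegel Reduction Algorithm we may take $\alpha_2=e_3$ in the special basis $\{1,\alpha_2,\alpha_3,\alpha_4\}$ generating $L$. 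Choosing $\alpha_3$ among the vectors that complete this special basis, the four vectors $\alpha_3, \alpha_3 e_3, \alpha_3 e_3^{\,2}=\alpha_3(e_3-1), -\alpha_3$ all lie in $L$ and have the same norm as $\alpha_3$; since $Aut(T)$ contains a copy of $2C_3$ permuting vectors of fixed norm, Algorithm~\ref{kowski} can be made to produce a special basis of $L$ with this $\alpha_3$ and automatically $\alpha_4=\alpha_3 e_3$, which is exactly the assertion.

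The main obstacle, as in Proposition~\ref{2C_2}, is the delicate point in the ``only if'' direction that one can \emph{simultaneously} force $\alpha_2=e_3$ \emph{and} $\alpha_4=\alpha_3 e_3$ while respecting the minimality conditions of the Minkowski--Siegel algorithm — i.e.\ that the orbit structure imposed by $2C_3$ is compatible with a reduced Gram matrix. One must also address, in analogy with the Remark following Proposition~\ref{2C_2}, the degenerate possibility that a candidate $\mathcal B=\{1,e_3,\alpha_3,\alpha_3 e_3\}$ with $|\alpha_3|=1$ generates only a proper sublattice (this can occur inside $2\mathbb T$, when $1,e_3,\alpha_3$ happen to lie in a common multiplicative subgroup of $L\cap\mathbb S^3$); in that case one replaces $\alpha_3$ by another unit vector of $L\cap\mathbb S^3$ reachable by the algorithm and not lying in that subgroup, whose existence follows from the known structure of the subgroups of $2\mathbb T$.
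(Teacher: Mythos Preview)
Your approach is essentially identical to the paper's: both directions proceed exactly as in Proposition~\ref{2C_2}, using the six bases $\mathcal B\,e_3^{\,m}$ for the ``if'' part and the classification of rank-$2$ complex lattices together with the Minkowski--Siegel algorithm for the ``only if'' part. One minor slip: your displayed computation $\mathcal B e_3=\{e_3,-1,\alpha_3 e_3,-\alpha_3 e_3^{\,2}\}$ is off (the second and fourth entries should be $e_3^{\,2}$ and $\alpha_3 e_3^{\,2}$, since $e_3^{\,3}=-1$, not $e_3^{\,2}=-1$), but your key observation $e_3^{\,2}=e_3-1$ is exactly what makes the argument go through, and the paper relies on the same identity implicitly.
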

\begin{proof} 
If $(e^{\frac{\pi I}{3}}, \alpha_3, \alpha_3 e^{\frac{\pi I}{3}})$ is  (a representative of) the modulus of $T$ then $\mathcal B =\{ 1, e^{\frac{\pi I}{3}}, \alpha_3, \alpha_3 e^{\frac{\pi I}{3}} \}$ is a special basis of a lattice $L$ such that $T$ is equivalent to $\mathbb{H}/L$. Then the six bases
\begin{eqnarray*}
\mathcal B (e^{\frac{\pi I}{3}})^0 &=&\mathcal B =\{ 1, e^{\frac{\pi I}{3}}, \alpha_3, \alpha_3 e^{\frac{\pi I}{3}} \}\\
\mathcal B (e^{\frac{\pi I}{3}})^1 &=&\mathcal B e^{\frac{\pi I}{3}}=\{ e^{\frac{\pi I}{3}}, e^{\frac{2\pi I}{3}}, \alpha_3 e^{\frac{\pi I}{3}}, \alpha_3 e^{\frac{2\pi I}{3}} \}\\
\mathcal B (e^{\frac{\pi I}{3}})^2 &=&\mathcal B e^{\frac{2\pi I}{3}} =\{ e^{\frac{2\pi I}{3}}, -1, \alpha_3e^{\frac{2\pi I}{3}}, -\alpha_3 \}\\
\mathcal B (e^{\frac{\pi I}{3}})^3 &=&-\mathcal B =\{ -1, -e^{\frac{\pi I}{3}}, -\alpha_3, -\alpha_3 e^{\frac{\pi I}{3}} \}\\
\mathcal B (e^{\frac{\pi I}{3}})^4 &=&-\mathcal B e^{\frac{\pi I}{3}}=\{ -e^{\frac{\pi I}{3}}, -e^{\frac{2\pi I}{3}}, -\alpha_3 e^{\frac{\pi I}{3}}, -\alpha_3 e^{\frac{2\pi I}{3}} \}\\
\mathcal B (e^{\frac{\pi I}{3}})^5 &=&-\mathcal B e^{\frac{2\pi I}{3}}=\{ -e^{\frac{2\pi I}{3}}, 1, -\alpha_3e^{\frac{2\pi I}{3}}, \alpha_3 \}
\end{eqnarray*}
all generate the lattice $L$, and hence the subgroup $2C_3\cong \{\pm 1, \pm  e^{\frac{\pi I}{3}}, \pm e^{\frac{2\pi I}{3}}\}$ is a subgroup of the group of automorphisms $Aut(T)$. On the other hand, if $2C_3\subseteq Aut(T)$, then thanks to Theorem \ref{generazione}, Lemma \ref{norma a bis} and Proposition \ref{permuta}, there exists $I\in \mathbb S$ such that the vectors $\{\pm 1, \pm  e^{\frac{\pi I}{3}}, \pm e^{\frac{2\pi I}{3}}\}$ must belong to, and generate, the rank-$2$ sublattice $L\cap L_I$ of $L$. Therefore (using the classification of the rank-$2$ lattices of $\mathbb{C}$),  while using the Minkowski-Siegel Reduction Algorithm to construct  the special basis  $\mathcal B =\{ 1, \alpha_2, \alpha_3, \alpha_4 \}$ that generates $L$,  we can choose  $\alpha_2=e^{\frac{\pi I}{3}}$. 
We can also suppose that the third vector $\alpha_3$ is chosen (according to  Algorithm \ref{kowski}) among those vectors that can complete the special basis $\mathcal{B}$. As a consequence, the points $\alpha_3 e^{\frac{\pi I}{3}},  \alpha_3e^{\frac{2\pi I}{3}}, -\alpha_3, -\alpha_3 e^{\frac{\pi I}{3}}, -\alpha_3e^{\frac{2\pi I}{3}} $ must belong to $L$ together with $\alpha_3$. Since these five elements have the same norm of $\alpha_3$, 
and since $Aut(T)$ contains a subgroup isomorphic to $2C_3$, then the Minkowski-Siegel Reduction Algorithm \ref{kowski} can produce a special basis that generates $L$ by using suitable $\alpha_3$ and, automatically, $\alpha_4=\alpha_3e^{\frac{\pi I}{3}}$; this completes the proof.
\end{proof}

If the group  of automorphisms $Aut(T)$  of the torus $T$ contains a subgroup isomorphic to $2C_3$, and if the torus $T$ has a 
special basis  of type $\{1, e^{\frac{\pi I}{3}}, \alpha_3, \alpha_3 e^{\frac{\pi I}{3}} \}$ with  $|\alpha _3|>1$, then $Aut(T)\cong 2C_3$: this is a consequence of the classification of the rank-$2$ lattices of $\mathbb{C}$, and of the fact that, in these hypotheses, there are only six points in $L\cap \mathbb{S}^3$, all belonging to $L_I\cap \mathbb{S}^3$ (see Proposition \ref{permuta}).

\begin{definiz}
A quaternionic torus whose group of biregular automorphisms is isomorphic to $2C_3$ is called a \emph{cyclic torus}.
\end{definiz}

 \begin{prop} \label{2D_4} Let $T$ be a quaternionic torus. 
The group of biregular automorphisms of the torus $T$ is isomorphic to the group $2D_4$, if and only if, there exist $I,J \in \mathbb{S}$ with $J\perp I$,  such that the point $(I, J,JI )$  is  (a representative of) the modulus of $T$.
\end{prop}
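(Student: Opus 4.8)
The plan is to establish the two implications separately; in each case the crux is to pin down the lattice $L$ with $T=\mathbb H/L$. By Corollary \ref{i)ii)} we may assume that $L$ is generated by a special basis, so that $1$ is a vector of minimal modulus in $L$, and by Proposition \ref{permuta} we have $Aut(T)\cong A_T$, where $A_T$ is the subgroup of $\mathbb S^3$ formed by those $a\in L\cap\mathbb S^3$ with $La=L$.

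\emph{Sufficiency.} Assume $(I,J,JI)$, with $I,J\in\mathbb S$ and $J\perp I$, represents the modulus of $T$. Since $JI=-IJ$, the special basis $\{1,I,J,JI\}$ generates $L=\mathbb Z1+\mathbb Z I+\mathbb Z J+\mathbb Z(IJ)$; by Remark \ref{biri}, $\{1,I,J,IJ\}$ is an orthonormal basis of $\mathbb H\cong\mathbb R^4$ obeying the standard quaternionic multiplication rules, so $L$ is a subring of $\mathbb H$ whose only unit vectors are $\pm1,\pm I,\pm J,\pm IJ$. Each such unit $a$ has inverse $\bar a\in L$, whence $La=L$; therefore $A_T=L\cap\mathbb S^3=\{\pm1,\pm I,\pm J,\pm IJ\}$, a copy of the quaternion group $Q_8\cong 2D_4$, and $Aut(T)\cong 2D_4$.

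\emph{Necessity.} Assume $Aut(T)\cong 2D_4$, so $A_T$ is a subgroup of $\mathbb S^3$ abstractly isomorphic to $Q_8$. Pick $I,J\in A_T$ corresponding to two generators of $Q_8$: both have order $4$, hence are imaginary units, and so is their product $IJ$; since for imaginary $I,J$ one has $Re(IJ)=-\langle I,J\rangle$, the condition $IJ\in\mathbb S$ forces $I\perp J$, and then $A_T=\{\pm1,\pm I,\pm J,\pm IJ\}$ (cf. the discussion preceding Theorem \ref{generazione}). In particular $L$ contains the rank-$4$ sublattice $\mathcal L'=\mathbb Z1+\mathbb Z I+\mathbb Z J+\mathbb Z(IJ)$, an isometric copy of $\mathbb Z^4$. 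Every nonzero class of the finite group $L/\mathcal L'$ has a representative $w\in L$ of modulus at most the covering radius of $\mathcal L'$, which equals $1$; since $1$ is minimal in $L$ and $w\neq0$, in fact $|w|=1$, and since $w-\ell\in L\setminus\{0\}$ for every $\ell\in\mathcal L'$, the point $w$ lies at distance exactly $1$ from $\mathcal L'$, i.e. it is a deep hole. As the deep holes of $\mathbb Z^4$ form a single class modulo $\mathbb Z^4$, represented by $\tfrac12(1+I+J+IJ)$, it follows that $L/\mathcal L'$ has order $1$ or $2$, i.e. $L=\mathcal L'$ or $L=\mathcal H':=\mathcal L'+\mathbb Z\cdot\tfrac12(1+I+J+IJ)$. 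The latter is impossible: $\mathcal H'$ is a ring (the analogue of the Hurwitz integers), so $A_T=\mathcal H'\cap\mathbb S^3$ would be the group of $24$ unit Hurwitz quaternions, isomorphic to $2\mathbb T$ (see \cite[Chapter II, Section 5]{conway}), contradicting $A_T\cong 2D_4$. Hence $L=\mathbb Z1+\mathbb Z I+\mathbb Z J+\mathbb Z(JI)$; moreover $\{1,I,J,JI\}$ has Gram matrix $I_4$, which satisfies B1) trivially and satisfies B2) because any integer vector $(n_1,n_2,n_3,n_4)$ whose last entries have no common divisor is nonzero and gives $(n_1,n_2,n_3,n_4)\,{}^t(n_1,n_2,n_3,n_4)=n_1^2+n_2^2+n_3^2+n_4^2\ge 1=r_{k,k}$; so $I_4\in\mathcal R$ by Proposition \ref{cnsfundaregion} and $\{1,I,J,JI\}$ is special. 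Thus $(I,J,JI)\in\mathcal M$ represents the modulus of $T$.

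I expect the real difficulty to lie entirely in the lattice-identification step of the necessity part, namely in deducing from the isomorphism type of $Aut(T)$ alone that $L$ is an isometric copy of the Lipschitz lattice. The subtlety is that $A_T$ need not equal $L\cap\mathbb S^3$, so $L$ is not determined by its unit vectors; the covering-radius/deep-hole analysis together with the exclusion of the Hurwitz lattice through its strictly larger automorphism group $2\mathbb T$ is what closes the gap, and it is precisely the sublattice phenomenon already signalled in the Remark following Proposition \ref{2C_2}.
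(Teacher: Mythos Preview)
Your proof is correct, and in the necessity direction it follows a genuinely different route from the paper's.

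For sufficiency, the paper verifies explicitly that the bases $\mathcal B J^k$ (and, via Proposition~\ref{2C_2}, $\mathcal B I^k$) all generate $L$, whereas you exploit that $L$ is a ring: since each unit $a\in L\cap\mathbb S^3$ has $\bar a\in L$, right multiplication by $a$ is bijective on $L$. Your argument is shorter and immediately yields the \emph{equality} $A_T=L\cap\mathbb S^3$, which the paper establishes separately at the end.

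For necessity, the paper proceeds inductively through Proposition~\ref{2C_2}: from $2C_2\subset 2D_4\subseteq Aut(T)$ it obtains a special basis $\{1,I,\alpha_3,\alpha_3 I\}$, then argues via the Minkowski--Siegel algorithm that one may take $\alpha_3=J$ (leaning on the Remark after Proposition~\ref{2C_2} to handle the Hurwitz obstruction). You instead identify the lattice directly: the sublattice $\mathcal L'=\mathbb Z+\mathbb Z I+\mathbb Z J+\mathbb Z(IJ)\subset L$ is an isometric $\mathbb Z^4$ with covering radius~$1$, so any nonzero coset of $L/\mathcal L'$ is represented by a deep hole; since the deep holes of $\mathbb Z^4$ form a single class, $[L:\mathcal L']\le 2$, and the index-$2$ case (Hurwitz integers) is excluded because its unit group is $2\mathbb T\ncong 2D_4$. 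This is more self-contained---it does not invoke Proposition~\ref{2C_2} or the reduction algorithm---and makes the dichotomy Lipschitz/Hurwitz (which the paper treats in a separate Remark) the heart of the argument. The price is the covering-radius/deep-hole computation, which the paper avoids. Both approaches ultimately hinge on the same phenomenon you identify in your closing paragraph: the only overlattice of $\mathcal L'$ with minimal norm~$1$ is the Hurwitz lattice, and its automorphism group is too large.
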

\begin{proof} 
Let  $\mathcal B =\{ 1,I, J,JI \}$ be the special  basis associated to $(I, J,JI )\in \mathcal M$, and let  $L$ be the generated lattice such that $T$ is equivalent to $\mathbb{H}/L$. 
Thanks to Proposition \ref{2C_2}, we know that the multiplication by $I$ on the right generates a $2C_2$ subgroup of $Aut(T)$. Using Theorem \ref{generazione}, we are left to prove that the multiplication by $J$ on the right generates a second subgroup of  type $2C_2$ of $Aut(T)$. To this aim notice that the four bases
\begin{eqnarray*}
\mathcal B J^0 &=&\mathcal B =\{ 1,I, J,JI\}=\{ 1,I, J,-K\}\\
\mathcal B J^1 &=&\mathcal B J =\{ J, K, -1, I \}\\
\mathcal B J^2 &=&-\mathcal B  =\{ -1, -I, -J, K \}\\
\mathcal B J^3&=&-\mathcal B J =\{ -J,- K, 1, -I\}\\
\end{eqnarray*}
all generate the lattice $L$. It is then easy to conclude that the subgroup $2D_4\cong \{\pm 1, \pm I, \pm J, \pm K\}$ is a subgroup of the group of automorphisms $Aut(T)$. On the other hand, if $2D_4\subseteq Aut(T)$, then $2C_2\subset 2D_4$ is a subgroup of $Aut(T)$, and hence thanks to Theorem \ref{2C_2}, in the special  basis  $\mathcal B =\{ 1, \alpha_2, \alpha_3, \alpha_4 \}$ that generates $L$, we can choose $I=\alpha_2$ and $\alpha_4=\alpha_3 I$. Theorem \ref{generazione}, Lemma \ref{norma a bis} and Proposition \ref{permuta} imply that the vectors $\{\pm 1,\pm J \}$ must belong to, and generate, the rank-$2$ sublattice $L\cap L_J$ of $L$.  Since $J$ and $-J$ have the same norm of $I$, 
then by the Minkowski-Siegel Reduction Algorithm used to  construct a special basis for $L$, we can suppose that $\alpha_3 =J$ and find the desired basis.  To prove that $2D_4\cong Aut(T)$, begin by noticing that $\mathcal{B}$ is  an orthonormal basis of $\mathbb{H}$; it is then easy to see that $L\cap \mathbb{S}^3=\{ \pm 1, \pm I, \pm J,\pm JI \}$ and that this set has exactly the same cardinality of the group $2D_4$. Proposition \ref{permuta} leads now to the conclusion.
\end{proof}

\begin{definiz}
A quaternionic torus whose group of biregular automorphisms is isomorphic to $2D_4$ is called a \emph{$8$-dihedral torus} (or a \emph{dihedral torus of order $8$}).
\end{definiz}
\begin{oss} As we already mentioned, the notations concerning finite subgroups of unit quaternions vary very much.
We observe that the group $2D_4,$ that we (following \cite[Subsection 3.5]{conway})
called dihedral group of order 8, coincides with the so called multiplicative group of unit quaternions (and not with $D_8$, sometimes called dihedral group of order 8, $D_8 =\langle \langle a, b \rangle\rangle$ with the relations $a^4=b^2=1, bab^{-1}=a^{-1},$ see \cite[Theorem 3.4, page 164]{Artin}).

\end{oss}

\noindent Notice that the lattice of a dihedral torus of order $8$ is generated by the group $2D_4$ and coincides with the ring of Lipschitz quaternions, defined after Proposition \ref{2C_2}.

\begin{prop} \label{2D_6} Let $T$ be a quaternionic torus. 
The group of biregular automorphisms of the torus $T$ is isomorphic to the group $2D_6$, if and only if, there exist $I,J \in \mathbb{S}$, with $J\perp I$,  such that the point $(e^{\frac{\pi I}{3}}, J,J e^{\frac{\pi I}{3}})$ is  (a representative of) the modulus of $T$. 
\end{prop}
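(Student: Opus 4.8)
The plan is to follow closely the proof of Proposition \ref{2D_4}, with the rotation by $\zeta:=e^{\pi I/3}$ (which has order $6$ and satisfies $\zeta^2=\zeta-1$) playing the role of the rotation by $I$, the group $2C_3$ the role of $2C_2$, and Proposition \ref{2C_3} the role of Proposition \ref{2C_2}; as usual $\{1,I,J,IJ=K\}$ will be a basis of $\mathbb{H}$ obeying the standard multiplication rules.

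For the sufficiency I would start from the special basis $\mathcal{B}=\{1,\zeta,J,J\zeta\}$ of the lattice $L$ associated to the modulus $(\zeta,J,J\zeta)$, with $T$ equivalent to $\mathbb{H}/L$. By Proposition \ref{2C_3}, right multiplication by $\zeta$ is an automorphism of $T$ and generates a copy of $2C_3$ inside $Aut(T)$. To see that right multiplication by $J$ is also an automorphism I would check that the four sets $\mathcal{B}J^0=\mathcal{B}$, $\mathcal{B}J$, $\mathcal{B}J^2=-\mathcal{B}$, $\mathcal{B}J^3=-\mathcal{B}J$ all generate $L$; this reduces to the memberships $1\cdot J=J$, $\zeta J=J-J\zeta$, $J\cdot J=-1$ and $(J\zeta)J=\zeta-1$ in $L$, which give $\mathcal{B}J\subset L$, so that $\mathcal{B}J$, being the image of the basis $\mathcal{B}$ under a rotation of $\mathbb{H}\cong\mathbb{R}^4$ (Remark \ref{omotetia}), is again a basis of $L$, i.e. $F_J(L)=L$. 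Then Theorem \ref{generazione}, together with the standard multiplication rules of $\{1,I,J,IJ\}$, shows that the subgroup $\langle\langle\zeta,J\rangle\rangle\cong 2D_6$ of $\mathbb{S}^3$ lies in $Aut(T)$. To upgrade this to an isomorphism I would argue as in Proposition \ref{2D_4}: by Proposition \ref{permuta} the assignment $a\mapsto F_a(1)=a$ embeds $Aut(T)$ into the finite set $L\cap\mathbb{S}^3$, and writing $u=a\cdot 1+b\zeta+cJ+dJ\zeta$ with $a,b,c,d\in\mathbb{Z}$ a direct computation gives $|u|^2=(a^2+ab+b^2)+(c^2+cd+d^2)$, a sum of two hexagonal forms, so $|u|=1$ forces exactly one summand to vanish and $|L\cap\mathbb{S}^3|=6+6=12=|2D_6|$; hence $Aut(T)\cong 2D_6$.

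For the necessity, assuming $Aut(T)\cong 2D_6$, I would first use $2C_3\subset 2D_6$ (Remark \ref{inclusions}) together with Proposition \ref{2C_3} to obtain a modulus $(\zeta,\alpha_3,\alpha_3\zeta)$ of $T$ with $\zeta=e^{\pi I/3}$ and $|\alpha_3|\ge 1$, and then observe that $|\alpha_3|=1$, since $|\alpha_3|>1$ would force $Aut(T)\cong 2C_3$ by the remark following Proposition \ref{2C_3}, contradicting $Aut(T)\cong 2D_6$. Let $L$ be the corresponding lattice. Since $A_T\cong 2D_6$ contains an element $J$ of order $4$, one has $J^2=-1$, i.e. $J\in\mathbb{S}$; and because the order-$6$ cyclic subgroup of $2D_6$ (the one containing $\zeta$) is characteristic, every order-$4$ element is perpendicular to the plane $L_I$, so $J\perp I$. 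Finally, since $J$ is a minimal vector of $L$ lying outside $L_I$, I would steer the Minkowski--Siegel Reduction Algorithm \ref{kowski} applied to $L$, after the choices $\alpha_1=1$, $\alpha_2=\zeta$ provided by Proposition \ref{2C_3}, so as to obtain $\alpha_3=J$, and then, automatically by the $2C_3$-symmetry exactly as in the proof of Proposition \ref{2C_3}, $\alpha_4=\alpha_3\zeta=J\zeta$; this exhibits $(\zeta,J,J\zeta)$ as a modulus of $T$.

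I expect the main obstacle to be this last step of the necessity part: justifying that the reduction algorithm can be forced to select the prescribed unit vector $J$ as the third element of the special basis (and $J\zeta$ as the fourth). As in Proposition \ref{2D_4}, this rests on the structure of the rank-$2$ sublattices $L\cap L_I$ and $L\cap L_J$, understood through the classification of rank-$2$ lattices of $\mathbb{C}$ and Proposition \ref{permuta}. I anticipate it being cleaner here than in the $2D_4$ case: since $A_4$ has no subgroup of order $6$, the group $2D_6$ is not contained in $2\mathbb{T}$, hence is maximal among the groups listed in \eqref{subgroupofS3}, so that no exceptional ``Hurwitz-type'' sublattice can occur.
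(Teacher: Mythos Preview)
Your proposal is correct and follows essentially the same line as the paper's proof: both directions hinge on Proposition~\ref{2C_3} for the $2C_3$-part, on checking that right multiplication by $J$ permutes generators of $L$, and on bounding $|Aut(T)|$ by counting $L\cap\mathbb{S}^3$ via Proposition~\ref{permuta}. Your explicit norm computation $|u|^2=(a^2+ab+b^2)+(c^2+cd+d^2)$ is a pleasant variant of the paper's observation that $L_I\perp JL_I$, and your route to $J\perp I$ (via the dicyclic relation forcing $J$ to anticommute with $I$) is a slightly more group-theoretic phrasing of what the paper obtains by invoking Theorem~\ref{generazione} directly.
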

\begin{proof} 
Let  $\mathcal B =\{ 1,e^{\frac{\pi I}{3}}, J,J e^{\frac{\pi I}{3}}\}$ be the special  basis associated to  $(e^{\frac{\pi I}{3}}, J,J e^{\frac{\pi I}{3}})\in \mathcal M$ and let $L$ be the generated lattice such that $T$ is equivalent to $\mathbb{H}/L$. Thanks to Proposition \ref{2C_3}, we know that the multiplication by $e^{\frac{\pi I}{3}}$ on the right generates a $2C_3$ subgroup of $Aut(T)$. We are then left to prove that the multiplication by $J$ on the right generates a subgroup of  type $2C_2$ of $Aut(T)$. To this aim notice that the four bases
\begin{eqnarray*}
\mathcal B J^0&=&\mathcal B =\{ 1,e^{\frac{\pi I}{3}}, J,J e^{\frac{\pi I}{3}}\}\\
\mathcal B J^1 &=&\mathcal B J =\{ J,Je^{\frac{-\pi I}{3}}, -1,- e^{\frac{-\pi I}{3}}\}\\
\mathcal B J^2 &=&-\mathcal B  =\{ -1, -e^{\frac{\pi I}{3}}, -J, -Je^{\frac{\pi I}{3}} \}\\
\mathcal B J^3&=&-\mathcal B J =\{ -J,- Je^{\frac{-\pi I}{3}}, 1, e^{\frac{-\pi I}{3}}\}\\
\end{eqnarray*}
all generate the lattice $L$. We then conclude that the dihedral subgroup of order $12$, $2D_6\cong  \{ \pm 1, \pm  e^{\frac{\pi I}{3}}, \pm e^{\frac{2\pi I}{3}}, \pm J, \pm  Je^{\frac{\pi I}{3}}, \pm J e^{\frac{2\pi I}{3}}\}$, is a subgroup of the group of automorphisms $Aut(T)$. On the other hand, if $2D_6\subseteq Aut(T)$, then $2C_3\subset 2D_6$ is a subgroup of $Aut(T)$, and hence, thanks to Proposition \ref{2C_3}, in the special basis  $\mathcal B =\{ 1, \alpha_2, \alpha_3, \alpha_4 \}$ that generates $L$, we can choose $\alpha_2=e^{\frac{\pi I}{3}}$ and $\alpha_4=\alpha_3 e^{\frac{\pi I}{3}}$. Theorem \ref{generazione}, Lemma \ref{norma a bis} and Proposition \ref{permuta}  imply that the vectors $\{\pm 1, \pm J\}$ must belong to, and generate, the rank-$2$ sublattice $L\cap L_J$ of $L$.  Since $J$ and $-J$ have the same norm of $e^{\frac{\pi I}{3}}$, 
then by the Minkowski-Siegel Reduction Algorithm used to construct a special basis for $L$,  we can suppose that $\alpha_3 =J$ and find the desired basis. To prove that $2D_6\cong Aut(T)$, begin by noticing that $L_I$ is orthogonal to $JL_I$; it is then easy to see that $L\cap \mathbb{S}^3=\{ \pm 1, \pm  e^{\frac{\pi I}{3}}, \pm e^{\frac{2\pi I}{3}}, \pm J, \pm  Je^{\frac{\pi I}{3}}, \pm J e^{\frac{2\pi I}{3}}\}$ and that this set has exactly the same cardinality of the group $2D_6$. Proposition \ref{permuta} leads now to the conclusion.   
\end{proof}

\begin{definiz}
A quaternionic torus whose group of biregular automorphisms is isomorphic to $2D_6$ is called a \emph{$12$-dihedral torus} (or a \emph{dihedral torus of order $12$}).
\end{definiz}

\begin{prop} \label{2T} Let $T$ be a quaternionic torus. 
The group of biregular automorphisms of the torus $T$ is isomorphic to the group $2\mathbb{T}$, if and only if, for some $I,J \in \mathbb{S}$ with $J\perp I$, setting $K=IJ$ and $\omega=\frac{-1+I+J+K}{2}$, the point $(-\omega,-I, I\omega)$  is  (a representative of) the modulus of $T$.
\end{prop}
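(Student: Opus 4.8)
The plan is to recognize the special basis attached to $(-\omega,-I,I\omega)$ as a $\mathbb Z$-basis of the lattice $\mathcal H$ of Hurwitz integers (relative to the frame $\{1,I,J,K\}$), and then to exploit that $\mathcal H$ is a ring whose group of units is exactly $2\mathbb T$. For the sufficiency I would proceed as follows: assuming that $(-\omega,-I,I\omega)$ represents the modulus of $T$, so that $T$ is equivalent to $\mathbb H/L$ with $L$ generated by $\mathcal B=\{1,-\omega,-I,I\omega\}$, I would first check by a direct computation in the coordinates relative to $\{1,I,J,K\}$ --- using $1+\omega=\frac{1+I+J+K}{2}$ and solving for $J$ and $K$ --- that the additive group generated by $\mathcal B$ contains the standard generators $1,I,J,\frac{1+I+J+K}{2}$ of $\mathcal H$; since each vector of $\mathcal B$ already lies in $\mathcal H$, this yields $L=\mathcal H$. (The same computation, the Gram matrix of $\mathcal B$ --- whose entries lie in $\{0,\pm\frac12,1\}$ --- and the normalization of $\mathcal H$ also confirm that $\mathcal B$ is a \emph{special} basis, so $(-\omega,-I,I\omega)\in\mathcal M$.) Next I would use that $\mathcal H$ is closed under multiplication and that every $a\in 2\mathbb T$ is a unit of $\mathcal H$ with inverse $\bar a\in\mathcal H$, whence $\mathcal H a=\mathcal H$; thus each $F_a(q)=qa$ belongs to $Aut(T)$ and, in the notation of Proposition \ref{permuta}, $2\mathbb T\subseteq A_T$. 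Finally, since $A_T\subseteq\mathbb S^3\cap L=\mathbb S^3\cap\mathcal H$ and the only unit quaternions with all-integer or all-half-integer coordinates are the $24$ Hurwitz units, one has $\mathbb S^3\cap\mathcal H=2\mathbb T$; hence $A_T=2\mathbb T$ and, by Proposition \ref{permuta}, $Aut(T)\cong 2\mathbb T$.

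For the necessity I would start from $Aut(T)\cong 2\mathbb T$. By Proposition \ref{permuta} this means that $A_T$ is a finite subgroup of $\mathbb S^3$ abstractly isomorphic to $2\mathbb T$; since all copies of $2\mathbb T$ in $\mathbb S^3$ are conjugate, and conjugation by a unit quaternion is exactly a change of positively oriented orthonormal frame with the same multiplication rules, I may choose $I,J\in\mathbb S$ with $J\perp I$, put $K=IJ$ and $\omega=\frac{-1+I+J+K}{2}$, and arrange that in this frame $A_T=\langle\langle I,\omega\rangle\rangle=2\mathbb T$. Since, as usual, $1\in L$ has minimal modulus, the $24$ elements of $A_T$ lie in $L$ and generate $\mathcal H$ as an additive group (the computation of the previous paragraph), so $L\supseteq\mathcal H$. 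To conclude $L=\mathcal H$ I would rule out proper over-lattices: such an $L$ would have covolume at most $\frac14$, whereas Minkowski's convex-body theorem forces any lattice of $\mathbb R^4$ with minimal norm at least $1$ to have covolume at least $\pi^2/32>\frac14$. Therefore $L=\mathcal H$, the basis $\{1,-\omega,-I,I\omega\}$ is special, and $(-\omega,-I,I\omega)$ represents the modulus of $T$.

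I expect the delicate points to be of a bookkeeping rather than conceptual nature: carrying out carefully the coordinate identification of the additive span of $\{1,-\omega,-I,I\omega\}$ with $\mathcal H$ (and verifying that this basis is reduced, which for the normalized lattice $\mathcal H$ amounts to checking conditions (1)--(2) of Proposition \ref{condizio1/2} together with B1 on its Gram matrix), and --- in the necessity part --- excluding proper over-lattices of $\mathcal H$, for which the Minkowski volume estimate (equivalently, the fact that $\mathcal H$ is a copy of the root lattice $D_4$, whose covering radius is $\frac{1}{\sqrt2}<1$) is the essential input. I would also keep in mind that the ``for some $I,J$'' in the statement is precisely what absorbs the $\hat O(3,\mathbb R)$-ambiguity discussed in Proposition \ref{biri2} and Remark \ref{biri}.
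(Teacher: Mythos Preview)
Your argument is correct and takes a genuinely different route from the paper's.

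For the sufficiency direction, the paper does not first identify $L$ with the Hurwitz ring $\mathcal H$; instead it checks by hand that right multiplication by the generators $-I$ and $-\omega$ of $2\mathbb T$ sends the basis $\mathcal B=\{1,-\omega,-I,I\omega\}$ to other generating bases of the same lattice (displaying the four bases $\mathcal B(-I)^m$, and appealing to the analogous computation for $\omega$), and then concludes $2\mathbb T\subseteq Aut(T)$. Your approach---showing $L=\mathcal H$ and invoking that $\mathcal H$ is a ring whose unit group is exactly $2\mathbb T$, so that $\mathcal Ha=\mathcal H$ for every $a\in 2\mathbb T$---is cleaner and more structural, and it makes the identification $L=\mathcal H$ (which the paper only mentions after the proof) the organizing principle. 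Both proofs close the equality $Aut(T)=2\mathbb T$ in the same way, by counting $|L\cap\mathbb S^3|=24$ and applying Proposition~\ref{permuta}.

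For the necessity direction the two arguments diverge more substantially. The paper stays internal: it uses the inclusion $2C_3\subset 2\mathbb T$ together with Proposition~\ref{2C_3} to force $\alpha_2=-\omega$ and $\alpha_4=\alpha_3(-\omega)$ in a special basis, and then uses Theorem~\ref{generazione}, Lemma~\ref{norma a bis} and the Minkowski--Siegel algorithm to pin down $\alpha_3=-I$. You instead use the (standard) fact that all copies of $2\mathbb T$ in $\mathbb S^3$ are conjugate to fix the frame, obtain $\mathcal H\subseteq L$ from $2\mathbb T\subseteq L$, and then invoke Minkowski's convex body theorem to exclude proper over-lattices (covolume $\le 1/4$ versus the lower bound $\pi^2/32>1/4$). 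This is an elegant shortcut, but it imports tools external to the paper; the paper's argument has the advantage of being self-contained and of running in exact parallel with the proofs of Propositions~\ref{2C_2}--\ref{2D_6}.

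One small caveat on your verification that $\mathcal B$ is special: the remark you cite at the end of Section~\ref{sei} (that for \emph{normalized} lattices conditions (1)--(2) of Proposition~\ref{condizio1/2} together with B1) suffice for reducedness) is asserted there without proof, so if you want a fully self-contained check you should run the finite verification of B2) directly (e.g.\ via the algorithm of Appendix~A) on the Gram matrix of $\mathcal B$, whose entries you correctly note lie in $\{0,\pm\tfrac12,1\}$. The paper's own proof does not carry out this verification either.
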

\begin{proof} 
Notice, first of all, that if  $M=\frac{\sqrt 3}{3}(I+J+K)\in \mathbb{S}$, then $\omega=e^{\frac{2\pi M}{3}}$. Consider the special basis  $\mathcal B =\{ 1,-\omega,-I, I\omega \}$ associated to $(-\omega,-I, I\omega)\in \mathcal M$ and let  $L$ be the generated lattice such that $T$ is equivalent to $\mathbb{H}/L$. Thanks to Theorem  \ref{generazione}, we know that  $-\omega$ and $-I$ generate a $2\mathbb{T}$ subgroup of $Aut(T)$. Set  $\tilde \omega=\frac{-1+I-J-K}{2}$, and notice that $\tilde \omega= - \omega-1+I$ belongs to $L$.  It is now necessary (and it is only a direct computation) to verify that the iterated multiplication by powers of $e^{\frac{2\pi M}{3}}$ and by powers of $I$ (on the right) maps the basis $\mathcal B $ onto bases that
generate the lattice $L$.  For example, as for the multiplication by $I$ on the right,  we get  that
\begin{eqnarray*}
\mathcal B (-I)^0 &=&\mathcal B =\{ 1,-\omega, -I, I \omega \}\\
\mathcal B (-I)^1 &=&-\mathcal B I=\{ -I, I\tilde\omega , -1,\tilde\omega\}\\
\mathcal B (-I)^2 &=&-\mathcal B  =\{ -1, \omega, I, -I\omega \}\\
\mathcal B (-I)^3&=&\mathcal B I =\{ I, -I\tilde\omega, 1, -\tilde\omega\}\\
\end{eqnarray*}
are all generating bases for $L$.
We then conclude that the subgroup $2\mathbb{T}=\langle\langle -\omega, -I\rangle\rangle$ is a subgroup of the group of automorphisms $Aut(T)$. On the other hand, if $2\mathbb{T}\subseteq Aut(T)$, then $2C_3\subset 2\mathbb{T}$ is a subgroup of $Aut(T)$, and hence, thanks to Proposition \ref{2C_3}, in the special basis  $\mathcal B =\{ 1, \alpha_2, \alpha_3, \alpha_4 \}$ that generates $L$, we can choose $\alpha_2=-\omega$ and $\alpha_4=\alpha_3 (-\omega)$. Theorem \ref{generazione}, Lemma  \ref{norma a bis} and Proposition \ref{permuta} imply that the vectors $\{\pm1,\pm  I \}$ must belong to, and generate, the rank-$2$ sublattice $L\cap L_I$ of $L$.  Since $I$ and $-I$ have the same norm of $-\omega$, 
then by the Minkowski-Siegel Reduction Algorithm used to construct a special basis for $L$, we can suppose that $\alpha_3 =-I$ and find the desired basis.  To prove that $2\mathbb{T}\cong Aut(T)$, begin by noticing that the $16$ elements $\{\frac{\pm 1\pm I\pm J \pm K}{2}\}$ belong to $L\cap\,\mathbb{S}^3$ as well as the $8$ elements $\{\pm 1, \pm  I, \pm J, \pm K\}$. Therefore the set $L\cap \mathbb{S}^3$ has to have the same cardinality of the group $2\mathbb{T}$. As in the previous cases, at this point Proposition \ref{permuta} leads to the conclusion.   
\end{proof}

\begin{definiz}
A quaternionic torus whose group of biregular automorphisms is isomorphic to $2\mathbb{T}$ is called a \emph{tetrahedral torus}.
\end{definiz}
\noindent Notice that the lattice of the tetrahedral torus is the ring  $\mathcal{H}$ of the Hurwitz quaternions, defined after Proposition \ref{2C_2}. This lattice is generated by the group $2\mathbb{T}$.
\vskip .5cm
The next result can be obtained directly as a consequence of the investigation performed up to now, on the possible groups of biregular automorphisms of ``boundary" tori. Recall that, for a quaternionic torus $T$, in Proposition \ref{permuta} we have introduced the group $A_T=\{ a \in \mathbb{S}^3 \,: \exists \ F\in Aut(T) \  \textnormal{defined as}\   F(q)=qa \}$.

\begin{prop} \label{notabene2}
Let $L$ be a rank-$4$ lattice of $\mathbb{H}$ containing $1.$ Let $T=\mathbb{H} /L$ be the associated quaternionic torus.
When the group $A_T\cong Aut(T)$ 
is not reduced to $\{\pm 1\}$, then it coincides with the biggest subgroup of $L\cap \mathbb{S}^3$.
\end{prop}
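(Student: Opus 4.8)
The plan is to show that when $A_T \neq \{\pm 1\}$, the set $A_T$ is not merely \emph{a} finite subgroup of $L \cap \mathbb{S}^3$ but actually \emph{the largest} one. First I would recall that by Proposition~\ref{permuta}, $A_T$ is a finite subgroup of the group $\mathbb{S}^3$ of unit quaternions, and that it is contained in $L \cap \mathbb{S}^3$; moreover by the analysis following Lemma~\ref{norma a bis}, $A_T$ must be one of the groups in the list \eqref{subgroupofS3}, namely $2C_1, 2C_2, 2C_3, 2D_4, 2D_6$ or $2\mathbb{T}$. So what has to be proved is: if $A_T \supsetneq \{\pm 1\}$, then there is no finite subgroup of $L \cap \mathbb{S}^3$ strictly larger than $A_T$.

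The key observation is that the set $L \cap \mathbb{S}^3$ is finite (it is $\overline{B(0,1)} \cap L$ minus vectors of norm $<1$, finite by the Corollary after Lemma~\ref{LemmaCauchy}), and it is \emph{closed under the quaternionic multiplication coming from any $a \in A_T$}, since $F(q) = qa$ maps $L \cap \mathbb{S}^3$ onto itself. More is true: if $G$ is \emph{any} finite multiplicative subgroup of $\mathbb{S}^3$ with $G \subseteq L \cap \mathbb{S}^3$, then every $a \in G$ satisfies $F_a(q) = qa \in Aut(T)$: indeed $F_a$ is a regular automorphism of $\mathbb{H}$, and $F_a(L) \subseteq L$ because $L$ is generated (over $\mathbb{Z}$) by $1$ together with three further vectors, and — here is the point — one checks that right multiplication by $a$ sends each generator back into $L$. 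The cleanest way to see $F_a(L) = L$ is: $F_a$ has finite order $n$ (since $a^n = 1$ for some $n$), so $F_a^{-1} = F_a^{n-1} = F_{a^{n-1}}$ with $a^{n-1} \in G \subseteq L$, hence $F_a^{-1}(L) \subseteq L$ as well, giving equality. Therefore $G \subseteq A_T$. In other words, \emph{every} finite subgroup of $L \cap \mathbb{S}^3$ is automatically contained in $A_T$, which is exactly the statement that $A_T$ is the biggest such subgroup.

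So the proof proceeds as follows. First I would record that $A_T$ is itself a finite subgroup of $L \cap \mathbb{S}^3$ (Proposition~\ref{permuta}). Next, let $G$ be any finite subgroup of the multiplicative group $\mathbb{S}^3$ with $G \subseteq L \cap \mathbb{S}^3$; I would show $F_a \in Aut(T)$ for each $a \in G$ by verifying $F_a(L) = L$ via the finite-order argument above (using that $\{1\} \cup \{a^m : m \in \mathbb{N}\} \subseteq L$ and that $L$ is a group under addition, so any $\mathbb{Z}$-combination of $\mathbb{Z}$-translates of $a$-multiples of lattice vectors stays in $L$, while finiteness of the order of $a$ forces the reverse inclusion). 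This yields $a \in A_T$, hence $G \subseteq A_T$. Since $A_T$ is one such subgroup and contains every other, $A_T$ coincides with the biggest subgroup of $L \cap \mathbb{S}^3$, as claimed.

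\textbf{Main obstacle.} The delicate point is verifying $F_a(L) \subseteq L$ for $a$ in an arbitrary finite subgroup $G \subseteq L \cap \mathbb{S}^3$ — that is, that right multiplication by a lattice vector of norm $1$ actually preserves the lattice, not just its $\mathbb{R}$-span. Note this is \emph{false} for a general lattice vector of norm $1$ (the Remark after Proposition~\ref{2C_2} exhibits exactly such a failure, where $\{1, I, \alpha_3, \alpha_3 I\}$ generates only a proper sublattice). The resolution is that here $a$ lies in a \emph{group} $G$ of unit lattice vectors, so $a^{-1} = a^{n-1} \in L$ as well; this symmetry between $F_a$ and $F_a^{-1}$ is what upgrades $F_a(L) \subseteq L$ — which would still need the generator-by-generator check — to the clean statement. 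I would therefore emphasise that the hypothesis ``$G$ is a subgroup'' (closure under inverses), rather than just ``$G \subseteq L \cap \mathbb{S}^3$'', is doing the essential work, and that the preceding case-by-case propositions (\ref{2C_2}, \ref{2C_3}, \ref{2D_4}, \ref{2D_6}, \ref{2T}) already exhibit, for each possible $A_T \neq \{\pm1\}$, a concrete special basis making $F_a(L) = L$ transparent — so the statement can alternatively be read off directly from that classification.
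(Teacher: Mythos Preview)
There is a genuine gap in your main argument. Your central claim is that \emph{every} finite subgroup $G\subseteq L\cap\mathbb{S}^3$ satisfies $G\subseteq A_T$, and hence $A_T$ is automatically maximal. But this claim is false in general, and in fact the Example immediately following the proposition in the paper is a counterexample: for $L$ generated by $\{1,\,I,\,3J+\tfrac{1}{10},\,4K+\tfrac{1}{100}\}$ one has $L\cap\mathbb{S}^3=\{\pm1,\pm I\}\cong 2C_2$, which is certainly a group, yet $A_T=\{\pm1\}$. Concretely, $(3J+\tfrac{1}{10})\cdot I=-3K+\tfrac{1}{10}I\notin L$, so $F_I(L)\not\subseteq L$. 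Thus the step ``one checks that right multiplication by $a$ sends each generator back into $L$'' simply fails: knowing $a\in L$ (and even $a^{-1}\in L$) gives no control over $v_2a,\,v_3a,\,v_4a$, since $L$ need not be closed under right multiplication. Your ``resolution'' via finite order only upgrades an inclusion $F_a(L)\subseteq L$ to an equality \emph{once the inclusion is known}; it does not establish the inclusion. Notice too that your argument never uses the hypothesis $A_T\neq\{\pm1\}$ in any essential way---a sure sign something is wrong, since the conclusion fails without it.

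The paper's proof proceeds quite differently, by a case-by-case analysis. Assuming $A_T$ is one of $2C_2$, $2C_3$, $2D_4$, $2D_6$, $2\mathbb{T}$, it supposes for contradiction that $A_T$ is strictly contained in a larger finite subgroup $G\subseteq L\cap\mathbb{S}^3$; by the classification \eqref{subgroupofS3} and Remark~\ref{inclusions}, $G$ must then be one of the groups higher up in the inclusion chain, and Propositions~\ref{2D_4}, \ref{2D_6}, \ref{2T} pin down the lattice (hence $Aut(T)$) in each such case, contradicting the assumed value of $A_T$. Your closing sentence---that the statement ``can alternatively be read off directly from that classification''---is in fact the correct route, and is essentially what the paper does; the direct group-theoretic shortcut you propose as the primary argument does not work.
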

\begin{proof}
Suppose $Aut(T)\neq \{\pm 1\}= 2C_1$. Thanks to the classification \eqref{subgroupofS3}, we get that $Aut(T)$ has to coincide with $2C_2, 2C_3, 2D_4, 2D_6$ or $ 2\mathbb{T}$. If $Aut(T)\cong 2C_2$ and $2C_2$ is, by contradiction, strictly contained in a larger subgroup of $\mathbb{S}^3$, then using  Remark \ref{inclusions} we obtain $Aut(T)\cong 2D_4$ or $Aut(T)\cong 2\mathbb{T}$. In both cases $2D_4$ and $2\mathbb{T}$ generate a lattice associated to a different torus (see Propositions \ref{2D_4} and \ref{2T}). If $Aut(T)\cong 2D_4$ and $2D_4$ is, by contradiction, strictly contained in a larger subgroup of $\mathbb{S}^3$, then using Remark \ref{inclusions} we obtain $Aut(T)\cong 2\mathbb{T}$. In this last case $2\mathbb{T}$ generates a lattice associated to a different torus (see Proposition \ref{2T}). In the remaining case in which $Aut(T)\cong 2C_3$ the proof is totally analogous.

\end{proof}
\begin{example}
We give an example which shows, in connection to Proposition \ref{notabene2}, that when $Aut(T)\cong 2C_1= \{\pm 1\}$, then it can be strictly contained in a larger subgroup of $\mathbb{S}^3$.
Let $\{ 1, I, J, K \}$ be the standard basis for the skew field of quaternions.
Consider the lattice $L$ generated by the special basis $\{ 1, I, 3J+\frac{1}{10}, 4K+\frac{1}{100} \}.$ If $T=\mathbb{H}/L,$ then the unitary vectors of $L$ are $\{ \pm 1 , \pm I \}$ and they form a group isomorphic to $2C_2$. 
\end{example}

We conclude this section by stating a summarizing result.

\begin{teo}
The cyclic, cyclic dihedral, $8$-dihedral, $12$-dihedral, tetrahedral tori defined in this section are the unique (up to biregular diffeomorphisms) tori with $Aut(T)\neq \{\pm1\}$.
\end{teo}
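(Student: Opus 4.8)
The plan is to show that every quaternionic torus $T$ with $\mathrm{Aut}(T) \neq \{\pm 1\}$ is biregularly diffeomorphic to exactly one of the five tori (cyclic, cyclic-dihedral, $8$-dihedral, $12$-dihedral, tetrahedral) introduced above, and conversely that each of these five has a nontrivial automorphism group. The converse direction is already contained in the constructions carried out in Propositions \ref{2C_2}, \ref{2C_3}, \ref{2D_4}, \ref{2D_6}, and \ref{2T}: each of those propositions exhibits a special basis on which right-multiplication by suitable unit quaternions (generating $2C_2$, $2C_3$, $2D_4$, $2D_6$, $2\mathbb{T}$ respectively) preserves the generated lattice, so $\mathrm{Aut}(T)$ contains, hence equals, the stated group. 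So the substance is the forward direction, together with the assertion that the five classes are genuinely distinct and that no torus appears twice.

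First I would invoke the classification obtained earlier in the section: by the theorem listing the finite subgroups of unit quaternions, by Lemma \ref{norma a bis} (which forces any automorphism of a torus to have order dividing $4$ or $6$), and by the successive elimination carried out above, the only groups that can occur as $\mathrm{Aut}(T)$ are those in the list \eqref{subgroupofS3}, namely $2\mathbb{T}, 2D_4, 2D_6, 2C_1, 2C_2, 2C_3$. Now suppose $\mathrm{Aut}(T) \neq \{\pm 1\} = 2C_1$. Then $\mathrm{Aut}(T)$ is one of $2C_2, 2C_3, 2D_4, 2D_6, 2\mathbb{T}$. In each of these five cases the relevant ``only if'' half of the corresponding Proposition (\ref{2C_2}, \ref{2C_3}, \ref{2D_4}, \ref{2D_6}, \ref{2T}) tells us precisely which element of $\mathcal M$ is a representative of the modulus of $T$: e.g. if $\mathrm{Aut}(T) \cong 2C_2$ then $T$ has modulus $(I, \alpha_3, \alpha_3 I)$ with $|\alpha_3| \geq 1$ and we have seen that $|\alpha_3| > 1$ forces $\mathrm{Aut}(T) \cong 2C_2$ exactly; the case $|\alpha_3| = 1$ with $\alpha_3$ not in the same slice drops us into the $2D_4$ or $2\mathbb{T}$ situations already classified. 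Thus $T$ is equivalent to one of the five named tori. So the statement is really a bookkeeping corollary of the five structural propositions plus the subgroup classification.

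It remains to check that the five classes are pairwise distinct and that there is no redundancy within each. The cyclic-dihedral ($2C_2$), cyclic ($2C_3$), $8$-dihedral ($2D_4$), $12$-dihedral ($2D_6$) and tetrahedral ($2\mathbb{T}$) tori have automorphism groups of orders $4, 6, 8, 12, 24$ respectively — distinct integers except that one must separately verify $2C_2 \not\cong 2C_3$, which is immediate from the orders — so no torus lies in two of these classes, since $\mathrm{Aut}(T)$ is an invariant of the biregular diffeomorphism class (a biregular diffeomorphism conjugates automorphism groups). For the ``uniqueness up to biregular diffeomorphism'' claim one also notes that the $8$-dihedral and tetrahedral tori are literally unique: their lattices, being generated by the finite groups $2D_4$ and $2\mathbb{T}$, are forced to be (rescalings of) the Lipschitz and Hurwitz rings, as recorded after Propositions \ref{2D_4} and \ref{2T}. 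For the $2C_2$, $2C_3$ and $2D_6$ cases a one-parameter family of moduli appears (parameterized by $\alpha_3$), so ``the cyclic torus'' etc. should be read as ``the class of cyclic tori,'' and the honest statement is that these families exhaust all tori with the given automorphism group; I would make this reading explicit.

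\begin{proof}
Every quaternionic torus has a representative of its modulus in $\mathcal M$ (see Corollary \ref{i)ii)}). By Lemma \ref{norma a bis} and the classification \eqref{subgroupofS3}, if $\mathrm{Aut}(T) \neq \{\pm 1\}$ then $\mathrm{Aut}(T)$ is isomorphic to one of $2C_2, 2C_3, 2D_4, 2D_6, 2\mathbb{T}$. If $\mathrm{Aut}(T) \cong 2C_2 \cong 2D_2$, then by Proposition \ref{2C_2} the modulus of $T$ is of the form $(I, \alpha_3, \alpha_3 I)$ with $I \in \mathbb{S}$ and $|\alpha_3| \geq 1$; moreover, as remarked after the proof of that proposition, if $|\alpha_3| > 1$ the automorphism group is exactly $2C_2$, while the case $|\alpha_3| = 1$ yields (after the Minkowski–Siegel reduction) a torus already covered by Propositions \ref{2D_4} or \ref{2T}. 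Thus $T$ is a cyclic-dihedral torus. The same reasoning applied to Propositions \ref{2C_3}, \ref{2D_4}, \ref{2D_6}, \ref{2T} shows respectively that $T$ is a cyclic, an $8$-dihedral, a $12$-dihedral, or a tetrahedral torus. Hence these are the only tori with nontrivial automorphism group.

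Conversely, each of Propositions \ref{2C_2}, \ref{2C_3}, \ref{2D_4}, \ref{2D_6}, \ref{2T} exhibits, for the corresponding modulus, a special basis of the generating lattice $L$ on which right-multiplication by an appropriate set of unit quaternions (generating $2C_2$, $2C_3$, $2D_4$, $2D_6$, $2\mathbb{T}$, in view of Theorem \ref{generazione}) preserves $L$; therefore $\mathrm{Aut}(T)$ is nontrivial in each case. Finally, since a biregular diffeomorphism of tori conjugates the respective automorphism groups, $\mathrm{Aut}(T)$ is an invariant of the equivalence class; as the five groups $2C_2, 2C_3, 2D_4, 2D_6, 2\mathbb{T}$ are pairwise non-isomorphic (they have orders $4, 6, 8, 12, 24$), the five families of tori are pairwise disjoint. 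In the $2D_4$ and $2\mathbb{T}$ cases the lattice, being generated by the finite group $2D_4$ (resp. $2\mathbb{T}$), coincides up to a positive homothety with the ring of Lipschitz (resp. Hurwitz) quaternions, so the $8$-dihedral and tetrahedral tori are unique up to biregular diffeomorphism.
\end{proof}
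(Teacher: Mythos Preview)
Your proposal is correct and follows exactly the paper's route: the paper's own proof is the single line ``Follows directly from Propositions \ref{2C_2}, \ref{2C_3}, \ref{2D_4}, \ref{2D_6}, \ref{2T},'' and your argument is precisely this, expanded. Two small remarks on the extra material you add. First, once you have assumed $\mathrm{Aut}(T)\cong 2C_2$, the torus is cyclic-dihedral \emph{by definition}; the digression on $|\alpha_3|=1$ versus $|\alpha_3|>1$ is unnecessary and slightly muddled, since under the hypothesis $\mathrm{Aut}(T)\cong 2C_2$ the case $|\alpha_3|=1$ cannot place you in the $2D_4$ or $2\mathbb{T}$ situation. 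Second, your claim that the $2D_6$ case carries a one-parameter family is not right: Proposition \ref{2D_6} pins the modulus to $(e^{\pi I/3}, J, Je^{\pi I/3})$ with $J\perp I$, which up to the $\hat O(3,\mathbb{R})$-symmetry of $\mathcal M$ (Proposition \ref{biri2}) is rigid, just like the $2D_4$ and $2\mathbb{T}$ cases.
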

\begin{proof}
Follows directly from Propositions \ref{2C_2}, \ref{2C_3}, \ref{2D_4}, \ref{2D_6}, \ref{2T}.
\end{proof}

To close the Section, we recall that the lattices generating tori $T$ with $Aut(T)\cong 2C_2$ or $Aut(T)\cong 2C_3$ are classically called \emph{regular tessellations of $\mathbb{R}^2$}; the lattices generating tori  $T$ with $Aut(T)\cong 2D_4$,  $Aut(T)\cong 2D_6$ or $Aut(T)\cong 2\mathbb{T}$ are called \emph{regular tessellations of $\mathbb{R}^4.$}

\section{Appendix A: an algorithm to check if a basis is reduced} \label{AlgRid}

Let $R=(r_{i,j})_{i,j=1, \cdots, 4}$ be the Gram matrix associated to a given basis $\mathcal{B}=\{v_1,v_2,v_3,v_4 \}$ of the rank-$4$ lattice $L.$ Reordering the four vectors $\{v_1,v_2,v_3,v_4 \}$, without loss of generality, we can always suppose that $r_{1,1}\leq r_{2,2}\leq r_{3,3}\leq r_{4,4}.$ To check that $\mathcal{B}$ is a reduced basis, we will check the  fact that  $R$ is a reduced Gram matrix.

The first step of our algorithm is the easiest:
\begin{itemize}
\item[Step 0:]  We check if  $r_{k,k+1}=\langle v_k, v_{k+1} \rangle$ (for $k=1,2,3$) are all non negative quantities. If this is true, we proceed in the algorithm; otherwise we conclude that the Gram matrix $R$, and the basis $\mathcal{B}$,  are not reduced, and stop.
Since $R$ is a $(4 \times 4)$ symmetric, real and positive definite matrix, there exists a positive definite diagonal matrix 
$$
D=\begin{bmatrix}
\lambda_1^2& 0&0&0\\
0&\lambda_2^2&0&0\\
0&0&\lambda_3^2&0\\
0&0&0&\lambda_4^2
\end{bmatrix}
$$ 
and an orthogonal matrix $Q$ such that $^t Q  R  Q =D.$ Moreover, we can suppose that $0< \lambda_1\leq \lambda_2\leq  \lambda_3\leq  \lambda_4.$
In order to verify if $R$ is a reduced Gram matrix we proceed as follows:
\item[Step 1:] Since $R=Q  D \  ^t Q,$ the quadratic form 
$(n_1,n_2,n_3,n_4) R  \ ^t (n_1,n_2,n_3,n_4)$ can also be written as
$$(n_1,n_2,n_3,n_4) Q  D \  ^t Q  \ ^t (n_1,n_2,n_3,n_4)=\lambda_1^2 x_1^2 + \cdots + \lambda_4^2 x_4^2$$
where $\lambda_1^2, \cdots , \lambda_4^2$ are the ordered positive eigenvalues of $D$ and 
$(x_1 \cdots, x_4)=(n_1, \cdots, n_4)  Q.$ 
The geometric locus of vectors $(x_1, \cdots ,x_4)\in \mathbb{R}^4$ for which the diagonalized quadratic form is equal to $r_{1,1}=\langle v_1, v_1 \rangle$ is an ellipsoid having the length  of the maximal  axis of symmetry equal to $\frac{2|v_1|}{\lambda_1}.$
Therefore, the quadruplets $(n_1,n_2,n_3,n_4)\in \mathbb{Z}^4$ such that 
\begin{equation}\label{Rcondizione 1}
(n_1,n_2,n_3,n_4) R  \ ^t(n_1,n_2,n_3,n_4)< r_{1,1}
\end{equation}
belong necessarily to the finite set $E_1=\mathbb{Z}^4 \cap I_1^4 $ where 
$$I_1=\left[ -\frac{|v_1|}{\lambda_1}, \frac{|v_1|}{\lambda_1}\right].$$
At this point we recall the first step of the construction of the Minkowski-Siegel Reduction Algorithm: we check if there exists a point $(n_1,n_2,n_3,n_4)$ in the finite set $E_1\setminus \{\underline 0\}$ such that inequality \eqref{Rcondizione 1} is fulfilled. If the answer is yes, then we conclude that the Gram matrix $R$, and hence the basis $\mathcal{B}$, are not reduced, and stop. Otherwise we proceed in the algorithm.

\item[Step 2:] In this step we consider the quadratic equation 
$$
(n_1,n_2,n_3,n_4) R  \ ^t(n_1,n_2,n_3,n_4)
=r_{2,2}.
$$
Let 
$$I_2=\left[ -\frac{|v_2|}{\lambda_1}, \frac{|v_2|}{\lambda_1}\right]$$ 
and set $E_2$ to be the finite set $\mathbb{Z}^4 \cap I_2^4$. By Definition \ref{reducedGram} and by condition B2)$'$, a reduced Gram matrix is such that: if there exists $(n_1,n_2,n_3,n_4)\in E_2\setminus \{\underline 0\}$ with
\begin{equation}\label{condizione 2}
(n_1,n_2,n_3,n_4) R  \ ^t(n_1,n_2,n_3,n_4) < r_{2,2}
\end{equation}
then $n_2,n_3,n_4$ have common divisors. Therefore we check if B2)$'$ holds true. If the answer is no, then we conclude that the Gram matrix $R$, and hence the basis $\mathcal{B}$, are not reduced, and stop. Otherwise we proceed in the algorithm. 
\item[Step 3:] Similar  procedure applies to the quadratic equation
$$(n_1,n_2,n_3,n_4) R  \ ^t(n_1,n_2,n_3,n_4)=r_{3,3}.$$
Let 
$$I_3=\left[ -\frac{|v_3|}{\lambda_1}, \frac{|v_3|}{\lambda_1}\right]$$ 
and set $E_3$ to be the finite set $\mathbb{Z}^4 \cap I_3^4$. By Definition \ref{reducedGram} and by condition B2)$'$, a reduced Gram matrix is such that: if there exists $(n_1,n_2,n_3,n_4)\in E_3\setminus \{\underline 0\}$ with
\begin{equation}\label{condizione 3}
(n_1,n_2,n_3,n_4) R  \ ^t(n_1,n_2,n_3,n_4) < r_{3,3}
\end{equation}
then $n_3,n_4$ have common divisors. Therefore we check if B2)$'$ holds true.  If the answer is no, then we conclude that the Gram matrix $R$, 
and hence the basis $\mathcal{B}$, are not reduced, and stop. Otherwise we proceed in the algorithm. 
\item[Step 4:] In the last step our  procedure is applied to the quadratic equation
$$(n_1,n_2,n_3,n_4) R  \ ^t(n_1,n_2,n_3,n_4)=r_{4,4}.$$
Let 
$$I_4=\left[ -\frac{|v_4|}{\lambda_1}, \frac{|v_4|}{\lambda_1}\right]$$ 
and set $E_4$ to be the finite set $\mathbb{Z}^4 \cap I_4^4$. By Definition \ref{reducedGram} and by condition B2)$'$, a reduced Gram matrix is such that: if there exists $(n_1,n_2,n_3,n_4)\in E_4\setminus \{\underline 0\}$ with
\begin{equation}\label{condizione 4}
(n_1,n_2,n_3,n_4) R  \ ^t(n_1,n_2,n_3,n_4) < r_{4,4}
\end{equation}
then $n_4$ must be different from $\pm 1$. Therefore we check if B2)$'$ holds true.  If the answer is no, then we conclude that the Gram matrix $R$, and hence the basis $\mathcal{B}$, are not reduced. Otherwise we finally conclude that the Gram matrix $R$, and hence the basis $\mathcal{B}$, are reduced.
\end{itemize}

\vskip .5cm 

A few significant examples, useful  to illustrate the meaning of the results obtained, are the following. \\
Let $\{ 1, I, J, K \}$ be the standard basis for the skew field of quaternions. 

 \begin{example}
An example of a cyclic-dihedral torus is the one associated to  $(I, e^{\frac{2}{5}\pi J}, e^{\frac{2}{5}\pi J}I)\in \mathcal M$.  Indeed, running the Algorithm presented in this section, we find out that Step 0 is satisfied and that the only vector of the lattice $L$  generated by the special basis $\mathcal B=\{1, I, e^{\frac{2}{5}\pi J}, e^{\frac{2}{5}\pi J}I \}$ inside the unit ball is the null vector; besides, on $\mathbb{S}^3\cap L$ we only find the set of vectors $\mathcal B\cup -\mathcal B$.
\end{example}

\begin{example}
A second example of a cyclic-dihedral torus is the one associated to $(I, 4J+3K, -4K+3J)\in \mathcal M$.  Indeed, running the Algorithm presented in this section, we find out that Step 0 is satisfied and all conditions in B2) are verified. The only vector of the lattice  $L$ generated by  the special basis $\mathcal B=\{1, I, 4J+3K, -4K+3J \}$ inside the unit ball is the null vector; besides, on $\mathbb{S}^3\cap L$ we only find the set of vectors $\{\pm 1, \pm I\}.$  On the sphere of radius $5$ there are 16 elements of $L$ and the product by $a\in \{\pm 1, \pm I\}$ permutes them, as explained in Proposition \ref{permuta}. 
\end{example}


\begin{example}
To present an example of a cyclic torus we use the one associated to $(e^{\frac{\pi}{3}I}, e^{\frac{2}{5}\pi J}, e^{\frac{2}{5}\pi J} e^{\frac{\pi}{3}I})\in \mathcal M$, and hence to the lattice $L$ generated by the special basis $\mathcal B=\{1,e^{\frac{\pi}{3}I}, e^{\frac{2}{5}\pi J}, e^{\frac{2}{5}\pi J} e^{\frac{\pi}{3}I} \}$.  Indeed, running the Algorithm presented in this section, we find out that Step 0 is satisfied and the only vector of the lattice $L$  inside the unit ball is the null vector; besides, on $L \cap \mathbb{S}^3$ we have $12$ vectors: $8$ of them are in $\mathcal B\cup -\mathcal B$ and 4 other vectors correspond to $\{\pm e^{\frac{2}{3}\pi I}, \pm e^{\frac{2}{5}\pi J}e^{\frac{2}{3}\pi I}\}$.
\end{example}

\section{Appendix B: an algorithm to check if a basis is tame}

We want now to provide an algorithm to establish when a reduced basis is a tame basis.

Let $R=(r_{i,j})_{i,j=1, \cdots, 4}$ be the reduced Gram matrix associated to a reduced basis $\mathcal{B}=\{v_1,v_2,v_3,v_4 \}$ of a rank-$4$ lattice $L.$ We will use precisely the same notations as in the algorithm of the previous section. The first step of our new algorithm is the following:
\begin{itemize}
\item[Step 0:]  We check if  $r_{k,k+1}=\langle v_k, v_{k+1} \rangle$ (for $k=1,2,3$) are all strictly positive quantities. If this is true, we proceed in the algorithm; otherwise we conclude that the basis $\mathcal{B}$ is not tame, and stop.
\item[Step 1:] Since $R=Q  D  ^t Q,$ the quadratic form 
$(n_1,n_2,n_3,n_4) R  \ ^t (n_1,n_2,n_3,n_4)$ can also be written as
$$(n_1,n_2,n_3,n_4) Q  D  \ ^t Q  \ ^t (n_1,n_2,n_3,n_4)=\lambda_1^2 x_1^2 + \cdots + \lambda_4^2 x_4^2$$
where $\lambda_1^2, \cdots , \lambda_4^2$ are the ordered positive eigenvalues of $D$ and 
$(x_1 \cdots, x_4)=(n_1, \cdots, n_4)  Q.$ 
The geometric locus of vectors $(x_1, \cdots ,x_4)\in \mathbb{R}^4$ for which the diagonalized quadratic form is equal to $r_{1,1}=\langle v_1, v_1 \rangle$ is an ellipsoid having the length  of the maximal axis of symmetry equal to $\frac{2|v_1|}{\lambda_1}.$
Therefore, the quadruplets $(n_1,n_2,n_3,n_4)\in \mathbb{Z}^4$ such that 
\begin{equation}\label{condizione 1}
(n_1,n_2,n_3,n_4) R  \ ^t(n_1,n_2,n_3,n_4) = r_{1,1}
\end{equation}
belong necessarily to the finite set $E_1=\mathbb{Z}^4\cap  I_1^4$, where 
$$I_1=\left[ -\frac{|v_1|}{\lambda_1}, \frac{|v_1|}{\lambda_1}\right].$$
At this point we recall the first step of the construction of the Minkowski-Siegel Reduction Algorithm: we check if there exists a point $(n_1,n_2,n_3,n_4)$ in the finite set $E_1\setminus \{(\pm 1,0,0,0)\}$ such that equality \eqref{condizione 1} is fulfilled. If the answer is yes, then the integers $(n_1,n_2,n_3,n_4)$ have common divisors: indeed, if no non-trivial common divisor exists, we can find a vector in the lattice $L$ whose squared norm is equal to $r_{1,1}$ and which can substitute $v_1$ in $\mathcal{B}.$ If this is the case the basis $\mathcal{B}$ is not unique and it is not tame. Otherwise we proceed in the algorithm.

\item[Step 2:] In this step we consider the quadratic equation 
$$
(n_1,n_2,n_3,n_4) R  \ ^t(n_1,n_2,n_3,n_4)
=r_{2,2}.
$$
Let 
$$I_2=\left[ -\frac{|v_2|}{\lambda_1}, \frac{|v_2|}{\lambda_1}\right]$$ 
and set $E_2$ to be the finite set $\mathbb{Z}^4 \cap I_2^4$. 
Suppose there exists  no quadruplet $(n_1,n_2,n_3,n_4)\in E_2\setminus \{(0,\pm 1,0,0)\}$ with
\begin{equation}\label{condizione 21}
(n_1,n_2,n_3,n_4) R  \ ^t(n_1,n_2,n_3,n_4) = r_{2,2}.
\end{equation}
Then we go to the next step of the algorithm. Suppose that instead we find a (finite) set $B$ of  quadruplets $(n_1,n_2,n_3,n_4)\in E_2\setminus \{(0,\pm 1,0,0)\}$ with
\begin{equation}\label{condizione 22}
(n_1,n_2,n_3,n_4) R  \ ^t(n_1,n_2,n_3,n_4) = r_{2,2}.
\end{equation}
We use now Definition \ref{tameness}, Proposition \ref{baseunica} and condition B2)$'$:  if,  for all elements $(n_1,n_2,n_3,n_4)\in B$, $(n_2,n_3,n_4)$ have  common divisors,  we go to the next step of the algorithm. Otherwise we conclude that the basis $\mathcal{B}$ is not tame, and stop. 
\item[Step 3:] Similar procedure applies to the quadratic equation
$$(n_1,n_2,n_3,n_4) R  \ ^t(n_1,n_2,n_3,n_4)=r_{3,3}.$$
Let 
$$I_3=\left[ -\frac{|v_3|}{\lambda_1}, \frac{|v_3|}{\lambda_1}\right]$$ 
and set $E_3$ to be the finite set $\mathbb{Z}^4 \cap I_3^4$. 
Suppose there exists  no quadruplet $(n_1,n_2,n_3,n_4)\in E_3\setminus \{(0, 0, \pm 1,0)\}$ with
\begin{equation}\label{condizione 23}
(n_1,n_2,n_3,n_4) R  \ ^t(n_1,n_2,n_3,n_4) = r_{3,3}.
\end{equation}
Then we go to the next step of the algorithm. Suppose that instead we find a (finite) set $C$ of quadruplets $(n_1,n_2,n_3,n_4)\in E_3\setminus \{(0, 0,\pm 1,0)\}$ with
\begin{equation}\label{condizione 24}
(n_1,n_2,n_3,n_4) R  \ ^t(n_1,n_2,n_3,n_4) = r_{3,3}.
\end{equation}
We use again Definition \ref{tameness}, Proposition \ref{baseunica} and condition B2)$'$:  
if,  for all elements $(n_1,n_2,n_3,n_4)\in C$, $(n_3,n_4)$ have  common divisors,  we go to the next step of the algorithm. Otherwise we conclude that the basis $\mathcal{B}$ is not tame, and stop. 

\item[Step 4:] In the last step our procedure is applied to the quadratic equation
$$(n_1,n_2,n_3,n_4) R  \ ^t(n_1,n_2,n_3,n_4)=r_{4,4}.$$
Let 
$$I_4=\left[ -\frac{|v_4|}{\lambda_1}, \frac{|v_4|}{\lambda_1}\right]$$ 
and set $E_4$ to be the finite set $\mathbb{Z}^4 \cap I_4^4$. 
Suppose there exists  no quadruplet $(n_1,n_2,n_3,n_4)\in E_4\setminus \{(0, 0, 0, \pm 1)\}$ with
\begin{equation}\label{condizione 25}
(n_1,n_2,n_3,n_4) R  \ ^t(n_1,n_2,n_3,n_4) = r_{4,4}.
\end{equation}
Then we conclude that $\mathcal{B}$ is tame. Suppose that instead we find a (finite) set $D$ of quadruplets $(n_1,n_2,n_3,n_4)\in E_4\setminus \{(0, 0,0, \pm 1)\}$ with
\begin{equation}\label{condizione 26}
(n_1,n_2,n_3,n_4) R  \ ^t(n_1,n_2,n_3,n_4) = r_{4,4}.
\end{equation}
We use again Definition \ref{tameness}, Proposition \ref{baseunica} and condition B2)$'$: if,  for all elements  $(n_1,n_2,n_3,n_4)\in D$, $n_4=\pm 1$, then the basis $\mathcal{B}$ is not tame. Otherwise we conclude that the basis $\mathcal{B}$ is tame. 
\end{itemize}
\vskip .5cm
\vskip .5cm

To conclude, we present an explicit example of special tame lattice (i.e. a lattice whose reduced Gram matrix belongs to $\mathring{\mathcal R}$). \\
Let $\{ 1, I, J, K \}$ be the standard basis for the skew field of quaternions.

\begin{example} The lattice $L$ generated by the special basis
\begin{equation*} 
\mathcal{B} = \{1, 2I+\frac{1}{10}, 3J+\frac{1}{100}, 4K+\frac{1}{1000} \}
\end{equation*}
is a tame lattice. The proof follows by a direct application of the algorithm. 
\end{example}

\bibliographystyle{amsplain}

\begin{thebibliography}{Co-MAc}
\bibitem{AB18} D. Angella, C. Bisi {\em Slice-quaternionic Hopf surfaces}. To appear in J. Geom. Anal. (2018).
\bibitem{Artin} M. Artin {\em Algebra}. Prentice Hall, Inc., Englewood Cliffs, NJ, 1991. 
\bibitem{BG} C. Bisi, G. Gentili {\em M\"obius transformations and the Poincar\'e 
distance in the quaternionic setting}. Indiana Univ.
Math. J.  {\bf 58}, (2009), no. 6, 2729-2764.
\bibitem{BG2} C. Bisi, G. Gentili
{\em On the geometry of the quaternionic unit disc}.
Sabadini, Irene (ed.) et al., Hypercomplex analysis and applications. Basel: Birkh$\ddot{a}$user, Trends in Mathematics, vol. {\bf 53}, 2011, 1-11.
\bibitem{BS} C. Bisi, C. Stoppato {\em The Schwarz-Pick lemma for slice regular functions}. Indiana Univ. Math. J. {\bf 61}, (2012), no. 1, 297-317.
\bibitem{BS2} C. Bisi, C. Stoppato {\em Regular vs. classical M$\ddot{o}$bius transformations of the quaternionic unit ball}. Advances in hypercomplex analysis, ed. by G. Gentili, I. Sabadini, M. V. Shapiro, F. Sommen, D. C. Struppa, Springer INdAM Series, {\bf 1}, Springer, Milan, 2013, 1-13.
\bibitem{BS17} C. Bisi, C. Stoppato {\em Landau's theorem for slice regular functions on the quaternionic unit ball}. International Journal of Mathematics, {\bf 28}, n.3, (2017), 1750017, (21 pages).
\bibitem{bredon} G. E. Bredon {\em Topology and geometry}. Graduate Texts in Mathematics, {\bf 139}. Springer-Verlag, New York, 1997.
\bibitem{librofc} F. Colombo, I. Sabadini,  D.C. Struppa \emph{Noncommutative Functional Calculus (Theory and Applications of 
Slice-Hyperholomorphic Functions)}. Progress in Mathematics series, {\bf 289}, Birkh$\ddot{a}$user Basel, 2011.
\bibitem{conway} J. H. Conway, D. A. Smith {\em On Quaternions and Octonions: Their Geometry, Arithmetic, and Symmetry}. A.K.Peters, Natick, Massachusetts, 2003.
\bibitem{Cox} H.S.M. Coxeter {\em Regular Complex Polytopes}. Cambridge University Press, 1973. 
\bibitem{BohrDGS}C. Della Rocchetta, G. Gentili, G. Sarfatti {\em The Bohr Theorem for slice regular functions} Math. Nachr. {\bf 285} (2012), 2093-2105.
\bibitem{BlochDGS}C. Della Rocchetta, G. Gentili, G. Sarfatti {\em A Bloch-Landau Theorem for slice regular functions}, in Advances in Hypercomplex Analysis, ed. by G. Gentili, I. Sabadini, M. V. Shapiro, F. Sommen, D. C. Struppa, 
Springer INdAM Series, Springer, Milan, 2013, 55-74.
\bibitem{VV} J. P. D\'iaz, A. Verjovsky, F. Vlacci {\em Quaternionic Kleinian modular groups and arithmetic hyperbolic orbifolds over the quaternions}. arXiv:1503.07214.
\bibitem{DuV} P. Du Val {\em Homographies, Quaternions and Rotations}. Oxford Mathematical Monographs, 1964.  
\bibitem{FK} H.M. Farkas, I. Kra {\em Theta constants, Riemann surfaces and the modular group. 
An introduction with applications to uniformization theorems, partition  identities and combinatorial number 
theory}. Graduate Studies in Mathematics, {\bf 37}. American Mathematical Society, Providence, RI, 2001.
\bibitem{GenSalSto} G. Gentili, S. Salamon, C. Stoppato \emph{Twistor transforms of quaternionic functions and orthogonal complex structures}. J. Eur. Math. Soc. (JEMS), {\bf 16}, no. 11 (2014), 2323-2353.


\bibitem{libro}G. Gentili, C. Stoppato, D. C. Struppa {\em Regular functions of a quaternionic variable}. Springer Monographs in Mathematics, Springer, Berlin-Heidelberg, 2013.
\bibitem{GS06} G. Gentili, D.C. Struppa {\em A new approach to Cullen-regular functions of a quaternionic variable}.
C. R. Acad. Sci. Paris, Ser. I, {\bf 342} (2006), 741-744.
\bibitem{GS} G. Gentili, D.C. Struppa {\em A new theory of regular
functions of a quaternionic variable}. Adv. Math., {\bf 216} (2007),
279-301.


\bibitem{GPSlice} R. Ghiloni, A. Perotti {\em Slice regular functions on real alternative algebras}. Adv. Math. {\bf 226}, no. 2 (2011), 1662-1691. 

\bibitem{GPseveral} R. Ghiloni, A. Perotti {\em Slice regular functions of several Clifford variables}.
AIP Conference Proceedings {\bf 1493}, 734-738 (2012); doi: 10.1063/1.4765569.

\bibitem{GPS} R. Ghiloni, A. Perotti, C. Stoppato {\em The Algebra of slice funcions}.\\
Trans. Amer. Math. Soc., (2016), electronically published on 28 November
2016. \\ DOI: http://dx.doi.org/10.1090/tran/6816
\bibitem{igusa} J-i. Igusa {\em Theta functions}. Springer-Verlag,
Berlin Heidelberg New York, 1971.
\bibitem{LT} G. I. Lehrer, D. E. Taylor
{\em Unitary Reflection Groups}.
Australian Mathematical Society Lecture Series {\bf 20}, Cambridge University Press, 2009. 
\bibitem{maas} H. Maass {\em Siegel's modular forms and the Dirichelet
series}. Lecture Notes in Mathematics, {\bf 216}, Springer-Verlag, Springer-Verlag,
Berlin Heidelberg New York, 1971.
\bibitem{Milne} J.S. Milne {\em Modular Functions and Modular Forms}. Course Notes 1990.
\bibitem{salamon} S. Salamon {\em Quaternionic K$\ddot{a}$hler manifolds}. Invent. Math. {\bf 67}  (1982), 143-171.
\bibitem{St} I.N. Stewart, D.O. Tall {\em Algebraic Number theory}.
Chapman and Hall, London, 1979.

\bibitem{stoppato} C. Stoppato {\em Regular Moebius transformations of
the space of quaternions}. Ann. Global Anal. Geom. {\bf 39}  (2011), 387-401.




\bibitem{Ves} E. Vesentini {\em Capitoli scelti della teoria delle
funzioni olomorfe}. Unione Matematica Italiana, 1984. 
\end{thebibliography}

\end{document}